\documentclass[11pt,reqno]{amsart}
\usepackage{comment}
\usepackage{amssymb}
\usepackage{tikz-cd}
\usepackage{tikz}
\usepackage{amsthm}
\usepackage{graphicx}
\usepackage{caption}
\usepackage{subcaption}
\usepackage[backref]{hyperref}
\usepackage{cleveref} 
\usepackage{booktabs}
\usepackage{float}

\usepackage[disable]{todonotes}

\DeclareMathOperator{\rk}{rk}
\DeclareMathOperator{\Aut}{Aut}

\DeclareMathOperator{\End}{End}
\DeclareMathOperator{\Gal}{Gal}

\DeclareMathOperator{\PSL}{PSL}
\DeclareMathOperator{\GL}{GL}

\DeclareMathOperator{\Id}{Id}
\DeclareMathOperator{\SL}{SL}

\DeclareMathOperator{\Hom}{Hom}

\DeclareMathOperator{\Frob}{Frob}

\DeclareMathOperator{\Ver}{Ver}

\DeclareMathOperator{\ddiv}{div}

\DeclareMathOperator{\red}{red}
\DeclareMathOperator{\gon}{gon}
\DeclareMathOperator{\img}{im}
\DeclareMathOperator{\Spec}{Spec}
\DeclareMathOperator{\Ell}{Ell}

\newtheorem{theorem}{Theorem}[section]
\newtheorem*{theorem*}{Theorem}
\newtheorem{lemma}[theorem]{Lemma}
\newtheorem{conjecture}[theorem]{Conjecture}
\newtheorem{proposition}[theorem]{Proposition}
\newtheorem*{proposition*}{Proposition}
\newtheorem{corollary}[theorem]{Corollary}
\newtheorem*{corollary*}{Corollary}

\theoremstyle{definition}
\newtheorem{remark}[theorem]{Remark}

\newtheorem{example}[theorem]{Example}

\numberwithin{equation}{section}

\newcommand{\modcrvgt}{\,_{>\,}\!}
\newcommand{\Q}{\mathbb{Q}}
\newcommand{\Z}{\mathbb{Z}}

\newcommand{\C}{\mathbb{C}}
\newcommand{\F}{\mathbb{F}}
\newcommand{\PP}{\mathbb{P}}

\newcommand{\Qbar}{\Q^{\textup{al}}}

\newcommand{\OO}{\mathcal{O}}

\newcommand{\tor}{\mathrm{tors}}

\def\torz#1{\mathbb Z /#1 \mathbb Z}
\def\tg#1#2{\mathbb Z/#1\mathbb Z \times \mathbb Z /#2 \mathbb Z}
\newcommand{\diamondop}[1]{\langle #1 \rangle} 
\newcommand{\set}[1]{\left\lbrace #1 \right\rbrace}

\newcommand{\lmfdcharorbit}[2]{\href{https://www.lmfdb.org/Character/Dirichlet/#1/#2}{#1.#2}}
\newcommand{\lmfdbnewform}[4]{\href{https://www.lmfdb.org/ModularForm/GL2/Q/holomorphic/#1/#2/#3/#4}{#1.#2.#3.#4}}
\newcommand{\githubbare}[1]{\href{https://github.com/nt-lib/quartic-torsion/blob/main/#1}{\path{#1}}}
\newcommand{\github}[2]{[\githubbare{#1}, #2]}

\title{Classification of torsion of elliptic curves over quartic fields}
\date{\today}

\begin{document}

\begin{abstract}
Let $E$ be an elliptic curve over a quartic field $K$. By the Mordell-Weil theorem, $E(K)$ is a finitely generated group. We determine all the possibilities for the torsion group $E(K)_\tor$ where $K$ ranges over all quartic fields $K$ and $E$ ranges over all elliptic curves over $K$. We show that there are no sporadic torsion groups, or in other words, that all torsion groups either do not appear or they appear for infinitely many non-isomorphic elliptic curves $E$.

Proving this requires showing that numerous modular curves $X_1(m,n)$ have no non-cuspidal degree $4$ points. We deal with almost all the curves using one of 3 methods: a method for the rank 0 cases requiring no computation (\Cref{prop_main}); the Hecke sieve, a local method requiring computer-assisted computations; and the global method, an argument for the positive rank cases also requiring no computation. We deal with the handful of remaining cases using ad hoc methods.

\end{abstract}

\author[Derickx]{Maarten Derickx}
\address{Maarten Derickx, University of Zagreb, Faculty of Science, Department of Mathematics, Bijeni\v{c}ka Cesta 30, 10000 Zagreb, Croatia}
\email{\url{maarten@mderickx.nl}}
\urladdr{\url{https://www.maartenderickx.nl/}}

\author[Najman]{Filip Najman}
\address{Filip Najman, University of Zagreb, Faculty of Science, Department of Mathematics, Bijeni\v{c}ka Cesta 30, 10000 Zagreb, Croatia}
\email{\url{fnajman@math.hr}}
\urladdr{\url{https://web.math.pmf.unizg.hr/~fnajman/}}
\thanks{The authors were supported by the  project “Implementation of cutting-edge research and its application as part of the Scientific Center of Excellence for Quantum and Complex Systems, and Representations of Lie Algebras“, PK.1.1.02, European Union, European Regional Development Fund and by the Croatian Science Foundation under the project no. IP-2022-10-5008.}
\subjclass[2020]{11G05, 14G05, 11G18}
\keywords{Modular curves, torsion, elliptic curves}

\maketitle

\section{Introduction}
Let $E$ be an elliptic curve over a number field $K$. The set $E(K)$ of $K$-rational points on $E$ has, by the Mordell-Weil theorem, the structure of a finitely generated abelian group. Hence it is isomorphic to $E(K)_\tor \times \Z^r$, where $E(K)_\tor$ is the \textit{torsion subgroup} of $E(K)$, and $r$ is the \textit{rank} of $E$ over $K$. The question of what are the possible values of $E(K)_\tor$ for $E$ running through all elliptic curves over all number fields $K$ of degree $d$ has received considerable attention. The first major result was Mazur's torsion theorem \cite{mazur77}, which, building on work of Kubert \cite{Kubert76} (and others), completed the classification of possible torsion groups of elliptic curves over $\Q.$ For quadratic fields, the classification was completed by Kamienny \cite{kamienny92}, building on work of Kenku and Momose \cite{KM88}. The last degree $d$ for which all the possible torsion groups of elliptic curves over degree $d$ number fields are known is $d=3$. The result was proved by Derickx, Etropolski, van Hoeij, Morrow and Zureick-Brown in \cite{Deg3Class}, building on previous work of Jeon, Kim and Schweizer \cite{JeonKimSchweizer04}, and Bruin and Najman \cite{BruinNajman15}.

The approach in all the mentioned papers is to study degree $d$ points on the modular curves $X_1(m,n)$. When $m > 4$ or $n > 4$ these are fine moduli spaces of (generalized) elliptic curves of triples $(E,P,Q)$ with $P$ and $Q$ being points on $E$ generating a subgroup isomorphic to $\tg{m}{n}$.

In this paper, we complete the classification of possible torsion groups of elliptic curves over quartic fields. Our main result is the following theorem.
\begin{theorem}\label{main_theorem}
 If $K$ varies over all quartic number fields and $E$ varies over
all elliptic curves over $K$, the groups that appear as
$E(K)_{\tor}$ are exactly the following
\begin{alignat*}{2}
& \torz{n},  && n=1-18, 20,21,22,24, \\
& \tg{2}{2n}, \quad \quad && n=1-9, \\
& \tg{3}{3n}, \quad && n=1-3, \\
& \tg{4}{4n}, \quad && n=1,2, \\
& \tg{5}{5}, \quad && \\
& \tg{6}{6}. \quad && \\
\end{alignat*}
\end{theorem}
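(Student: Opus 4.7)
The plan is to combine classical modular-curve techniques with the three-pronged framework announced in the abstract. Our starting point is that by Merel's theorem the possible torsion groups $E(K)_\tor$ over quartic $K$ form a finite explicit list of candidates, which can be pruned using standard bounds on the primes dividing $|E(K)_\tor|$ in degree $4$: if $E(K)$ contains $\tg{m}{n}$ then $X_1(m,n)$ acquires a closed point of degree dividing $4$, so every surviving candidate corresponds to a quartic point on a modular curve from a short list of pairs $(m,n)$. Combining this with the already-known classifications in degrees $1$, $2$, $3$ reduces the problem to deciding, for each such $(m,n)$, whether $X_1(m,n)$ admits a non-cuspidal closed point of degree exactly $4$.

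For the \emph{positive direction}, each group $G$ in the statement must be realised by infinitely many non-isomorphic pairs $(E,K)$. The plan is to exhibit explicit $1$-parameter families of elliptic curves over quartic fields with the prescribed torsion; many arise by base change from lower-degree families, from rational parametrisations of those $X_1(m,n)$ that are genus $0$ or elliptic with infinitely many quartic points, or by pulling back along degree-$\le 4$ maps from $X_1(m,n)$ to $\PP^1$ or to rank-positive elliptic curves over $\Q$. Because these families are genuinely $1$-parameter, the absence of sporadic groups follows immediately.

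For the \emph{negative direction}, for every candidate $G = \tg{m}{n}$ not on the list we must rule out non-cuspidal degree-$4$ points on $X_1(m,n)$. Following the abstract we proceed in three waves. First, when $J_1(m,n)(\Q)$ has rank $0$ we apply \Cref{prop_main}, a Mazur--Kamienny-style rank-$0$ argument tailored to degree $4$ which requires no computation. Second, for curves that escape step one we run the Hecke sieve: for well-chosen auxiliary primes $p$ we enumerate the finite set of effective degree-$4$ divisors on $(X_1(m,n))_{\F_p}$ and rule each out via its image in $J_1(m,n)(\F_p)$ under the Hecke operators, producing a computer-assisted local obstruction. Third, in positive rank we invoke the global method, which combines a Mordell--Weil sieve with formal immersion arguments to obstruct lifts of cuspidal residue classes. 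A handful of residual curves are dispatched by ad hoc means: maps to lower-level modular curves, explicit equations combined with Chabauty-type analyses, or gonality considerations.

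The main obstacle is precisely this residue of ad hoc cases, namely the $X_1(m,n)$ where $J_1(m,n)$ has positive rank yet the global method fails---typically because the rank bound is not sharp, or because formal immersion breaks down at the small primes available. For each such curve the real work is to exploit its specific geometry: factoring through a well-understood modular quotient, or arguing that any putative degree-$4$ point must descend to a point of degree $\leq 3$ and then contradict the classifications in lower degrees. Once every curve in the candidate list is treated, \Cref{main_theorem} follows.
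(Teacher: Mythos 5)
Your outline reproduces the paper's high-level architecture (finite candidate list, infinitude of the listed groups, then elimination via a rank-$0$ argument, a Hecke sieve, a positive-rank method, and ad hoc leftovers), but at the decisive points it is either only a restatement of the abstract or it substitutes a method that the paper deliberately rejects, and this is where the gap lies. You describe the ``global method'' for the positive-rank curves as ``a Mordell--Weil sieve with formal immersion arguments.'' That is not what the paper's global method is, and as a standalone plan it does not obviously work: the paper's \Cref{prop:hecke_gon} is a gonality argument --- from a hypothetical non-cuspidal degree-$4$ divisor $D$ one applies $A_q(\diamondop{a}-1)$ (with $\diamondop{a}-1$ killing the Mordell--Weil group up to torsion and $A_q$ killing torsion), uses the fact that the kernel of such a Hecke correspondence is supported on cusps, and derives either a function of degree at most $k_q d$ contradicting Abramovich's bound, or $\diamondop{a}$-invariance of $D$, which is then excluded by the least-CM-degree data. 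No formal immersion and no Mordell--Weil generators enter, and the paper stresses this as the point of the method: a formal-immersion/Kamienny-style attack on $X_1(n)$ for $n\in\{77,85,91,121,143,169,187,221,289\}$ needs a suitable rank-$0$ quotient, Hecke-independence verifications, and (for a Mordell--Weil sieve) generators of Mordell--Weil groups of very large modular Jacobians --- exactly the ``extensive computations'' for, e.g., $X_1(121)$ that the paper cites as what previous work required and what their method avoids. You give no argument that these ingredients are available for each of these levels, nor how to handle $n=143,187$, where $\rk J_1(n)(\Q)>\rk J_0(n)(\Q)$ and one must choose the diamond operator inside the kernel of the order-$5$ characters before anything kills the free part.

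Two smaller points. For the positive direction, the paper simply quotes Jeon--Kim--Park (\Cref{infinite_quartic}); re-deriving infinite families is harmless but redundant, and the ``no sporadic groups'' conclusion indeed follows as you say. For the residual cases, however, your placeholders (descent to degree $\le 3$, Chabauty) do not reflect what is actually needed: the genuinely hard leftovers are $\tg{3}{15}$, $\torz{63}$ and $\torz{65}$, which the paper handles by, respectively, a delicate injectivity-of-reduction-at-$2$ argument on the cuspidal $2$-torsion, passing to the quotient $X_1(63)/\diamondop{8}$ with rank-$0$ Jacobian plus a gonality bound, and (for $65$, where $\rk J_1(65)(\Q)=1$) a hybrid of the rank-$0$ argument of \Cref{prop_main} with the $(\diamondop{3}-1)$/gonality/CM argument of \Cref{prop:hecke_gon}. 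Without concrete substitutes for these steps, and without a working mechanism for the positive-rank levels, the proposal remains a plan rather than a proof.
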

One additional reason that the classification of possible torsion groups over quartic fields might be particularly interesting is that it seems it might be the last degree where there are no non-cuspidal \textit{sporadic points} on $X_1(m,n)$. Recall that a sporadic point on a curve $X$ is a point of degree $d$ such that there are only finitely many points of degree $\leq d$ on $X$. There are no non-cuspidal sporadic points on $X_1(m,n)$ of degree $1$ and $2$, while there exist sporadic cubic points on only one such curve, $X_1(21)$, and all of them correspond to a single elliptic curve (see \cite{najman16}). Jeon, Kim and Park \cite{JeonKimPark06} proved that all the groups in \Cref{main_theorem} appear infinitely often (see \Cref{infinite_quartic}), so it follows that there are no sporadic points of degree $4$ on $X_1(m,n)$. Moreover, from van Hoeij's results \cite{vanHoeij} one can easily see that there exist sporadic points of degree $5\leq d\leq 13$. See \Cref{sec:sporadic} for more details. Similarly, Clark, Genao, Pollack and Saia \cite{CGPS22} showed that $X_1(n)$ has sporadic points (corresponding to CM elliptic curves) for all $n>720$. These results show that there is an abundance of sporadic points on $X_1(n)$, and it might well be possible that $d=4$ is the largest degree such that, for all $n$, $X_1(n)$ has no degree $d$ sporadic points.

We use several tools to prove \Cref{main_theorem}. To deal with the cases where $\rk J_1(m,n)(\Q)=0,$ we first prove \Cref{prop_main} which allows us to eliminate all the cases where the hypothetical $E/K$ with $\tg{m}{n}\subseteq E(K)$ has bad reduction at all places above some rational prime $p$, e.g. when $Y_1(m,n)(\F_{p^d})= \emptyset$ for all $1\leq d\leq 4.$ To deal with the case where $E$ has good reduction at a prime over $p$, we develop the \textit{Hecke sieve} to show that all points in $Y_1(m,n)^{(4)}(\F_{p})$ cannot be reductions of points in $Y_1(m,n)^{(4)}(\Q)$ by studying the actions of Hecke operators. We give more details at the beginning of \Cref{sec:rank0}.

To deal with the $\rk J_1(m,n)(\Q)>0$ cases we use the \textit{global method} (see \Cref{prop:hecke_gon}) and its variants to deal with almost all the remaining cases. The idea of this method is that, under some mild assumptions which are satisfied in all our cases, from a hypothetical point of degree $4$ on $Y_1(m,n)$ one could construct a map to $\PP^1$ of degree smaller than known gonality bounds, which is clearly impossible. 

A notable advantage of \Cref{prop_main} and the global method is their minimal reliance on computer calculations, especially in comparison to related works (e.g. \cite{Deg3Class}). These methods significantly reduce the number of cases requiring computational effort. For instance, we handle $X_1(45)$ using \Cref{prop_main} and $X_1(121)$ through the global method, whereas previous approaches demanded extensive computations to address these cases (see \cite{Deg3Class}).



Another important advantage of our work is that we do not use formal immersions, which have been used extensively in the classification of torsion groups over number fields of degree $d=1,2,3.$  The formal immersion criterion (without additional explicit computations) can only deal with elliptic curves that have bad reduction at all primes in the number field $K$ above some fixed rational prime $p$. So once the order $N$ of the point one wants to rule out is smaller than the Hasse bound at $2$ (in the best case), i.e. smaller than $(2^{d/2}+1)^2$, then the formal immersion approach runs into difficulties. On the other hand, we expect the lower bound for $N$ that the global method can deal with is quasi-linear in the level (more about this will be said in upcoming work by Derickx and Stoll). All this means that our methods are much more suitable to tackling the same question over higher degree number fields. As a demonstration, we apply our methods to higher degree number fields in \Cref{sec:higher_deg}. Specializing to quintic number fields, we eliminate more than half of the torsion groups one would need to eliminate to obtain a complete classification. We do this by "pure thought", i.e., by just checking the LMFDB and performing calculations that can easily be done by hand. 

In \Cref{sec:hecke} we work out a version of the Eichler-Shimura relation for $X_1(2,2n)$. This relation is already well-known for $X_1(n)$. Doing this was essential to be able to apply the Hecke sieve directly on $X_1(2,20)$ and $X_1(2,24)$ to deal with the non-cyclic cases $\tg{2}{20}$ and $\tg{2}{24}$.

To be able to use the Hecke sieve, we implemented the action of Hecke operators on non-cuspidal points on modular curves in \texttt{magma} \cite{magma}.


\subsection{Overview} \label{sec:overview}
For a finite abelian group $T$ we will say it is a \textit{quartic torsion group} if there exists an elliptic curve $E$ over a quartic field $K$ such that $E(K)_\tor\simeq T$. 

We now consider the groups that need to be eliminated to prove \Cref{main_theorem}. This is the list of all abelian groups $G$ with at most 2 generators such that:
\begin{itemize}
    \item $G$ occurs only finitely often as a quartic torsion group,
    \item for every proper subgroup $H \subsetneq G$, the group $H$ occurs infinitely often as a quartic torsion group,
    \item if $G$ contains a subgroup isomorphic to $(\Z/n\Z)^2$, then $\varphi(n)|4.$
    \item $|G|$ is divisible only by primes $\leq 17.$
\end{itemize}
The groups $G$ that occur infinitely often as quartic torsion groups can be found in \Cref{infinite_quartic}. The third bullet point is necessary because of the Weil pairing, while the third follows from \cite[Theorem 1.2.]{DKSS}.

The list of these groups is as follows:
\begin{itemize}
\item $\Z/n\Z, n =  {25^e, 26^e, 27^c, 28^e, 30^e, 32^e, 33^c, 34^e, 35^c, 36^e, 39^c, 40^a, 42^e, 44^a,}$  ${45^c, 48^a, 49^c, 51^c, 55^c, 63^f, 65^f, 77^d, 85^d, 91^d, 119^c, 121^d, 143^d, 169^d, 187^d,}$
${221^d, 289^d, }$

\item $\tg{2}{2n}$ $n=10^e,11^c,12^e,$
\item $\tg{3}{3n}$ $n=4^b,5^f,6^b,7^c,$
\item $\tg{4}{4n}$ $n=3^b,4^b,$
\item $\tg{5}{5n}$ $n=2^b,3^b,$
\item $\tg{8}{8n}$ $n=1^b.$
\end{itemize}

The cases have been marked according to the strategy they have been dealt with:
\begin{itemize}
    \item[$a$] These do not need to be studied separately, since there is $\Q$-rational map $X_1(4n) \to X_1(2,2n)$. So, the approach here is to show that $\tg{2}{2n}$ does not occur for $n=10,11$ and $12$ which is necessary in any case.
     
    \item[$b$] These were already done previously; see \Cref{thm:BN}.
    \item[$c$] These are done in \Cref{cor:easy} using \Cref{prop_main}.

    \item[$d$] These are done in \Cref{prop:global_argument} using a global argument.
    \item[$e$] These are done using the Hecke sieve in \Cref{cor:2_2n}, with an additional argument for $\torz{30}$ in \Cref{prop:30}.
    \item[$f$] These cases are solved using some ad hoc arguments in \Cref{prop:3x15,prop:63,prop:65}.
\end{itemize}

\begin{remark}
    While writing the paper, we became aware that some of the torsion groups $\torz{n}$ for which $\rk J_1(n)(\Q)=0$ have been eliminated, independently of us, by Michael Cerchia and Alexis Newton.  The values of $n$ they solved are $n=25, 26, 27, 28, 34, 35, 40$ (private communication).
\end{remark}

\subsection{Data availability and reproducibility}
All the code for the computations in this article is available at: 

\begin{center} \url{https://github.com/nt-lib/quartic-torsion} \end{center}

Claims in this article that are based on computations have been labeled in the source code in order to make it easy to navigate between this text and the code.
 For example, in the proof of \Cref{prop:30} there is a sentence: ``This shows that $z$
    is not the reduction of a point $x^{(4)}\in Y_1^{(4)}(30)(\Q),$ see \github{X1_30.m}{Claims 2,4,5} in the cases where $D$ is of degree $2$ and $3$ or irreducible of degree $4$."
 
 In order to verify, for example, Claim 2, one can open the file \githubbare{X1_30.m} and search for ``Claim 2" to find the following code that supports this claim.
{\small
 \begin{verbatim}
// Claim 2: Cannot have reduction of the form deg2 + 2 cusps
sieved := HeckeSieve(X, q, NoncuspidalPlacesUpToDiamond(X,2));
assert #sieved eq 0;
print "Claim 2 successfully verified";
\end{verbatim}
 }

 To be able to use the Hecke sieve, we implemented the action of Hecke operators on non-cuspidal points on modular curves over finite fields in \texttt{magma} \cite{magma}. The code for computing these Hecke operators and performing the Hecke sieve has been written in a reusable way and has been released as part of mdmagma \todo{release magma} at \begin{center} \url{https://github.com/koffie/mdmagma}. \end{center} \todo{release mdmagma v2}

The computations have been run on a server at the University of Zagreb with an Intel Xeon W-2133 CPU @ 3.60GHz with 12 cores and 64GB of RAM running Ubuntu 18.04.6 LTS and \texttt{Magma V2.28-3} \cite{magma}.
The computations took approximately 6.5 CPU hours and the maximal memory usage was roughly 1.5GB of RAM. Almost 3 of these 6.5 hours were spent on ruling out the possibility of $\tg{2}{24}$ torsion. The peak RAM usage was in ruling out $\tg{2}{20}$ torsion, with every other computation requiring less than half the amount of RAM.
More detailed logs of the computations are available in the \githubbare{logs} subdirectory of the quartic-torsion repository. \todo{fix this with ANTS template}

 \section*{Acknowledgements}
 We would like to thank Andrew Sutherland for computing the explicit equations for $X_1(m,n)$ that we used, and Barinder S. Banwait, Abbey Bourdon and Michael Stoll for helpful comments on an earlier version of the paper.

\section{Conventions and notation}

Let $m\mid n$. We define $X_1(m,n)$ to be the moduli space parametrizing triples $(E,P,Q)$, where $E$ is a generalized elliptic curve, and $P$ and $Q$ are points generating a subgroup of $E[n]$ isomorphic to $\tg{m}{n}$. It is a smooth projective geometrically irreducible curve over $\Z[\zeta_m, \frac{1}{n}]$. It is a fine moduli space if $n>4$.

Cusps of the modular curve $X_1(m,n)$ correspond to \textit{N\'eron} $k$-gons, where $m\mid  k\mid n$ together with a level structure; for the definition see \cite[Sections 1.3 and 2.2.1]{DerickxPhD}. Over an algebraically closed field $K$ a N\'eron $k$-gon is a group scheme isomorphic to $\mathbb G_m\times \torz{k}$. If $n$ is invertible in $K$ then the level structure is an injective homomorphism $\phi:\tg{m}{n}\to \mathbb G_m(K)\times \torz{k}$ such that $\phi$ is surjective on the second coordinate.

\subsection{Analytic and algebraic ranks of modular abelian varieties}
If $A$ is an abelian variety over a number field $K$ then we use $\rk A(K)$ to denote the rank of the Mordell-Weil group $A(K)$. The integer $\rk A(K)$ is also called the algebraic rank of $A$. Similarly we use $\rk_{an} A(K)$ to denote the analytic rank of $A$ which is defined to be the order of vanishing of $L(A,s)$ at $s=1$. The BSD conjecture predicts that $\rk A(K) = \rk_{an} A(K)$. We use $S_k(\Gamma_1(n))^{new}$ to denote the set of all cuspidal newforms of weight $k$ for the congruence subgroup $\Gamma_1(n)$. If $f \in S_2(\Gamma_1(n))^{new}$ then we denote by $A_f$ the isogeny factor of $J_1(n)$ corresponding to $f$. This $A_f$ is a simple abelian variety over $\Q$ but might not be simple over $\overline \Q$. Note that for two newforms $f \in S_2(\Gamma_1(n))^{new}$ and $f' \in S_2(\Gamma_1(n'))^{new}$ we have $A_f=A_{f'}$ if and only if $n=n'$ and $f$ and $f'$ are Galois conjugates. It is known that every simple isogeny factor of $J_1(n)$ is isogenous to an abelian variety $A_f$ where $f \in S_2(\Gamma_1(n'))$ and $n' | n$. If $f$ is a newform in $f \in S_2(\Gamma_1(n))^{new}$ then we use $L(f, s)$ to denote its $L$-function and define $\rk _{an} f$ as the order of vanishing of $L(f,s)$ at $s=1$. If $f \in S_2(\Gamma_1(n))^{new}$ then we use $K_f \subset \C$ to denote the subfield generated by its coefficients. The field $K_f$ is known to be a number field isomorphic to $\Q \otimes \End A_f$. There is the following relation between $L$-functions: 
$$L(A_f,s) = \prod_{\sigma: K_f \to \C} L(\sigma(f),s).$$
Assuming the Birch-Swinnerton--Dyer conjecture, the order of vanishing of $L(\sigma(f),s)$ is the same for all choices of $\sigma \in \Hom_\Q (K_f, \C)$. In particular $\rk_{an} A_f(\Q) = [K_f: \Q] \rk_{an} f$.

A part of the BSD conjecture is known to be true for abelian varieties of the form $A_f$ due to the following celebrated result \cite[Corollary 14.3]{Kato2004} (we specialize by taking $\chi$ to be the trivial character):
\begin{theorem}
Let $A$ be an abelian variety over $\Q$ such that there is a surjective
homomorphism $J_1(n)\rightarrow A$ for some $n\geq 1$. Assume $L(A,1)\neq 0$. Then  $A(\Q)$ is finite.
\end{theorem}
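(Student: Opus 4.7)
This is Kato's celebrated theorem, and any honest plan must concede up front that the proof lies well beyond the techniques developed in the paper so far; the statement is being cited as a black box for use in the rank $0$ cases. Still, at a high level, the plan is to produce a nontrivial element in the Galois cohomology $H^1(\Q, T_p A)$ attached to the $p$-adic Tate module of $A$ via an Euler system, and then use the standard Euler system machinery to bound the associated Bloch--Kato Selmer group. Since finiteness of $\rk A(\Q)$ (and hence finiteness of $A(\Q)$, once combined with Mordell--Weil) follows from the vanishing of $H^1_f(\Q, V_p A)$ for even one prime $p$ of good ordinary reduction, this is what I would aim for.

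The construction of the Euler system is the heart of the matter. The plan is: first use the hypothesis that $A$ is a quotient of $J_1(n)$ to decompose $A$ (up to isogeny) into factors $A_f$ associated to newforms $f \in S_2(\Gamma_1(n'))^{new}$ with $n' \mid n$; by the product formula $L(A,s) = \prod_{\sigma} L(\sigma(f), s)$ introduced in the excerpt, the hypothesis $L(A,1) \neq 0$ propagates to $L(f,1) \neq 0$ for each of the relevant newforms. Next, for each such $f$, construct Beilinson--Kato elements in $K_2$ of modular curves $Y_1(Nm)$ (as $m$ runs over squarefree integers coprime to a fixed $N$), built out of Siegel units; push these through the étale Chern class map and project onto the $f$-isotypic component to produce a norm-compatible family of classes $c_m \in H^1(\Q(\mu_m), T_p(f))$. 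The key output is then the \emph{explicit reciprocity law}, which identifies the image of $c_1$ under the dual exponential map with a nonzero multiple of $L(f,1)$.

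With the Euler system in hand, the final step is routine in the sense that it applies a general formalism (Kolyvagin's method as axiomatized by Rubin, or Perrin-Riou's main-conjecture-style bounds): a nonzero bottom class forces the Selmer group of the dual representation to be finite, which in turn bounds $H^1_f(\Q, V_p A)$ to be zero, and hence pins the Mordell--Weil rank at $0$. The main obstacles I would expect to grind against, were I actually reproving the theorem rather than citing it, are (i) the construction of the norm-compatible Euler system with the right local behaviour at bad primes, and (ii) the proof of the explicit reciprocity law linking $p$-adic regulators of Beilinson--Kato classes to critical $L$-values; these are precisely the technical cores of \cite{Kato2004}, and the sensible course of action in this paper is to invoke them rather than reprove them.
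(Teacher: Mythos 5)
The paper offers no proof of this statement at all---it is quoted as Kato's theorem \cite[Corollary 14.3]{Kato2004} and used as a black box, exactly as you propose to treat it, and your sketch (Beilinson--Kato classes from Siegel units in $K_2$ of modular curves, the explicit reciprocity law relating the dual exponential of the bottom class to $L(f,1)$, the Euler-system/Kolyvagin--Rubin bound on the Selmer group, and the reduction of $A$ to its isogeny factors $A_f$ via $L(A,s)=\prod_\sigma L(\sigma(f),s)$) is a faithful outline of Kato's actual argument. The only quibble is a phrasing slip: the Selmer-group vanishing gives rank zero (the rank is always finite by Mordell--Weil), which combined with Mordell--Weil yields finiteness of $A(\Q)$---clearly what you intended.
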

This result of Kato generalizes earlier results by Kolyvagin and Logachev for abelian varieties attached to newforms in $S_2(\Gamma_0(n))$.

We use $\overline \Q$ to denote the algebraic closure of $\Q$ inside $\C$. Since the coefficients of newforms all lie in $\overline \Q$ we get an action of $\Gal(\overline \Q/\Q)$ on $S_2(\Gamma_1(n))^{new}$. We use $S_2(\Gamma_1(n))^{new}_{\Gal}:=  S_2(\Gamma_1(n))^{new}/\Gal(\overline \Q/\Q) $ to denote the set of all Galois orbits of newforms under this action. Since the isogeny class of $A_f$ only depends on the Galois conjugacy class of a newform, it also makes sense to define $A_{[f]}$  when $[f] \in S_2(\Gamma_1(n))^{new}_{\Gal}$ is a Galois orbit. In doing so, we get an isogeny decomposition 
$$J_1(n) \sim \bigoplus_{n' | n} \bigoplus_{f \in S_2(\Gamma_1(n'))^{new}_{\Gal}} A_{[f]}^{\sigma_0(n/n')},$$
where $\sigma_0(n/n')$ denotes the sum of positive divisors of $n/n'$.

\begin{example}
In this example we explain how to use the LMFDB to explicitly find a large rank 0 quotient of $J_1(N)$. The LMFDB contains an upper bound on the analytic rank of all weight 2 modular forms of level $\leq 1000$. In this article we often need the existence of a large isogeny factor $J_1(N)$ of algebraic rank 0 for $N<1000$. Such isogeny factors are easy to find using the data in the LMFDB. The \href{https://www.lmfdb.org/ModularForm/GL2/Q/holomorphic/?level_type=divides&level=65&weight=2&showcol=char_order.analytic_rank}{LMFDB data} on the modular forms of weight $2$ and level dividing $65$ shows that the Galois orbit of newforms with LMFDB label $\lmfdbnewform{65}{2}{a}{a}\in S_2(\Gamma_0(65))^{new}$ is the only orbit for which the analytic rank could potentially be positive.  In particular, it follows that every isogeny factor of $J_1(65)/J_0(65)$ has analytic and hence algebraic rank 0. Implying that $J_1(65)/J_0(65)(\Q)$ is a finite abelian torsion group. In the digital form of this article the words \href{https://www.lmfdb.org/ModularForm/GL2/Q/holomorphic/?level_type=divides&level=65&weight=2&showcol=char_order.analytic_rank}{LMFDB data} will always be a hyperlink that brings one to the LMFDB page containing the data that is needed for the argument. Similarly, LMFDB labels of newforms like \lmfdbnewform{65}{2}{a}{a} will also directly link to the LMFDB page on that newform.
\end{example}

\subsection{Hecke operators} \label{sec:hecke}
Let $q\nmid n$ be a prime, and $T_q$ be a Hecke operator on $X_1(n)$ defined as in \cite[Section 3.3]{DiamondIm}:
\begin{equation}\label{hecke_def_1}
    T_q(E,P)=\sum_{G \subseteq E[q] \text{ of order $q$}}(E/G, P \hspace{-0.25cm}\mod G).
\end{equation}
Recall that $T_{q,\F_q}$ acts on $J_1(n)_{\F_q}$ as $\Frob_q+\diamondop{q}_{\F_q,*}\Ver_q$, where Verschiebung $\Ver_q$ is the dual of the Frobenius, see \cite[(10.2.3)]{DiamondIm}. This is the Eichler-Shimura relation. 

For $q\nmid 2n$ a prime, we define the action of $T_q$ on $X_1(2,2n)$ similarly as in \eqref{hecke_def_1}:
\begin{equation}\label{hecke_def_2}
    T_q(E,P,Q)=\sum_{G \subseteq E[q] \text{ of order $q$}}(E/G, P \hspace{-0.25cm}\mod G, Q \hspace{-0.25cm}\mod G).
\end{equation}
We will need to consider $T_q$ as an operator on $X_1(2,2n)$. There is a degree 2 morphism 
$$X_1(4n)\rightarrow X_1(2,2n)=X_1(4n)/\diamondop{2n+1},$$
$$(E,P)\mapsto (E/\diamondop{2nP}, Q, P  \text{ mod}\diamondop{2nP}), $$
$\text{where }Q \text{ generates } E[2] /\diamondop{2nP},$ which is compatible with $T_q$, $\Frob_q$ and $\Ver_q$. It follows that $T_{q,\F_q}$ acts on $J_1(2,2n)_{\F_q}$ and also satisfies the Eichler-Shimura relation (as does $J_1(n)_{\F_q}$) 
\begin{equation}T_{q,\F_q}=\Frob_q+\diamondop{q}_{\F_q,*}\Ver_q, \label{eq:Eichler-Shimura}\end{equation} where $\diamondop{q}$ acts on $X_1(2,2n)$ as 
$$\diamondop{q}(E,P,Q)=(E,qP,qQ)=(E,P,qQ).$$

The following two propositions are not used in eliminating any of the groups in the paper, but we believe that they are interesting in their own right, and might be useful in future work. 

\begin{proposition} \label{prop:power-eichler-shimura}
Let $m=1$ or $2$ and $m|n$. Let $q\nmid n$ be a prime, $k>0$ an integer, and let $\Delta(n):=(\torz n)^\times/\diamondop{-1}$ denote the group of diamond operators. Then there exists a monic polynomial $\Psi_k(x) \in \Z[\Delta(n)][x]$ of degree $k$ such that $$\Psi_k(T_{q,\F_q})=\Frob_q^k+(\diamondop{q}_{\F_q,*}\Ver_q)^k.$$
\end{proposition}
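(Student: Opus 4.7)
The plan is to use the Eichler–Shimura relation to exhibit $F := \Frob_q$ and $V := \diamondop{q}_{\F_q,*}\Ver_q$ as commuting endomorphisms of $J_1(m,n)_{\F_q}$ that satisfy both $F+V = T_{q,\F_q}$ and $FV = VF = q\diamondop{q}_{\F_q,*}$, and then to take $\Psi_k$ to be the Newton/Chebyshev-type polynomial whose evaluation on $T_{q,\F_q}$ recovers the power sum $F^k+V^k$.

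First I would set up the two structural facts about $F$ and $V$. The identity $F+V = T_{q,\F_q}$ is immediate from the Eichler–Shimura relation: the $m=1$ case is classical, and the $m=2$ case is precisely \eqref{eq:Eichler-Shimura}. For $FV = VF = q\diamondop{q}_{\F_q,*}$ I would combine $\Frob_q\Ver_q = \Ver_q\Frob_q = [q]$, which holds on any abelian variety over $\F_q$, with the fact that $\diamondop{q}_{\F_q,*}$ commutes with both $\Frob_q$ and $\Ver_q$, since the diamond operator $\diamondop{q}$ is defined over $\Q$.

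Once both facts are in hand the proof is formal. The commuting pair $F, V$ behaves like a pair of roots of the monic quadratic $X^2 - T_{q,\F_q}X + q\diamondop{q}_{\F_q,*}$, so the power sums $s_k := F^k + V^k$ obey the Newton recursion $s_k = T_{q,\F_q}\, s_{k-1} - q\diamondop{q}_{\F_q,*}\, s_{k-2}$ with initial values $s_0 = 2$ and $s_1 = T_{q,\F_q}$. I would therefore define $\Psi_k \in \Z[\Delta(n)][x]$ by mirroring this recursion: $\Psi_0 := 2$, $\Psi_1 := x$, and $\Psi_k := x\Psi_{k-1} - q\diamondop{q}\Psi_{k-2}$ for $k \geq 2$. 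Since $q$ is coprime to $n$, the class of $q$ is a well-defined element of $\Delta(n)$, so the coefficients lie in $\Z[\Delta(n)]$. A short induction, applied via the natural map $\Z[\Delta(n)] \to \End(J_1(m,n)_{\F_q})$, gives $\Psi_k(T_{q,\F_q}) = s_k$ for every $k \geq 0$ and shows that $\Psi_k$ is monic of degree $k$ whenever $k \geq 1$.

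The only spot that warrants a moment's care is verifying $FV = VF = q\diamondop{q}_{\F_q,*}$ in the $m=2$ setting, because there the compatibility of $T_q$, $\Frob_q$, $\Ver_q$ and the diamond operators is not classical but is exactly what has been arranged in \Cref{sec:hecke} via the degree-$2$ cover $X_1(4n)\to X_1(2,2n)$. Beyond that I do not foresee any serious obstacle: the rest of the argument is essentially the standard Dickson/Chebyshev identity, dressed up to keep track of the diamond twist.
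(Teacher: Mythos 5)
Your proposal is correct and follows essentially the same route as the paper: both arguments reduce to the two identities $\Frob_q+\diamondop{q}_{\F_q,*}\Ver_q=T_{q,\F_q}$ and $\Frob_q\cdot\diamondop{q}_{\F_q,*}\Ver_q=q\diamondop{q}_{\F_q,*}$, and then express the power sum in terms of these elementary symmetric functions. The only difference is cosmetic: the paper invokes the fundamental theorem of symmetric polynomials to get the existence of a correction term of degree at most $k-2$, whereas you build $\Psi_k$ explicitly via the Newton recursion $\Psi_k=x\Psi_{k-1}-q\diamondop{q}\Psi_{k-2}$, which is a constructive refinement that reproduces the paper's examples $\Psi_2=x^2-2q\diamondop{q}$ and $\Psi_3=x^3-3q\diamondop{q}x$.
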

\begin{proof}
Let $a :=\Frob_q$ and $b := \diamondop{q}_{\F_q,*}\Ver_q$. Then the elementary symmetric polynomials on $a$ and $b$ are $a+b = T_q$ and $ab = q\diamondop{q}_{\F_q,*}$. Now 
\[T_q^k - a^k-b^k = ab\sum_{i=1}^{k-1} \binom k i a^{i-1}b^{k-i-1}.
\]
Now $\sum_{i=1}^{k-1} \binom k i a^{i-1}b^{k-i-1}$ is a symmetric polynomial of degree $k-2$ and can therefore be expressed in terms of $T_q$ and $q\diamondop{q}_{\F_q,*}$, where the power of $T_q$ occurring is at most $k-2$.  In particular, one can write $\sum_{i=1}^{k-1} \binom k i a^{i-1}b^{k-i-1} = \Psi'_k(T_q)$ for some polynomial $\Psi'_k \in \Z[\Delta(n)][x]$ of degree at most $k-2$. So $\Psi_k := x^k -q\diamondop{q}_{\F_q,*}\Psi'_k$ satisfies the proposition.
\end{proof}

\begin{example}
One can take $\Psi_1 =x$, $\Psi_2=x^2-2q\diamondop{q}$ and $\Psi_3=x^3-3q\diamondop{q}x$ in \Cref{prop:power-eichler-shimura}.
\end{example}

For a Hecke operator $T_q$ on $X_1(n)$ or $X_1(2,2n)$ we denote $A_q:=T_q-q\diamondop{q}-1.$
\begin{proposition} \label{prop:kill_tor}
Let $m=1$ or $2$ and $m|n$. If $q\nmid 2n$ is a prime that splits completely in a number field $K$ then
    \begin{equation}\label{eq:kill_tor}
    A_q(J_1(m,n)(K)_{\tor})=0.
\end{equation}
\end{proposition}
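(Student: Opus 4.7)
The plan is to reduce modulo a prime of $K$ above $q$ and apply the Eichler--Shimura relation \eqref{eq:Eichler-Shimura}. Fix any prime $\mathfrak{q}$ of $K$ above $q$. Since $q$ splits completely in $K$, the completion $K_{\mathfrak{q}}$ equals $\Q_q$, which is unramified over $\Q_q$ with residue field $\F_q$, and since $q \nmid n$ the curve $X_1(m,n)$ has good reduction at $\mathfrak{q}$, so the reduction map is well defined on $J_1(m,n)(K_{\mathfrak{q}})$.

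The key step is to show that $A_q$ annihilates $J_1(m,n)(\F_q)$. By \eqref{eq:Eichler-Shimura}, $T_{q,\F_q} = \Frob_q + \diamondop{q}_{\F_q,*}\Ver_q$ as endomorphisms of $J_1(m,n)_{\F_q}$. On any $\F_q$-rational point the relative Frobenius $\Frob_q$ acts as the identity, so for $x \in J_1(m,n)(\F_q)$ one has $\Ver_q(x) = \Frob_q(\Ver_q(x)) = (\Frob_q \circ \Ver_q)(x) = [q]x$. Hence $T_q$ acts on $J_1(m,n)(\F_q)$ as $1 + q\diamondop{q}$, and $A_q = T_q - q\diamondop{q} - 1$ is identically zero there.

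Now let $P \in J_1(m,n)(K)_{\tor}$. Then $A_q(P)$ is again torsion, and its reduction modulo $\mathfrak{q}$ equals $A_q(\bar P) = 0$ by the previous step. To conclude $A_q(P) = 0$ globally, I would invoke injectivity of the reduction map on $J_1(m,n)(K_{\mathfrak{q}})_{\tor}$: since $q \nmid 2n$ is odd and $K_{\mathfrak{q}}/\Q_q$ is unramified, the ramification index $e = 1$ satisfies $e < q - 1$, so the formal group of the N\'eron model of $J_1(m,n)$ over $\mathcal{O}_{K_{\mathfrak{q}}}$ has no nontrivial torsion and reduction is injective on all of $J_1(m,n)(K_{\mathfrak{q}})_{\tor}$.

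The argument is essentially routine; the only real subtlety is handling the $q$-primary part of the torsion, since for the prime-to-$q$ part injectivity of reduction is automatic and nothing about $K_{\mathfrak{q}}$ being unramified is required. This is precisely the place where the completely-split hypothesis (which ensures unramifiedness at $\mathfrak{q}$) is used in an essential way, rather than merely asking that $q$ have a prime of degree $1$ in $K$.
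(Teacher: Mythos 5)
Your proof is correct and follows essentially the same route as the paper: reduce modulo a prime of $K$ above $q$ (with residue field $\F_q$ by the completely-split hypothesis), use the Eichler--Shimura relation \eqref{eq:Eichler-Shimura} together with $\Frob_q$ acting trivially on $\F_q$-points and $\Frob_q\circ\Ver_q=[q]$ to see that $A_q$ kills the reduction, and conclude by injectivity of reduction on torsion since $q>2$ is unramified in $K$. The only cosmetic difference is that the paper composes Frobenius and Verschiebung in the opposite order, which is immaterial.
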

\begin{proof}
    Let $P$ be a point of order $k$ on $J_1(m,n)(K)$ and let $\wp$ be a prime of $K$ above $q$. Then $A_q(P)$ is a point of order dividing $k$. By the Eichler-Shimura relation \eqref{eq:Eichler-Shimura} and since $\Ver_q \circ \Frob_q=q$
    we have 
    $$T_{q,\F_q}(P_{\F_q})=\Frob_q(P_{\F_q})+\diamondop{q}_{\F_q,*}\Ver_q(q)=P_{\F_q}+q\diamondop{q} P_{\F_q}.$$
    The statement of the proposition now follows from the fact that reduction mod $\wp$ is injective on $J_1(m,n)(K)_{\tor}$ because $q>2$ and $\wp$ is unramified in $K$.
\end{proof}

\section{Previous results}
We recall previously proven results about torsion groups of elliptic curves over quartic fields. The following result tells us which quartic torsion groups appear infinitely often (in the sense that there exist infinitely many elliptic curves with different $j$-invariants with that torsion group over some quartic field). 

\begin{theorem}[{\cite[Theorem 3.6.]{JeonKimPark06}}]\label{infinite_quartic}
If $K$ varies over all quartic number fields and $E$ varies over
all elliptic curves over $K$, the groups which appear infinitely often as
$E(K)_{\tor}$ are exactly the following
\begin{alignat*}{2}
& \torz{n},  && n=1-18, 20,21,22,24 \\
& \tg{2}{2n}, \quad \quad && n=1-9 \\
& \tg{3}{3n}, \quad && n=1-3 \\
& \tg{4}{4n}, \quad && n=1,2 \\
& \tg{5}{5}, \quad && \\
& \tg{6}{6}. \quad && \\
\end{alignat*}
\end{theorem}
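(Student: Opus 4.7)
The plan is to prove both inclusions: every group in the displayed list is realized infinitely often, and no group outside the list is. Since the ``finitely many $j$-invariants'' condition on $X_1(m,n)$ translates to ``finitely many non-cuspidal $\mathbb{Q}$-points of degree $\leq 4$'', the natural tool is the Harris--Silverman/Abramovich--Harris characterization: a curve $X/\mathbb{Q}$ of genus $\geq 2$ has infinitely many quartic points exactly when it admits either a $\mathbb{Q}$-rational map of degree $\leq 4$ to $\mathbb{P}^1$, or a $\mathbb{Q}$-rational map of degree $\leq 2$ to a positive rank elliptic curve over $\mathbb{Q}$ (for $d=4$ one also has to treat the handful of exceptional higher-dimensional bielliptic phenomena, but these will not appear in our range).

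For the realization direction, I would go through the list and for each $(m,n)$ exhibit a concrete geometric reason the corresponding $X_1(m,n)$ has infinitely many non-cuspidal quartic points: (i) when $X_1(m,n)$ already has $\mathbb{Q}$-gonality $\leq 4$, compose the modular parametrization with the coordinate projection to obtain a family parametrized by $\mathbb{P}^1(\mathbb{Q})$; (ii) when $X_1(m,n)$ has genus $1$ with positive Mordell--Weil rank over a small extension, use base change plus translation to generate infinitely many quartic points; (iii) when $X_1(m,n)$ factors through a lower-level $X_1(m',n')$ that already has infinitely many quartic points, pull back. For the genus-$0$ and small-genus cases this is routine; for the borderline groups like $\Z/24\Z$, $\tg{2}{18}$, $\tg{3}{9}$, $\tg{4}{8}$, $\tg{5}{5}$ and $\tg{6}{6}$ one needs to construct an explicit degree-$\leq 4$ cover to $\mathbb{P}^1$ defined over $\mathbb{Q}$, which is the most delicate part of this direction.

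For the exclusion direction, one must first bound the possibilities. Merel's uniform boundedness theorem (quantified by Parent and later authors) restricts the primes dividing $|E(K)_{\tor}|$ for $[K:\mathbb{Q}]=4$ to $\{2,3,5,7,11,13\}$ together with possibly a short list of larger primes, and combined with the Weil pairing (which forces $\varphi(n)\mid 4$ if $(\Z/n)^2 \subseteq G$) and results on $p^2$-torsion one obtains a finite \emph{a priori} list of candidate groups $G$. For each candidate $G$ not in the displayed list, the goal is to show $X_1(m,n)$ fails the Abramovich--Harris criterion. Concretely, I would: compute the genus and $\mathbb{Q}$-gonality of $X_1(m,n)$ from standard formulas to rule out (i); analyze the Jacobian decomposition $J_1(m,n)\sim \bigoplus A_{[f]}^{\sigma_0(\cdot)}$ via the newform decomposition, and check that no elliptic isogeny factor of $J_1(m,n)/\mathbb{Q}$ of positive analytic rank appears with multiplicity allowing a degree-$\leq 2$ map from $X_1(m,n)$, ruling out (ii); and for groups just outside the list where the curve is of borderline gonality, use explicit quotient arguments (e.g.\ comparing $X_1(m,n)$ with its Atkin--Lehner or diamond quotients) to cut off any remaining maps.

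The main obstacle is squarely the exclusion direction, and within it, the detailed $\mathbb{Q}$-gonality and bielliptic analysis of the borderline curves such as $X_1(24)$ (which \emph{is} on the list), versus $X_1(25)$, $X_1(26)$, $X_1(28)$, $X_1(30)$, $\tg{2}{20}$, etc., which must be shown to have \emph{no} low-degree $\mathbb{Q}$-rational map to $\mathbb{P}^1$ or to a positive-rank elliptic quotient. This is precisely the problem the rest of the present paper attacks from the opposite side (finite versus infinite), and historically the Jeon--Kim--Park paper established these infinite families by the explicit parametrization route above, leaving it to subsequent work to determine whether the borderline candidates with finitely many quartic points have any quartic points at all.
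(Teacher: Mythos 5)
The paper does not prove this statement: it is imported verbatim from Jeon--Kim--Park \cite{JeonKimPark06}, and the only ``proof'' in this paper is that citation. So the question is whether your sketch is a viable reconstruction of their argument. In outline it is the right program --- realize each listed group via an explicit degree $\leq 4$ map of $X_1(m,n)$ to $\PP^1$ (or a low-degree map to a positive-rank elliptic quotient), and exclude the remaining candidates by showing the corresponding modular curves have only finitely many quartic points --- but as written it has a genuine gap and, beyond that, remains a program with none of the cases actually carried out.

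The gap is in the criterion you build everything on. The ``exactly when'' dichotomy (infinitely many quartic points iff a degree $\leq 4$ map to $\PP^1$ or a degree $\leq 2$ map to a positive-rank elliptic curve) is \emph{not} a theorem for $d=4$: the Abramovich--Harris conjecture fails for $d\geq 4$ by Debarre--Fahlaoui, and even the corrected $d=4$ statement has an extra case (double covers of genus-$2$ curves, i.e.\ translates of positive-rank abelian \emph{surfaces} inside $W_4$). You acknowledge this parenthetically and assert the exceptional phenomena ``will not appear in our range,'' but that assertion is exactly what must be proved, curve by curve, in the exclusion direction; it cannot be waved away. The workable tools are Faltings' theorem applied to the image of $X^{(4)}$ in $\operatorname{Pic}^4(X)$ together with the sufficient criteria this paper itself quotes in \Cref{sec:sporadic}: finiteness follows if $\gon_\Q X > 8$ (Frey), or if $\gon_\Q X > 4$ and $\rk J(X)(\Q)=0$; the genuinely hard residual cases are those with $4 < \gon_\Q X \leq 8$ and positive rank. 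Your Merel/Parent prime bound should also be $p\leq 17$ for quartic fields (cf.\ the overview in \Cref{sec:overview}), not $p\leq 13$. Finally, the realization direction for the borderline groups ($\torz{24}$, $\tg{2}{18}$, $\tg{6}{6}$, etc.) requires exhibiting concrete degree-$4$ models or explicit positive-rank elliptic quotients; naming the strategy does not establish either inclusion. In the context of this paper the correct ``proof'' is simply the reference to \cite{JeonKimPark06}, where all of this case work is done.
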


Some possible torsion groups have been ruled out in previous work of Bruin and Najman \cite{bruin_najman2016}, as can be seen from the following result.

\begin{theorem} \label{thm:BN}
  The following groups do not appear as $E(K)_{\tor}$ for any elliptic curve $E$ over any quartic field $K$:
\begin{alignat*}{2}
& \tg{3}{3n}, \quad && n\in \{4,6,9,11,13\}, \\
& \tg{4}{4n}, \quad && n\in \{3,4,7,11,13,17\}, \\
& \tg{5}{5n}, \quad && n\in \{2,3\}, \\
& \tg{8}{8}. \quad && \\
\end{alignat*}
\end{theorem}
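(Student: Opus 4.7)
The plan is to convert each case into a question about low-degree points on a modular curve over a cyclotomic field, exploiting the Weil pairing. If $E/K$ is an elliptic curve over a quartic field with $\tg{m}{n}\hookrightarrow E(K)_\tor$, then $\zeta_m\in K$. For $m\in\{3,4\}$ this forces $[K:\Q(\zeta_m)]=2$, so the moduli point $(E,P,Q)\in Y_1(m,n)(K)$ pushes down to a degree-$\leq 2$ point of $X_1(m,n)_{\Q(\zeta_m)}$; for $m\in\{5,8\}$ it forces $K=\Q(\zeta_m)$ itself, so one must rule out $\Q(\zeta_m)$-rational non-cuspidal points of $X_1(m,n)$.

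Concretely, one needs to rule out degree-$\leq 2$ non-cuspidal points of $X_1(3,3n)_{\Q(\zeta_3)}$ for $n\in\{4,6,9,11,13\}$, of $X_1(4,4n)_{\Q(i)}$ for $n\in\{3,4,7,11,13,17\}$, $\Q(\zeta_5)$-rational non-cuspidal points of $X_1(5,10)$ and $X_1(5,15)$, and $\Q(\zeta_8)$-rational non-cuspidal points of $X_1(8,8)$. The first step is to produce an explicit smooth model of each curve (for instance via $q$-expansions of Siegel units or Klein forms), locate its cusps, and compute the isogeny decomposition $J_1(m,n)\sim\bigoplus_{[f]}A_{[f]}$ together with the Mordell-Weil rank of each factor over the appropriate cyclotomic base.

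With these in hand, my toolkit would be threefold. (i) When $J_1(m,n)_F$ has rank $0$, compute $J_1(m,n)(F)_\tor$ explicitly and perform a Mordell-Weil or Hecke sieve (in the spirit of the sieve developed later in this paper) to show that no non-cuspidal effective divisor of the required degree on $X_1(m,n)$ can reduce compatibly at several auxiliary primes. (ii) For the degree-$\leq 2$ problems over $\Q(\zeta_3)$ or $\Q(i)$, apply Bruin's elliptic Chabauty using an elliptic quotient $E'$ of $J_1(m,n)$ defined over $F$ with $\rk E'(F)<[F:\Q]$, which produces an explicitly enumerable finite list of candidate degree-$2$ points to be matched against the cusp list. (iii) For those curves among the list of genus $0$, one instead either finds a local obstruction to an $F$-rational point, or classifies all $F$-rational points and checks they are cuspidal.

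The main obstacle will be twofold. First, controlling the rank of $J_1(m,n)$ over the cyclotomic field $F=\Q(\zeta_m)$ is significantly harder than over $\Q$: Kato's theorem (cited in the excerpt) gives finiteness of $A_{[f]}(\Q)$ from non-vanishing of $L(A_{[f]},1)$, and to bootstrap this across $F/\Q$ one must verify non-vanishing of every twist of the relevant $L$-function by characters of $\Gal(F/\Q)$. Second, the $\tg{8}{8}$ case sits over $\Q(\zeta_8)$, a degree-$4$ field, with no further cyclotomic descent available; here one must use the $\Gal(\Q(\zeta_8)/\Q)$-action on $X_1(8,8)$ (realized by the diamond operators $\diamondop{a}$ for $a\in(\Z/8\Z)^\times$) to descend the problem to a small quotient on which a Chabauty or rank-$0$ argument becomes feasible, while carefully tracking the cusps through the quotient map.
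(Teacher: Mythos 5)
Your reduction step is correct and is indeed the standard first move: the Weil pairing forces $\zeta_m\in K$, so the $\tg{3}{3n}$ and $\tg{4}{4n}$ cases become questions about non-cuspidal points of degree at most $2$ on $X_1(3,3n)$ over $\Q(\zeta_3)$ and on $X_1(4,4n)$ over $\Q(i)$, while the $\tg{5}{5n}$ and $\tg{8}{8}$ cases become questions about $\Q(\zeta_5)$- and $\Q(\zeta_8)$-rational points. This is exactly the framework in which these groups were eliminated in the literature, and the paper's own proof consists of nothing more than invoking that prior work: all cases follow from Bruin--Najman \cite[Theorem 8]{bruin_najman2016}, except $\tg{5}{10}$ and $\tg{5}{15}$, which follow from \cite[Theorem 6]{bruin_najman2016}. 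So in spirit you are retracing the route of the cited results rather than finding a different one.

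The genuine gap is that, beyond the reduction, nothing is actually proved. The substance of the theorem is the non-existence of the relevant points on fourteen specific modular curves, and your proposal only lists a toolkit (explicit models, isogeny decompositions, Mordell--Weil ranks over $\Q(\zeta_m)$, a sieve, elliptic Chabauty, genus-$0$ local arguments) without carrying out, or even making plausible in any single case, the computations on which everything hinges. Worse, the steps you yourself flag as the ``main obstacle'' --- establishing rank bounds for $J_1(m,n)$ over the cyclotomic base, e.g.\ via non-vanishing of twisted $L$-values, and descending the $X_1(8,8)$ problem to a tractable quotient --- are precisely the points where such an argument can fail or require case-by-case ingenuity, and they are left unresolved. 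As written, the proposal is a research plan whose conclusion is conditional on unperformed verifications; to make it a proof you would either have to execute those computations for each curve or, as the paper does, cite the results of Bruin and Najman where this elimination has already been carried out.
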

\begin{proof}
    All the cases follow from {\cite[Theorem 8]{bruin_najman2016}}, except from $\tg{5}{5n}$ for $n=2$ and $3.$ These could a priori be torsion groups for elliptic curves only over $\Q(\zeta_5)$, but {\cite[Theorem 6]{bruin_najman2016}} shows that this is impossible.
\end{proof}

\section{Auxiliary results}
In this section we list (and slightly modify) some known results that we will use. 

\begin{theorem}[Abramovich's bound]
\label{abramovich}
    $$\gon_\C X_\Gamma>\frac{325}{2^{15}}[\PSL_2(\Z): \Gamma].$$
\end{theorem}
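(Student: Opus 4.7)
The plan is to combine a Li-Yau-type spectral gonality inequality with Selberg-type bounds on the first Laplacian eigenvalue of modular curves. The first step is to invoke the Li-Yau inequality in the form Abramovich uses: for a compact Riemann surface $X$ equipped with its uniformizing hyperbolic metric of area $A$ and first positive Laplacian eigenvalue $\lambda_1$, every non-constant holomorphic map $\phi\colon X \to \PP^1$ satisfies
$$\deg \phi \;\geq\; \frac{\lambda_1\, A}{8\pi}.$$
One proves this by pulling back the Fubini-Study metric on $S^2 = \PP^1(\C)$ via $\phi$, post-composing with a Möbius transformation so that the push-forward measure on $S^2$ is in ``balanced position'' (all three coordinate functions have average zero on $X$), and then using the variational characterization of $\lambda_1$ against those three coordinate functions together with the conformal invariance of the Dirichlet energy in real dimension two.

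Step two is to compute $\vol(X_\Gamma)$. Since the standard fundamental domain of $\PSL_2(\Z)$ on the upper half plane has hyperbolic area $\pi/3$, one gets $\vol(X_\Gamma) = \frac{\pi}{3}[\PSL_2(\Z):\Gamma]$; the cusps cause no issue since Gauss-Bonnet on the compactified curve gives the same value. Step three is to insert the strongest available lower bound on $\lambda_1$. Selberg's $3/16$ theorem would already yield a linear lower bound, but with a smaller constant; to match the constant $325/2^{15}$ in the statement one instead uses the Kim-Sarnak improvement $\lambda_1(X_\Gamma) \geq 975/4096$, valid for any congruence subgroup $\Gamma$. Substituting,
$$\gon_\C X_\Gamma \;\geq\; \frac{\lambda_1\,\vol(X_\Gamma)}{8\pi} \;\geq\; \frac{(975/4096)(\pi/3)}{8\pi}\,[\PSL_2(\Z):\Gamma] \;=\; \frac{325}{2^{15}}\,[\PSL_2(\Z):\Gamma],$$
which is exactly the claimed bound.

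The main obstacle is really the Li-Yau inequality itself: the balancing argument requires a topological fixed-point input (one needs that any non-constant holomorphic map from $X$ to $\PP^1$ can be post-composed with a Möbius transformation placing the push-forward measure in balanced position, which is a fixed-point argument on the open ball). Everything else---the hyperbolic area computation and the eigenvalue input---is essentially off-the-shelf. I would also flag that the stated bound tacitly assumes $\Gamma$ is a congruence subgroup, since Selberg/Kim-Sarnak is used; for a general finite-index $\Gamma$ only weaker lower bounds would apply and the constant would have to degrade accordingly.
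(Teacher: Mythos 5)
Your proposal is correct and takes essentially the same route as the paper: the paper's proof is simply a citation of Abramovich's Theorem 0.1 (whose proof is exactly the Li--Yau eigenvalue/conformal-volume argument you sketch, applied to the finite-area cusped hyperbolic metric on $\Gamma\backslash\mathbb{H}$ of area $\tfrac{\pi}{3}[\PSL_2(\Z):\Gamma]$) combined with the Kim--Sarnak bound $\lambda_1\geq \tfrac{975}{4096}$ from the appendix to Kim's paper, which is precisely your substitution yielding $\tfrac{325}{2^{15}}$. Your caveat about congruence subgroups is apt (that is all the paper needs); the only slight imprecision is that the spectral input is about the Laplacian on the noncompact finite-volume quotient $\Gamma\backslash\mathbb{H}$, not the uniformizing metric of the compactified curve, which is how Abramovich's treatment handles the cusps.
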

\begin{proof}
    This is \cite[Theorem 0.1]{abramovich} using the bound on the leading nontrivial eigenvalue of the non-Euclidean Laplacian $\lambda_1>\frac{975}{4096}$ from \cite[Appendix 2]{Kim03}.
\end{proof}

\begin{proposition}\label{prop:hecke-kernel}
Let $n$ be an integer, $q \nmid n$ a prime and let $H$ be a subgroup
  of~$(\Z/n\Z)^\times$ containing $-1$. Let $a=\sum_{i=1}^k a_i\diamondop{d_i} \in \Z[(\Z/n\Z)^\times/H]$ be a linear combination of diamond operators. We consider $t :=T_q - a$ 
as a correspondence on $X_1(N)$, inducing an endomorphism of the divisor group of $X_1(N)$
over $\C$. Then the kernel of $t$ is contained in the subgroup of divisors supported in cusps.
\end{proposition}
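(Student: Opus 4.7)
The plan is to reduce the claim to non-cuspidal divisors, project to the $j$-line, and exploit the structure of the $q$-isogeny graph to produce a contradiction via an ``extremal $j$-invariant'' argument.

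First, since both $T_q$ and every diamond operator preserve the cuspidal and non-cuspidal loci of $X_1(n)$ (cusps, being N\'eron polygons with level structure, are sent to cusps by any moduli-theoretic correspondence), $\ker t$ decomposes accordingly, so it suffices to show that $t$ acts injectively on the subgroup of non-cuspidal divisors. Assume for contradiction that $D = \sum_x n_x x \ne 0$ is supported on non-cuspidal points and satisfies $t D = 0$. I then push forward under the forgetful map $\pi: X_1(n) \to X(1)$, $(E,P) \mapsto j(E)$. Diamond operators preserve every fiber $X_{j} := \pi^{-1}(j)$, while $T_q$ sends $\mathrm{Div}(X_j)$ into $\bigoplus_{j' : \Phi_q(j,j')=0} \mathrm{Div}(X_{j'})$, where $\Phi_q$ is the $q$-th modular polynomial. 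Writing $D = \sum_j D_j$ fiberwise, the identity $t D = 0$ becomes, for every $j'$,
\[ a D_{j'} = \sum_j T_q|_{j \to j'}(D_j). \]
Moreover, the CM-by-$K$ part and the non-CM part of $D$ are preserved separately by $T_q$ and $a$, since CM is an isogeny invariant, so one may treat each summand on its own.

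For the non-CM part, the $q$-isogeny graph on non-CM $j$-invariants is a disjoint union of $(q+1)$-regular trees: from non-CM $E$ the $q+1$ subgroups of $E[q]$ give $q+1$ distinct quotient $j$-invariants, and any cycle would produce a non-scalar endomorphism of $E$, contradicting $\End(E) = \Z$. The finite support of $\pi_* D_{\text{non-CM}}$ has a leaf $j^*$ in its convex hull, meaning at least $q$ of its $q+1$ neighbors lie outside the support. Picking such a neighbor $j'$, the relation at $j'$ reduces to $T_q|_{j^* \to j'}(D_{j^*}) = 0$, since $j^*$ is the unique element of the support adjacent to $j'$ and $D_{j'} = 0$. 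By the tree property there is a unique subgroup $G_0 \subset E_{j^*}[q]$ with $j(E_{j^*}/G_0) = j'$, and the map $(E,P) \mapsto (E/G_0, P \bmod G_0)$ is an injection $X_{j^*} \hookrightarrow X_{j'}$ because $\gcd(q,n) = 1$ forces $G_0 \cap E[n] = 0$, so reduction modulo $G_0$ restricts to an isomorphism on $n$-torsion. Hence $D_{j^*} = 0$, contradicting $j^* \in \mathrm{supp}(\pi_* D_{\text{non-CM}})$.

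The CM case is handled analogously, replacing the tree by the isogeny volcano: for each imaginary quadratic $K$, pick $j^*$ of maximal conductor $f^*$ in the support of the CM-by-$K$ piece, take a descending $q$-neighbor $j'$ (of conductor $qf^*$, necessarily outside the support), and note that $j^*$ is the unique ascending parent of $j'$, so the fiberwise identity again collapses to $T_q|_{j^* \to j'}(D_{j^*}) = 0$. The main obstacle here is the injectivity of $T_q|_{j^* \to j'}$ when several subgroups $G \subset E_{j^*}[q]$ give the same $j' = j(E_{j^*}/G)$; this can occur when $\Aut(E_{j^*})$ is nontrivial (i.e., $j^* \in \{0, 1728\}$) or for purely arithmetic reasons tied to the ideal class of $\mathfrak q$ in $\End(E_{j^*})$. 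One expects to handle this either by a careful $\Aut$-equivariant analysis on the fiber $X_{j'}$, or by refining the choice of $j^*$ so that $f^* > 1$ (forcing $\Aut(E_{j^*}) = \{\pm 1\}$) whenever possible and dealing with the top-of-volcano exceptions $j^* \in \{0, 1728\}$ by an ad hoc argument exploiting that the $\Aut(E_{j^*})$-orbits on $E_{j^*}[q]$ are small and explicit.
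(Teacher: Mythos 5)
Your overall strategy (split off the cuspidal part, fiber over the $j$-line, and use extremal vertices of the $q$-isogeny graph) is a genuinely different argument from the paper's, which proves nothing locally here but defers verbatim to [DKSS, Proposition 2.4]; note also that your scheme has the pleasant feature of working for an arbitrary $a\in\Z[(\Z/n\Z)^\times/H]$ since you only ever evaluate the relation at fibers where $D$ vanishes. The non-CM half is essentially correct, with one repair needed: you must choose the neighbour $j'$ of the leaf $j^*$ \emph{outside the convex hull} of the support $S=\{j: D_j\neq 0\}$, not merely outside $S$. As written (``at least $q$ of its $q+1$ neighbors lie outside the support\dots picking such a neighbor''), $j'$ could be the unique hull-neighbour of $j^*$; if two support vertices sit at distance two with common neighbour $j'$, then both feed into the fiber over $j'$ and the relation does not collapse to $T_q|_{j^*\to j'}(D_{j^*})=0$. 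Since a leaf of the hull has at least $q$ neighbours outside the hull, and a vertex outside the hull cannot be adjacent to two support vertices (it would then lie on the path between them), the fix is immediate --- but the uniqueness claim ``$j^*$ is the unique element of the support adjacent to $j'$'' does not follow from your stated choice. (Also, work with $\{j:D_j\neq0\}$ rather than the literal pushforward $\pi_*D$, whose coefficients can cancel.)

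The genuine gap is the CM case, which you explicitly leave open: the whole argument there rests on injectivity of $T_q|_{j^*\to j'}$ on $\mathrm{Div}(X_{j^*})$, and you only say one ``expects'' to handle the possibility that several subgroups $G\subset E_{j^*}[q]$ give the same $j'$. Since CM points are non-cuspidal points of $X_1(n)$, the proposition is simply not proved without this step. What is missing is concrete and fillable. (i) Away from $j^*\in\{0,1728\}$ one must show distinct \emph{descending} order-$q$ subgroups of $E_{j^*}$ have non-isomorphic quotients: each curve of conductor $qf^*$ has a unique ascending $q$-isogeny, the parent map is equivariant for the simply transitive action of $\Cl(\mathcal{O}_{qf^*})$, and $h(\mathcal{O}_{qf^*})/h(\mathcal{O}_{f^*})$ equals $q-\bigl(\tfrac{d_K}{q}\bigr)$ (resp.\ $q$ when $q\mid f^*$), which is exactly the number of descending subgroups whenever $\mathcal{O}_{f^*}^\times=\{\pm1\}$; so multiplicity one holds, but this counting has to be given --- your worry about the ideal class of $\mathfrak q$ concerns only horizontal edges, which your argument never uses. (ii) If the maximal-conductor vertex is $j^*\in\{0,1728\}$, then $f^*=1$ and $h_K=1$, so the entire CM-by-$K$ support is that single vertex; the edge to a child then has multiplicity $u=\#\Aut(E_{j^*})/2\in\{2,3\}$, and the fiber map sends the class of $(E,P)$ to the sum of the classes of $(E/G_0,\zeta P\bmod G_0)$ over $\zeta\in\Aut(E)/\{\pm1\}$. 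This is still injective on divisors --- distinct $\Aut(E)$-orbits of $P$ map to sums with pairwise disjoint supports, using $G_0\cap E[n]=0$ and $\Aut(E_{j'})=\{\pm1\}$ --- but that, too, must be argued rather than deferred to ``an ad hoc argument''. With (i) and (ii) supplied your proof closes; as it stands, the CM half is a plan, not a proof.
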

\begin{proof}
This is a slight generalization of \cite[Proposition 2.4]{DKSS} and the proof carries over ad verbatim.
\end{proof}

\begin{lemma}\label{lem:special_cusp}
Let $m=1$ or $2$, $n$ an integer such that $m|n$ and $q$ a prime that does not divide $2n$. Let $C_0 \in X_1(m,n)(\mathbb \Z[1/n])$ be the cusp whose moduli interpretation is the  N\'eron $n$-gon $E_0 := \mathbb G_m \times \torz{n}$ together with the points $P_0:=((-1)^{m-1},0)$ of order $m$ and $Q_0 :=(1,1)$ of order $n$. Then $A_q(C_0)$ is $0$.
\end{lemma}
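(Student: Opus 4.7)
The plan is to compute $T_q(C_0)$ directly from the moduli interpretation and show that $T_q(C_0) = C_0 + q\diamondop{q}C_0$ as a divisor, which immediately gives $A_q(C_0)=0$.

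I would deform $C_0$ to the Tate curve $E_t := \mathbb{G}_m/t^{n\Z}$ with level-$n$ structure $Q_t := t$ (and, in the case $m = 2$, level-$m$ structure $P_t := -1$), so that the limit $t \to 0$ recovers $(E_0, P_0, Q_0)$. Over a suitable ramified cover with $s := t^{1/q}$, the $q+1$ cyclic order-$q$ subgroups of $E_t$ split into the connected $\mu_q \subset \mathbb{G}_m$ and the $q$ \'etale subgroups $\langle s^n \zeta_q^j \rangle$ for $j = 0, \dots, q-1$. The connected quotient $E_t/\mu_q$ is identified with the Tate curve $E_u$ in the new parameter $u := t^q$ via the $q$-th power map, sending $Q_t \mapsto u$ (and $P_t \mapsto (-1)^q = -1$); its limit cusp is therefore $C_0$. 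For each \'etale quotient, the change of parameter $v := s\zeta_{nq}^j$ identifies $E_t/\langle s^n\zeta_q^j\rangle$ with $E_v$ and sends $Q_t = s^q$ to $v^q\zeta_n^{-j}$, whose $v \to 0$ limit is the level structure $(\zeta_n^{-j}, q \bmod n)$ on the N\'eron $n$-gon.

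The key point is that the N\'eron $n$-gon (as a generalized elliptic curve) has automorphism group $\mu_n \times \{\pm 1\}$: besides inversion, each $\eta \in \mu_n$ acts as the ``twisting'' automorphism $\phi_\eta : (z, i) \mapsto (z\eta^i, i)$, which respects both the group law and the singular gluings of adjacent components. Choosing $\eta = \zeta_n^{jq^{-1}}$ (using $\gcd(q, n) = 1$), $\phi_\eta$ sends $(\zeta_n^{-j}, q)$ to $(1, q)$, so each of the $q$ \'etale contributions represents the same cusp $\diamondop{q}C_0 = (E_0, P_0, (1, q))$. Summing all $q+1$ contributions gives $T_q(C_0) = C_0 + q\diamondop{q}C_0$ as divisors, hence $A_q(C_0) = 0$.

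The main obstacle is correctly identifying the automorphism group of the N\'eron $n$-gon---specifically, verifying that the twisting automorphisms $\phi_\eta$ are legitimate generalized-elliptic-curve automorphisms and that they preserve $P_0 = ((-1)^{m-1}, 0)$ in the $m = 2$ case (which holds because $\phi_\eta$ acts trivially on the identity component $\{i = 0\}$). As a sanity check, in the Jacobian the identity $A_q(C_0) = 0$ is also forced modulo $q$ by the Eichler--Shimura relation \eqref{eq:Eichler-Shimura} applied to the $\F_q$-rational class $[C_0]$, using $\Frob_q[C_0] = [C_0]$ and $\Ver_q[C_0] = q[C_0]$.
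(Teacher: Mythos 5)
Your computation is correct and arrives at the same key identity as the paper, namely $T_q(C_0)=C_0+q\diamondop{q}C_0$ as divisors, but you establish it by a different route. The paper works directly on the compactified Hecke correspondence: it introduces $X_{0,1}(q,m,n)$ with the two projections $\pi_1,\pi_2$, observes that exactly two cusps lie over $C_0$ (the N\'eron $n$-gon with the subgroup $\mu_q\subseteq\mathbb G_m$, and a N\'eron $qn$-gon with an \'etale subgroup of order $q$), reads off $\pi_1^*(C_0)=C_0'+qC_0''$ from the ramification of $\pi_1$, and computes the $\pi_2$-images of these two cusps to be $C_0$ and $\diamondop{q}C_0$. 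You instead compute $T_q$ on the nearby Tate curve $\mathbb G_m/t^{n\Z}$ and take the limit $t\to 0$: your splitting of the $q+1$ order-$q$ subgroups into $\mu_q$ and the $q$ Galois-conjugate subgroups $\langle s^n\zeta_q^j\rangle$ is exactly the paper's pair of cusps $C_0',C_0''$, and the fact that all $q$ \'etale quotients collapse to the same cusp in the limit is your substitute for the paper's statement that $\pi_1$ is totally ramified of degree $q$ at $C_0''$. What your version buys is that it avoids naming the auxiliary curve $X_{0,1}(q,m,n)$ and makes the multiplicity $q$ transparent via the monodromy $s\mapsto\zeta_q s$; what it costs is two extra justifications that the paper's formulation gets for free: (i) you implicitly use that the extended correspondence on the complete curve is the closure of the one on $Y_1(m,n)$, so that $T_q(C_0)$ really is the specialization $\lim_{t\to 0}T_q(x_t)$ in the symmetric power — this should be said explicitly, and it is the same properness/flatness fact underlying the paper's ramification computation; and (ii) you need the twisting automorphisms $\phi_\eta$ of the $n$-gon to normalize the limiting level structure $(\zeta_n^{-j},q)$ to $(1,q)$, which is fine (and your check that $\phi_\eta$ fixes $P_0$ on the identity component is the right one), though note the automorphism group of the N\'eron $n$-gon as a generalized elliptic curve is $\mu_n\rtimes\{\pm1\}$, a semidirect rather than direct product — harmless for your argument. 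Your closing Eichler--Shimura remark is only a consistency check in the Jacobian modulo $q$ and could not replace the divisor-level identity, but you present it as such, so this is not an issue.
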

\begin{proof}
    The proof uses the moduli interpretation of the Hecke operators. To this end, let $X_{0,1}(q,m,n)$ denote the modular curve parameterizing quadruples $(E,G,P,Q)$ where $E$ is a generalized elliptic curve, $G$ a subgroup of order $q$ and $P,Q$ points of order $m$ and $n$ such that $P,Q$ together generate a subgroup of order $mn$. Now consider the diagram
\[
\begin{tikzcd}
& X_{0,1}(q,m,n) \arrow[dl, "\pi_1"'] \arrow[dr, "\pi_2"] & \\
X_{1}(m,n) & & X_1(m,n)
\end{tikzcd}
\]
where $\pi_1(E,G,P,Q) = (E,P,Q)$ and $\pi_2(E,G,P,Q) =(E/G,P \mod G,Q \mod G)$. A small computation verifies that $T_q = \pi_{2,*} \circ \pi_1^*$ using the definition of $T_q$ as in \cref{hecke_def_1,hecke_def_2}. There are exactly two cusps $C_0'$ and $C_0''$ that map to $C_0$ under $\pi_1$. To be more precise, take $C_0'$ to be the cusp $(E_0,G_0',P_0,Q_0)$ with $E_0,P_0,Q_0$ as in the proposition and $G_0' = \mu_q \subseteq \mathbb G_m$, and take $C_0''$ to be the cusp $(E_0'',G_0'',P_0'',Q_0'')$ where $E_0'' := \mathbb G_m \times \torz{qn}$, $G_0'' = m\Z/qm\Z \subseteq \Z/qm\Z$, $P_0'':=((-1)^{m-1},0)$ and $Q_0'' :=(1,q)$. Note that $\pi_1$ is unramified at $C_0'$ and completely ramified of degree $q$ at $C_0''$ so that $\pi_1^*(C_0) = C_0'+qC_0''$. Now $\pi_2(C_0')=C_0$, since taking the quotient by $G_0'=\mu_q$ is the same as raising to the power $q$ on $\mathbb G_m$.  Similarly, one easily verifies $\pi_2(C_0'')=(E_0,P_0,pQ_0) = \diamondop{q}C_0$. Putting this all together yields $$T_q(C_0)=(\pi_{2,*} \circ \pi_1^*)(C_0)=C_0+q\diamondop{q} C_0$$ from which the proposition follows.

\end{proof}

The following proposition is a generalisation of a proposition of the first author \cite[Proposition 7.1]{DKSS}, which will appear in the unpublished article \cite{derickxstoll}. As that paper is still unpublished we include the proof here as well for completeness.

\begin{proposition}[(Derickx)] \label{prop:hecke_gon}
  Let $d \ge 1$, let $n$ be an integer, $q \nmid n$ a prime and let $H$ be a subgroup
  of~$(\Z/n\Z)^\times$ containing $-1$. We assume that there is an $a \in (\Z/n\Z)^\times/ H$ such that $T := (\diamondop{a}-1)(J_H(\Q))$ is finite.
  When $q=2$ we either assume that $\#A$ is odd or, more specifically, that the $2$-primary part of~$T$ is killed by $A_2$.
  We then set
  \[ k_q = \begin{cases}
           q+1 & \text{if $J_H(\Q)$ is finite, otherwise} \\
           2q + 1 & \text{if $a \in \set{q, q^{-1}}$,} \\
           2(q + 1)& \text{if $a \notin \set{q, q^{-1}}$.}
         \end{cases}
  \]
  Then $k_qd < \gon_{\Q}(X_H)$ implies that any point
  in~$X_H^{(d)}(\Q)$ without cusps in its support is a sum of orbits
  under~$\diamondop{a}$. In particular, the inequality above holds when
  \begin{equation}\label{DS-inequality}
      d < \frac{325}{2^{15}} \, \frac{[\SL_2(\mathbb Z) :\Gamma_1(n)]}{k_q \cdot \#H}.
  \end{equation} 
\end{proposition}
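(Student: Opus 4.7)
The plan is to produce, from a hypothetical non-cuspidal $x \in X_H^{(d)}(\Q)$ that is not a sum of $\diamondop{a}$-orbits, a non-constant function on $X_H$ of degree at most $k_q d$, contradicting $\gon_\Q(X_H) > k_q d$. Let $C_0$ be the $\Q$-rational cusp of \Cref{lem:special_cusp}, and set $D := (\diamondop{a}-1) x$. The degree-$0$ class $(\diamondop{a}-1)[x - d C_0]$ lies in $T$, which is finite and hence torsion in $J_H(\Q)$. For odd $q$, the operator $A_q = T_q - q\diamondop{q} - 1$ annihilates $J_H(\Q)_{\tor}$: this follows from the Eichler--Shimura relation \eqref{eq:Eichler-Shimura} together with the injectivity of reduction mod $q$ on torsion, exactly as in the proof of \Cref{prop:kill_tor}. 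For $q = 2$ the corresponding statement on $T$ is built into the hypothesis. Combined with $A_q C_0 = 0$ from \Cref{lem:special_cusp}, one deduces that $A_q D$ vanishes in $J_H(\Q)$, so $A_q D = (f)$ is principal.

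To bound $\deg f$, I will expand
\begin{equation*}
 A_q D \;=\; \bigl(T_q \diamondop{a}\, x + q\diamondop{q}\, x + x\bigr) \;-\; \bigl(T_q x + q\diamondop{q}\diamondop{a}\, x + \diamondop{a}\, x\bigr),
\end{equation*}
in which both parenthesised effective divisors have degree $2(q+1)d$. Generically this gives $\deg f \le 2(q+1)d$, so $k_q = 2(q+1)$. When $a = q^{-1}$ one has $\diamondop{q}\diamondop{a} = 1$, so the summand $qx$ on the negative side absorbs the $x$ on the positive side, reducing both effective divisors to degree $(2q+1)d$; when $a = q$, the summand $\diamondop{q}\, x = \diamondop{a}\, x$ on the negative side is absorbed by $q\diamondop{q}\, x$ on the positive side, again reducing to $(2q+1)d$. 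Hence $k_q = 2q+1$ in these distinguished cases. If $J_H(\Q)$ itself is finite, I run the argument with $x$ in place of $D$: $[x - dC_0]$ is already torsion, so $A_q x = (f)$ with $\deg f \le (q+1)d$, giving $k_q = q + 1$.

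If $k_q d < \gon_\Q(X_H)$, then $f$ must be constant, so $A_q D = 0$ as a divisor. By \Cref{prop:hecke-kernel} applied with $t = T_q - (q\diamondop{q} + 1)$, the divisor $D = \diamondop{a} x - x$ is supported entirely on cusps; since $\diamondop{a} x$ and $x$ are both cusp-free, this forces $\diamondop{a} x = x$ as effective divisors, equivalently $x$ is a sum of $\diamondop{a}$-orbits. The explicit inequality \eqref{DS-inequality} then follows from combining \Cref{abramovich} with the identity $[\PSL_2(\Z) : \overline{\Gamma_H}] = [\SL_2(\Z) : \Gamma_1(n)] / \#H$, valid since $-1 \in H$. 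The main subtlety will be verifying the cancellation that yields the improved bound $k_q = 2q + 1$ in the distinguished cases $a \in \{q, q^{-1}\}$, where the identity $\diamondop{q}\diamondop{a} \in \{1, \diamondop{q^2}\}$ allows one summand of degree $d$ in the effective expression of $A_q D$ to merge with another.
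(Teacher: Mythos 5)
Your proposal is correct and follows essentially the same route as the paper's proof: the class $A_q(\diamondop{a}-1)[x-dC_0]$ vanishes via \Cref{lem:special_cusp} and the Eichler--Shimura torsion-killing, the degree of the resulting principal divisor is bounded by separating positive and negative parts (with exactly the same cancellation giving $k_q=2q+1$ when $a\in\{q,q^{-1}\}$), and \Cref{prop:hecke-kernel} plus the cusp-free hypothesis forces $\diamondop{a}x=x$; your only additions are a reordering (bounding $\deg f$ first and then letting the gonality force $f$ constant, rather than the paper's case split) and an explicit derivation of \eqref{DS-inequality} from \Cref{abramovich}, which the paper leaves implicit. Just make sure to also finish the finite-rank case as the paper does (there $A_qx=0$ and \Cref{prop:hecke-kernel} show no cusp-free $x$ exists at all), since your last paragraph only spells out the endgame for $D=(\diamondop{a}-1)x$.
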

\begin{proof}
Let $c \in X_1(N)(\Q)$ be a rational cusp such that $A_q(c) = 0$; such a cusp exists by \Cref{lem:special_cusp}.  Let $c'$ be the image of $c$ in $X_H(\Q)$. Define 

\[ B_{a} :=\begin{cases}
           1 & \text{if $J_H(\Q)$ is finite, } \\
           (\diamondop{a}-1) & \text{otherwise.}
         \end{cases}
         \]
Let $D \in X_H^{(d)}(\Q)$ be an effective divisor of degree $d$ without cusps in its support, and consider the linear equivalence class $[D-dc']$ as a rational point in $J_H(N)(\Q)$. Then $A_qB_{a}([D-dc'])$ is $0$. Indeed, if $J_H(\Q)$ is finite then $A_qB_{a}([D-dc'])=0$ because $A_q$ kills all the torsion. If $J_H(\Q)$ is not finite, then $(\diamondop{a}-1)([D-dc'])$ is torsion by assumption and hence also killed by $A_q$.

Now since $A_qc' =0$ it follows that $A_qB_{a}(D-dc')=A_qB_{a}D$.
Now if $[A_qB_{a}D]=0$ there are two possibilities, either $A_qB_{a}D=0$ or there is a non-constant function $f$ such that $A_qB_{a}D = \ddiv f$.

First consider the case $A_qB_{a}D=0$. Since $D$ and hence $B_{a}D$ does not contain any cusps in its support, one can apply \Cref{prop:hecke-kernel} to deduce $B_{a}D=0$. If $J_H(\Q)$ is finite, this implies $D=0$, so in this case there does not exist an effective divisor $D \in X_H^{(d)}(\Q)$ of degree $d$ without cusps in its support, and there is nothing to prove. If $J_H(\Q)$ is not finite, then $B_{a}D=0$ implies $\diamondop a D =D$ forcing $D$ to be a sum of orbits under $\diamondop a$.

What remains to show is that the second case, $A_qB_{a}D = \ddiv f$ for some non-constant function $f$, cannot happen.

By separating the positive and the negative terms one can rewrite $A_qB_{a}D$ as \begin{align}A_qB_{a}D=\begin{cases}
           T_qD - (q\diamondop{q}+1)D  & \text{if $J_H(\Q)$ is finite, } \\
           (T_q\diamondop{a}+q\diamondop{q}+1)D - (T_q +\diamondop{a}(q\diamondop{q}+1))D & \text{otherwise}. 
         \end{cases}\label{eq:linear-equivalence}\end{align}
First consider the case when $J_H(\Q)$ is finite. Let $k_q:=q+1$. Note that $T_qD$  and $(q\diamondop{q}+1)D$ are both effective divisors of degree $k_qd$, so $\deg f \leq k_qd$ which contradicts $k_qd < \gon_\Q(X_H).$

Now consider the case when $J_H(\Q)$ is not finite. Let $k_q:=2(q+1)$.
If $a \notin \set{q,q^{-1}}$ then $(T_q\diamondop{a}+q\diamondop{q}+1)D$ and $(T_q +\diamondop{a}(q\diamondop{q}+1))D$ are effective divisors of degree $k_qdd$ and $\deg f \leq k_qd$ again contradicting $k_qd < \gon_\Q(X_H).$
In the cases $a=q$ and $a=\diamondop{q^{-1}}$ the right-hand side of \eqref{eq:linear-equivalence} simplifies to $(T_q\diamondop{q}+(q-1)\diamondop{q}+1)D - (T_q +q\diamondop{q^2})D$ respectively $ (T_q\diamondop{a}+q\diamondop{q})D - (T_q +(q-1)\diamondop{q}+\diamondop{q^{-1}})D.$ This simplification decreases the degree of the positive and negative part by 1 and hence there is again a contradiction with $k_qd < \gon_\Q(X_H)$ as before.
         
\end{proof}

\begin{corollary}
    Let the assumptions be as in \Cref{prop:hecke_gon}, and assume that the order of $a \in (\Z/n\Z)^\times/ H$ does not divide $d$. Then any rational point 
  on~$X_H^{(d)}$ is a sum of cusps and CM points.
\end{corollary}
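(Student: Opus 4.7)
The plan is to decompose an arbitrary $D\in X_H^{(d)}(\Q)$ along the cuspidal/non-cuspidal splitting and apply \Cref{prop:hecke_gon} to the non-cuspidal piece, then use an automorphism argument to control the $\diamondop a$-orbit sizes on the non-CM locus.

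First I would write $D = D_{\text{cusp}} + D_{\text{nc}}$, where both summands are $\Q$-rational because the cuspidal subscheme of $X_H$ is a $\Q$-rational closed subscheme. Since $\deg D_{\text{nc}} \le d$, the gonality inequality $k_q d < \gon_\Q(X_H)$ of \Cref{prop:hecke_gon} also applies to $D_{\text{nc}}$. Applying that proposition to $D_{\text{nc}}$ yields two possibilities: either $J_H(\Q)$ is finite and $D_{\text{nc}} = 0$, in which case $D = D_{\text{cusp}}$ is trivially a sum of cusps, or $J_H(\Q)$ is infinite and $\diamondop a D_{\text{nc}} = D_{\text{nc}}$, so $D_{\text{nc}}$ is a sum of $\diamondop a$-orbits of non-cuspidal geometric points.

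The key observation is then an orbit-size calculation: for a non-CM non-cuspidal point $(E,P,Q)$ one has $\Aut(E) = \{\pm 1\}\subset H$ (using the standing assumption $-1\in H$), so the stabilizer of the point inside $\langle a\rangle \subset (\Z/n\Z)^\times/H$ is trivial, and its $\diamondop a$-orbit has size exactly $\mathrm{ord}(a)$. Writing $D_{\text{nc}} = D_{\text{nc}}^{\mathrm{CM}} + D_{\text{nc}}^{\text{non-CM}}$ — both pieces being $\Q$-rational and $\diamondop a$-invariant because the CM locus is both $\Q$-rational and $\diamondop a$-stable — the non-CM part is then a sum of $\diamondop a$-orbits each of size $\mathrm{ord}(a)$, so $\mathrm{ord}(a)$ divides $\deg D_{\text{nc}}^{\text{non-CM}}$.

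The final step is to invoke the hypothesis $\mathrm{ord}(a)\nmid d$ to conclude $D_{\text{nc}}^{\text{non-CM}}=0$, so that only $D_{\text{cusp}}$ and $D_{\text{nc}}^{\mathrm{CM}}$ remain, giving exactly a sum of cusps and CM points. I expect this last step to be the main obstacle: the divisibility $\mathrm{ord}(a)\mid \deg D_{\text{nc}}^{\text{non-CM}}$ together with $d = \deg D_{\text{cusp}} + \deg D_{\text{nc}}^{\mathrm{CM}} + \deg D_{\text{nc}}^{\text{non-CM}}$ is not by itself sufficient, and one has to extract additional divisibility information about $\deg D_{\text{cusp}}$ and $\deg D_{\text{nc}}^{\mathrm{CM}}$ modulo $\mathrm{ord}(a)$, using the explicit $\diamondop a$-orbit structure on cusps provided by \Cref{lem:special_cusp} and the controlled automorphism groups of CM elliptic curves, before the contradiction with $\mathrm{ord}(a)\nmid d$ can force the non-CM non-cuspidal contribution to vanish.
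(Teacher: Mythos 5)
Your setup (splitting off the $\Q$-rational cuspidal part, applying \Cref{prop:hecke_gon} to the non-cuspidal part, and observing that a non-cuspidal non-CM point has trivial stabilizer in $\langle \diamondop{a}\rangle$ because $\Aut(E)=\{\pm 1\}\subseteq H$, so that its orbit has length exactly $\mathrm{ord}(a)$) is exactly the intended mechanism. But the proof is not finished: you stop at the divisibility $\mathrm{ord}(a)\mid \deg D_{\mathrm{nc}}^{\text{non-CM}}$ and correctly observe that this, together with $\mathrm{ord}(a)\nmid d$, does not force $D_{\mathrm{nc}}^{\text{non-CM}}=0$. That is a genuine gap, and the extra divisibility information you hope to extract (constraints on $\deg D_{\mathrm{cusp}}$ and $\deg D_{\mathrm{nc}}^{\mathrm{CM}}$ modulo $\mathrm{ord}(a)$, via \Cref{lem:special_cusp} or CM automorphisms) is not there in general: nothing in \Cref{prop:hecke_gon} prevents a degree-$d$ divisor consisting of one free $\diamondop{a}$-orbit of $\mathrm{ord}(a)$ non-CM points together with cusps and/or CM points making up the remaining $d-\mathrm{ord}(a)$, as soon as $\mathrm{ord}(a)\le d$ (e.g. $\mathrm{ord}(a)=3$, $d=4$). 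So the statement in this full generality — arbitrary points of $X_H^{(d)}(\Q)$, hypothesis only $\mathrm{ord}(a)\nmid d$ — cannot be reached by this counting, and your instinct that the last step is the real obstacle is correct.

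What closes the argument in the situations where it is actually used is one of two extra inputs, and your write-up has neither. Either one knows $\mathrm{ord}(a)>d$, in which case a free orbit simply does not fit inside a degree-$d$ divisor and $D_{\mathrm{nc}}^{\text{non-CM}}=0$ immediately (this is the regime of \Cref{prop:global_argument}: every $|\diamondop{a}|$ in \Cref{table:DS} exceeds $d=4$); or one restricts to divisors that are the Galois orbit $x^{(d)}$ of a single non-cuspidal point (as in \Cref{thm:d5r1}), so the support contains no cusps and is either entirely CM or entirely non-CM — in the non-CM case the whole divisor is a union of free orbits, giving $\mathrm{ord}(a)\mid d$ and a contradiction, and in the CM case one invokes $d<d_{CM}X_1(n)$. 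Without adding one of these hypotheses (or weakening the conclusion to ``every rational point of $X_H^{(d)}$ has a cusp or a CM point in its support''), the final step you flag cannot be repaired, so as written the proposal does not prove the corollary.
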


\begin{theorem}[{Waterhouse, \cite[Theorem 4.1]{waterhouse69}}] \label{waterhouse}
Let $E$ be an elliptic curve over a finite field $\F_q$, where $q=p^a$. Let $\beta=q+1-\# E(\F_q)$. 
Then $\beta$ satisfies one of the following:
\begin{enumerate}
    \item $(\beta,p)=1$;
    \item $a$ is even and $\beta=\pm 2\sqrt q$;
    \item $a$ is even, $p\not\equiv 1 \pmod 3$ and $\beta=\pm \sqrt q$;
    \item $a$ is odd, $p=2$ or $3$ and $\beta=\pm p^{\frac{a+1}2}$;
    \item $a$ is odd and $\beta=0$;
    \item $a$ is even, $p\not\equiv 1 \pmod 4$ and $\beta=0$.
\end{enumerate}
\end{theorem}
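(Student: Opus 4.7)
The plan is to combine the Hasse--Weil bound with a Newton polygon analysis of the Frobenius, and then to invoke Honda--Tate theory for the existence half. Let $\pi \in \End(E)$ be the $q$-power Frobenius. It satisfies $\pi^2 - \beta\pi + q = 0$ and the Hasse bound gives $|\beta| \le 2\sqrt{q}$. The standard dichotomy (due to Deuring) says that $E$ is ordinary iff $p \nmid \beta$, and supersingular iff $p \mid \beta$. The ordinary case is precisely (1), so I would immediately reduce to the supersingular case and classify which $\beta$ with $p \mid \beta$ and $|\beta| \le 2p^{a/2}$ actually arise.

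Next, I would use the Newton polygon of $x^2-\beta x + q$ viewed over $\Q_p$: in the supersingular case, both slopes equal $a/2$, which forces $v_p(\beta) \ge a/2$. Combining $v_p(\beta)\ge a/2$ with $|\beta|\le 2p^{a/2}$ gives the following finite list of numerical possibilities:
\begin{itemize}
    \item If $a$ is even, then $\beta \in \{0,\ \pm p^{a/2},\ \pm 2p^{a/2}\}$.
    \item If $a$ is odd, then $v_p(\beta) \ge (a+1)/2$, so either $\beta = 0$ or $|\beta| \ge p^{(a+1)/2}$; the latter is compatible with $|\beta| \le 2p^{a/2}$ only when $\sqrt{p} \le 2$, i.e., $p \in \{2,3\}$, in which case $\beta = \pm p^{(a+1)/2}$.
\end{itemize}
This already produces the list of candidate $\beta$ appearing in (2)--(6), modulo the congruence restrictions on $p$ in (3) and (6).

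To pin down those congruences, I would pass to the endomorphism algebra. For a supersingular $E$ over $\overline{\F}_p$, $\End^0(E)$ is the quaternion algebra $B_{p,\infty}$ ramified exactly at $p$ and $\infty$. The characteristic polynomial of $\pi$ forces an embedding of the imaginary quadratic order $\Z[\pi]$ into (the $\F_q$-endomorphism algebra inside) $B_{p,\infty}$. For the boundary values $\beta = \pm p^{a/2}$ (case (3)) and $\beta = 0$ with $a$ even (case (6)) this order is $\Z[\zeta_3]$ and $\Z[i]$ respectively, and their embeddability into $B_{p,\infty}$ is controlled by the splitting behaviour of $p$ in $\Q(\sqrt{-3})$ and $\Q(i)$: the embedding exists iff $p$ does not split, i.e., $p \not\equiv 1 \pmod 3$ respectively $p \not\equiv 1 \pmod 4$. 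This gives the necessity of the congruence conditions.

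Finally, for existence (i.e., that every $\beta$ on the list is actually realised by some $E/\F_q$), I would invoke the Honda--Tate theorem: isogeny classes of simple abelian varieties over $\F_q$ correspond to Galois orbits of Weil $q$-numbers, and a direct check shows each $\beta$ above produces a valid Weil $q$-number with minimal polynomial giving an elliptic curve. Existence in the ordinary case alternatively follows from Deuring's lifting theorem. The main obstacle is this existence direction, which genuinely requires the Honda--Tate machinery; the necessity direction, which is what gets used in the rest of the paper, follows from the elementary Newton polygon and quaternion-order arguments sketched above.
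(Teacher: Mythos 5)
The paper itself gives no proof of this statement: it is quoted directly from Waterhouse with a citation, and only its necessity direction is used later (in \Cref{cor:curves_over_fp}), so the real comparison is with Waterhouse's original argument rather than with anything in the text. Your sketch is correct and is the standard streamlined route for the necessity half: the Hasse bound plus the ordinary/supersingular dichotomy isolates case (1); in the supersingular case the Newton polygon of $x^2-\beta x+q$ forces $v_p(\beta)\ge a/2$, and your enumeration of the resulting $\beta$ (including the observation that for $a$ odd a nonzero $\beta$ can only survive when $p\in\{2,3\}$) is complete; the congruence conditions in (3) and (6) then follow because Frobenius generates $\Q(\sqrt{-3})$ resp.\ $\Q(i)$ inside $\End^0(E_{\overline{\F}_p})\cong B_{p,\infty}$, which embeds an imaginary quadratic field only when $p$ does not split in it. Two remarks. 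First, the statement as quoted asserts only necessity, so the Honda--Tate existence half you invoke is not needed for it (though Waterhouse proves the stronger ``exactly these $\beta$ occur'' version that way, so on that half your route and his coincide; his necessity argument also runs through the Weil-number/endomorphism-algebra analysis rather than your Newton-polygon shortcut). Second, a small inaccuracy: for $\beta=\pm p^{a/2}$, resp.\ $\beta=0$ with $a$ even, the ring $\Z[\pi]$ is not $\Z[\zeta_3]$, resp.\ $\Z[i]$, but an order of conductor $p^{a/2}$ in that ring; this is harmless because your argument only needs the field-level local-global embedding criterion, not an order-level one, but the phrasing should be corrected.
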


\begin{corollary} \label{cor:curves_over_fp}

    \begin{enumerate}
        \item Suppose $mn$ is odd and $mn>25$. Then $X_1(m,n)$ has no non-cuspidal points over $\F_{2^d}$ for $d \leq 4.$

       \item Let 
        $$S_3=\left\{67,70,76,79,85,88,94,97\right\}.$$
        Suppose  $m \mid n$, $3\nmid mn$ and either $mn>100$, $mn\in S_3$, or $mn$ has no multiple $k$ such that $64\leq k\leq 100$. Then $X_1(m,n)$ has no non-cuspidal points over $\F_{3^d}$ for $d \leq 4.$
    \end{enumerate}
\end{corollary}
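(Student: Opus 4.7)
The plan is to use the moduli interpretation: a non-cuspidal $\F_{p^d}$-point of $X_1(m,n)$ corresponds to an elliptic curve $E/\F_{p^d}$ equipped with points $P,Q\in E(\F_{p^d})$ generating a subgroup isomorphic to $\tg{m}{n}$, so in particular $mn\mid\#E(\F_{p^d})$. The whole proof reduces to ruling this out using the Hasse bound together with \Cref{waterhouse}; the hypothesis that $\gcd(p,n)=1$ ensures that $X_1(m,n)$ actually has good reduction at $p$, so the moduli interpretation is available.

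For part (1), no Waterhouse input is needed. The Hasse bound gives $\#E(\F_{2^d})\le(\sqrt{2^d}+1)^2$, which evaluates to $5,9,14,25$ for $d=1,2,3,4$. If $mn>25$, then $mn$ exceeds every possible $\#E(\F_{2^d})$ for $d\le 4$, so the divisibility $mn\mid\#E(\F_{2^d})$ fails immediately.

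For part (2), the Hasse bound at $q=3^d$ yields $\#E(\F_{3^d})\le 7,16,38,100$ together with $\#E(\F_{81})\ge 64$, so the cases $d\le 3$ are dispatched as soon as $mn>38$. This is automatic if $mn>100$ or $mn\in S_3\subseteq[64,100]$; under the third hypothesis I would first observe by a short arithmetic check (the interval $[64/mn,100/mn]$ has length $\ge 1$ for $mn\le 36$, and $mn\in[37,50]$ is handled by inspection) that ``no multiple in $[64,100]$'' forces $mn\ge 51$, again giving $mn>38$. The heart of the matter is $d=4$: here I would apply \Cref{waterhouse} at $q=81=3^4$, with $a=4$ even and $p=3$. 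The admissible traces $\beta=82-\#E(\F_{81})$ are exactly those with $\gcd(\beta,3)=1$ and $|\beta|\le 17$, together with $\beta\in\{0,\pm 9,\pm 18\}$, the last two coming from the Waterhouse clauses using $p\not\equiv 1\pmod 3$ and $p\not\equiv 1\pmod 4$. Enumerating the resulting $\#E(\F_{81})$-values and taking their complement in $[64,100]$ should produce precisely $S_3$. Granted that, the three hypotheses conclude at once: $mn>100$ exceeds $\#E(\F_{81})$; if $mn\in S_3$ then $\#E(\F_{81})\ne mn$ by the enumeration and no other multiple of $mn$ lies in $[64,100]$ since $2mn\ge 128$; and ``no multiple in $[64,100]$'' directly forbids $mn\mid\#E(\F_{81})$. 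The only step that requires care is the Waterhouse enumeration identifying $S_3$ as the exact set of gaps among possible $\F_{81}$-point counts; everything else is a short arithmetic verification.
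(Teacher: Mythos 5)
Your proposal is correct and follows essentially the same route as the paper, whose proof simply states that the corollary follows directly from \Cref{waterhouse}: you spell out the intended computation, correctly using the Hasse bound alone for part (1) and the Waterhouse trace classification at $q=81$ (where the excluded traces $\beta\in\{\pm3,\pm6,\pm12,\pm15\}$ yield exactly the gap set $S_3$) for part (2), plus the small arithmetic observation that the third hypothesis forces $mn\ge 51>38$ so the cases $d\le 3$ are automatic.
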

\begin{proof}
    This follows directly from \Cref{waterhouse}.
\end{proof}

\section{The Hecke sieve for the rank 0 cases} \label{sec:rank0}
Let $E$ be an elliptic curve over a number field $K$ of degree $d$ and $p \in \Z$ be a suitable prime. Our main approach is to split the problem up into cases depending on the reduction of $E$ above the different primes of $K$ above $p$. The argument exposition is tailored to the case in which $\rk J_1(m,n)(\Q(\zeta_m))=0$, but it could be generalized to $\rk J_1(m,n)(\Q(\zeta_m))>0$. We first prove \Cref{prop_main}, which is a modification of \cite[Theorem 1]{bruin_najman2016}, and which allows us to rule out the cases where $E$ has bad reduction at all primes above $p$; which is automatically satisfied if $Y_1(m,n)(\F_{p^d})=\emptyset$ for $1\leq d \leq 4.$  

To deal with the case in which $E$ has good reduction at least one prime of $K$ over $p$ we use a method that we call the \textit{Hecke sieve} (\Cref{prop:hecke_sieve,cor:bad_reduction}). The Hecke sieve works by studying the point $x^{(d)} \in X_1(m,n)^{(d)}(\Q(\zeta_m))$ corresponding to $\tg{m}{n} \hookrightarrow E(K)$ and using Hecke operators to rule out the different possibilities for $x^{(d)}_{\F_p}$ that have at least one non-cuspidal place in their support. The Hecke sieve has some similarities to the Mordell-Weil sieve, but has the advantage that one does not need to calculate any generators of the Mordell-Weil group $J_1(m,n)(\Q(\zeta_m))$.

\begin{proposition}\label{prop_main}
  Let $m\mid n$ be positive integers such that $mn>4$, $p$ be a prime not dividing $mn$, and $d$ an integer. Suppose $\rk J_1(m,n)(\Q(\zeta_m))=0$ and suppose one of the following is true:
  
\begin{itemize}
    \item[a)] If $p>2$ then $\gon_{\Q(\zeta_m)} X_1(m,n)>d$.

    \item[b)] If $p=2$ then either \begin{itemize}
        \item[i)]  $\gon_{\Q(\zeta_m)} X_1(m,n)>2d$ or
        \item [ii)] $\gon_{\Q(\zeta_m)} X_1(m,n)>d$, and there is a field $L \supseteq \Q(\zeta_m)$ and a prime $\wp''$ of $L$ above $p$ such that every cuspidal $\F_{\wp''}$-rational divisor on $X_1(m,n)$ lifts to a $L$-rational cuspidal divisor on $X_1(m,n)$ and reduction mod $\wp''$ is injective on $J_1(m,n)(L)[2]$. 
    \end{itemize}
\end{itemize}  
  Then every elliptic curve $E$ over a degree $d$ extension $K$ of $\Q(\zeta_m)$ with torsion $\tg{m}{n}\subseteq E(K)$ has good reduction at least one prime of $K$ above $p$.
  

\end{proposition}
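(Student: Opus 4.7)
My plan is to argue by contradiction. Suppose some elliptic curve $E$ over a degree-$d$ extension $K/\Q(\zeta_m)$ with $\tg{m}{n}\subseteq E(K)$ has bad reduction at every prime of $K$ above $p$. The moduli triple $(E,P,Q)$ gives a closed point of $Y_1(m,n)_{\Q(\zeta_m)}$ of degree dividing $d$; by adding enough copies of the $\Q(\zeta_m)$-rational cusp $c_0$ of \Cref{lem:special_cusp}, I view it as an effective $\Q(\zeta_m)$-rational divisor $x^{(d)}$ on $X_1(m,n)$ of degree exactly $d$ with at least one non-cuspidal point in its support. Because $p\nmid mn$, the integral model $X_1(m,n)_{\Z[\zeta_m,1/n]}$ is smooth and its cuspidal subscheme $\mathcal C$ is finite étale over the base. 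Pick a prime $\wp$ of $\Q(\zeta_m)$ above $p$; since $p\nmid m$, it is unramified. The specialisation $\bar x^{(d)}$ of (the Zariski closure of) $x^{(d)}$ at $\wp$ is an effective $\F_\wp$-rational divisor of degree $d$, and the bad-reduction assumption forces $\bar x^{(d)}$ to be supported on $\mathcal C_{\F_\wp}$.

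Next, set $\tau:=[x^{(d)}-dc_0]\in J_1(m,n)(\Q(\zeta_m))$. The rank hypothesis makes $\tau$ torsion, and the previous paragraph shows $\bar\tau$ lies in the cuspidal subgroup of $J_1(m,n)_{\F_\wp}$. Étaleness of $\mathcal C$ over $\Z[\zeta_m,1/n]$ yields a canonical Galois-equivariant identification of the cuspidal subgroups of $J_1(m,n)$ in characteristics $0$ and $p$, so that combined with injectivity of reduction on torsion, $\tau$ itself sits inside the cuspidal subgroup defined over $\Q(\zeta_m)$, and in fact the effective cuspidal representative $\bar x^{(d)}$ of the class $\bar\tau+d\bar c_0$ lifts along this identification to an effective cuspidal divisor $D'$ of degree $d$ over $\Q(\zeta_m)$ with $[D']=[x^{(d)}]$. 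In case (a), $p>2$ together with unramifiedness of $\wp$ supply the required torsion injectivity. In case (b)(ii), the same argument is run over the auxiliary field $L$: the hypotheses at $\wp''$ furnish both the injectivity on $2$-torsion and the lifting of $\F_{\wp''}$-rational cuspidal divisors to $L$ that were previously automatic. In case (b)(i), I instead work with $2\tau$ and the effective divisor $2x^{(d)}$ of degree $2d$; doubling neutralises the $2$-torsion obstruction to reduction injectivity, yielding an effective cuspidal $D''$ of degree $2d$ with $[D'']=[2x^{(d)}]$.

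Finally the gonality bound closes the argument. If $\gon_{\Q(\zeta_m)}X_1(m,n)>d$, any Picard class on $X_1(m,n)$ contains at most one $\Q(\zeta_m)$-rational effective divisor of degree $d$, since the difference of two such would be the divisor of a non-constant function giving a morphism to $\PP^1$ of degree $\leq d$, contradicting the bound. Applied to $x^{(d)}$ and $D'$ in cases (a) and (b)(ii), this forces $x^{(d)}=D'$, contradicting the non-cuspidal support of $x^{(d)}$. The analogous reasoning with $\gon>2d$ applied to $2x^{(d)}$ and $D''$ completes case (b)(i). The main obstacle is the middle step: producing the effective cuspidal representative $D'$ defined over $\Q(\zeta_m)$ (or $L$). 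This relies on étaleness of $\mathcal C$ to match Galois orbits of cusps across characteristics and on injectivity of the torsion reduction map; the possible failure of this injectivity on the $2$-primary part when $p=2$ is precisely what forces the proposition to split into the three sub-cases (a), (b)(i), (b)(ii), the last two paying either a factor of $2$ in the gonality inequality or an additional rationality hypothesis at an auxiliary prime.
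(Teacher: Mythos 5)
Your overall strategy is the paper's: specialize the degree-$d$ divisor $x^{(d)}$ at a prime above $p$, argue the reduction is cuspidal, use $\rk J_1(m,n)(\Q(\zeta_m))=0$ plus Manin--Drinfeld to see $[x^{(d)}-C]$ is torsion, invoke injectivity of reduction on torsion (with the Parent/$2$-torsion correction when $p=2$), and contradict the gonality. However, there are two genuine gaps. First, ``bad reduction at every prime of $K$ above $p$'' does \emph{not} force $\bar{x}^{(d)}$ to be supported on the cusps: additive reduction with potentially good reduction makes the moduli point reduce to a \emph{non-cuspidal} point of $Y_1(m,n)$ over the residue field, and your argument then produces no contradiction at all. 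The paper first rules out additive reduction altogether: the prime-to-$p$ torsion $\tg{m}{n}$ injects into the special fibre of the N\'eron model, whose group of points has order $a\cdot\#\F_{\wp'}$ with $a\le 4$, and since $\gcd(mn,\#\F_{\wp'})=1$ and $mn>4$ no subgroup of order $mn$ can embed. Note that you never use the hypothesis $mn>4$, which is precisely what this step needs; after excluding additive reduction the trichotomy is ``good at some prime (nothing to prove) or multiplicative at all primes (cuspidal reduction)'', and only then does your second paragraph get off the ground.

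Second, your middle step claims a ``canonical Galois-equivariant identification'' of cuspidal subgroups across characteristics producing an effective cuspidal lift $D'$ defined over $\Q(\zeta_m)$ with $[D']=[x^{(d)}]$. Reduction of cusps is equivariant only for the decomposition group at the chosen prime, so an $\F_{\wp'}$-rational cuspidal divisor lifts in general only to a divisor rational over the corresponding decomposition field inside $\Q(\zeta_n)$, not over $\Q(\zeta_m)$; indeed, if your identification argument were valid, the explicit lifting hypothesis in case b.ii) (that $\F_{\wp''}$-rational cuspidal divisors lift to $L$-rational ones) would be superfluous, and this hypothesis is genuinely needed in the applications (e.g.\ $X_1(27)$, $X_1(3,15)$). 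This matters for your endgame: with $D'$ only rational over $\Q(\zeta_n)$, the ``two effective divisors in one class'' argument yields a low-degree function over $\Q(\zeta_n)$ and hence only contradicts $\gon_{\Q(\zeta_n)}$, which does not follow from $\gon_{\Q(\zeta_m)}X_1(m,n)>d$ since gonality can drop under extension. The paper circumvents this by taking $C$ merely $\Q(\zeta_n)$-rational, running the torsion-injectivity argument over $\Q(\zeta_n)$ (Katz for $p>2$, Parent's exponent-$2$ kernel for $p=2$), and then using that $\dim H^0(X_1(m,n),\mathcal{O}(x^{(d)}))$ is invariant under extension of the base field --- the bundle $\mathcal{O}(x^{(d)})$ \emph{is} defined over $\Q(\zeta_m)$ --- to produce a degree $\le d$ (resp.\ $\le 2d$ in case b.i)) map to $\PP^1$ over $\Q(\zeta_m)$ and contradict $\gon_{\Q(\zeta_m)}$. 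You need either this base-change step or a correct justification of $\Q(\zeta_m)$-rationality (and, in case b.ii), of $L$-rationality followed by the same transfer) for your proof to close.
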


\begin{proof}
    Let $\psi:\tg{m}{n} \hookrightarrow E(K)$ denote the inclusion. Fix a prime $\wp$ of $\Q(\zeta_n)$ above $p$ and above a prime $\wp'$ of $\Q(\zeta_m)$; we will denote by $\widetilde o$ the reduction of an object $o$ modulo $\wp$.  
    Let $x\in Y_1(m,n)(K)$ be a point representing $E$ together with the corresponding level structure $\psi$. Denote the corresponding points $x^{(d)}$ and $\tilde{x}^{(d)}$ on $X_1^{(d)}(m,n)(\Q(\zeta_m))$ and $ X_1^{(d)}(m,n)(\F_{\wp'})$ respectively.
    
     We first show that $E$ cannot have additive reduction at $\wp'.$ Assume for contradiction that $E$ has additive reduction modulo $\wp'.$ Then the identity component $E_{\F_{\wp'}}^0$ is of order $\#\F_{\wp'}$, a power of $p$, and $a:=[E_{\F_{\wp'}}:E_{\F_{\wp'}}^0]\leq 4$, so $\#E_{\F_{\wp'}}=a\cdot \#\F_{\wp'}$. The reduction mod $\wp'$ is injective on $\tg{m}{n}\subseteq E(K)[n]$, but $mn$ does not divide $a\cdot\#\F_{\wp'}$, so this is a contradiction. 

    If $E$ has good reduction modulo some prime of $K$ above $p$, there is nothing to prove. So $E$ has multiplicative reduction at all primes of $K$ above $p$. Then $\tilde{x}^{(d)}$ has to be a sum of reductions modulo $\wp$ of $\Q(\zeta_n)$-rational cusps, i.e., $\tilde{x}^{(d)}=\widetilde C$ for some effective cuspidal divisor $C \in X_1^{(d)}(m,n)(\Q(\zeta_n))$.

    Additionally let $c_0 \in X_1(m,n)(\Q(\zeta_m))$ be the cusp whose moduli interpretation is that of a N\'eron $n$-gon and $\tg{m}{n} \to \mathbb G_m(\Q(\zeta_m)) \times \Z/n\Z$ is given by $(a,b) \mapsto (\zeta_m^a,b)$.

    We have
    $$[x^{(d)}-C]=[x^{(d)}-dc_0]+[dc_0 -C].$$
    Since $[x^{(d)}-dc_0]\in J_1(n)(\Q(\zeta_m))=J_1(n)(\Q(\zeta_m))_{\tor}$ and since $dc_0 -C$ is a cuspidal divisor, by Manin-Drinfeld $[dc_0 -C]$ is torsion, so we conclude that $[x^{(d)}-C]\in J_1(n)(\Q(\zeta_n))_{\tor}$.
    
    Suppose first $p>2$. Then reduction mod p is injective on $J_1(n)(\Q(\zeta_m))_{tors}$ by \cite[Appendix]{katz81}, since $p$ is unramified in $\Q(\zeta_n)$, so it follows that $[x^{(d)}-C]=0$. Hence we have 
    $$\dim_{\Q(\zeta_m)} H^0(X_1(n)_{\Q(\zeta_m)}, \mathcal O(x^{(d)}))=\dim_{\Q(\zeta_n)} H^0(X_1(n)_{\Q(\zeta_n)}, \mathcal O(x^{(d)}))\geq 2.$$
    This inequality contradicts $\gon_{\Q(\zeta_m)} X_1(n)> d$, completing the proof if a) is satisfied.

    Suppose now that $p=2$ and $\gon_{\Q(\zeta_m)} X_1(n)> 2d$. By \cite[Prop 2.4]{Parent00} and the fact that $p$ is unramified in $\Q(\zeta_n)$, the kernel of the reduction mod $2$ of $J_1(n)(\Q(\zeta_n))$ is either trivial or a group of exponent $2$.
    
 Hence it follows that since $[\tilde{x}^{(d)} - \widetilde{C}]=0$, the divisor class $[x^{(d)}-C]\in J_1(n)(\Q(\zeta_n))$ has to be contained in $J_1(n)(\Q(\zeta_n))[2]$. Hence $2[x^{(d)}-C]=0$, so by the same argument as above, we arrive at a contradiction with $\gon_{\Q(\zeta_m)} X_1(n)> 2d$, completing the proof if b.i)  is satisfied.

    Suppose finally that $p=2$ and that the assumptions of b.ii) are satisfied. By assumption, we can choose $C$ to be $L$-rational, so $[x^{(d)}- C]$ is a $L$-rational divisor as well. Since $\tilde{x}^{(d)}=\widetilde C$ it follows that $[\tilde{x}^{(d)}-\widetilde C]=0$. By the injectivity of the reduction mod $\wp''$ on $J_1(m,n)(L)_\tor$ (which follows from \cite[Appendix]{katz81} and our assumptions), it follows that  $[x^{(d)}- C]=0$. By the same argument as above, we arrive at a contradiction with $\gon_{\Q(\zeta_m)} X_1(n)> d$, completing the proof if b.ii) is satisfied.    
\end{proof}

The following lemma will be useful when applying \Cref{prop_main}.
\begin{lemma}\label{lem:main}
Let $m|n$ be integers and $p$ a prime not dividing $n$, and let $E$ be an elliptic curve over a number field $K$ of degree $d$ such that $\tg{m}{n}\subseteq E(K)$. Suppose that for all $d'\leq d$ there does not exist an elliptic curve $E'/\F_{p^{d'}}$ with a subgroup of $E'(\F_{p^{d'}})$ isomorphic to $\tg{m}{n}$. Then $E$ does not have good reduction at any prime of $K$ above $p$.
\end{lemma}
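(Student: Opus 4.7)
The plan is a short reduction-theoretic argument. Assume for contradiction that $E$ has good reduction at some prime $\wp$ of $K$ above $p$, and let $\widetilde{E}$ denote the reduction, which is an elliptic curve over the residue field $k(\wp) = \F_{p^{d'}}$ where $d' = f(\wp/p)$ is the residue degree. Since $d' \le [K:\Q] = d$, the hypothesis applies at this value of $d'$.

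The key input is that reduction modulo $\wp$ is injective on the prime-to-$p$ torsion of $E(K)$ (this is standard; it follows from the fact that the kernel of reduction on the formal group is a pro-$p$ group when $E$ has good reduction). Since $p \nmid n$ by assumption, the subgroup $\tg{m}{n} \subseteq E(K)[n] \subseteq E(K)_{\tor}$ has order coprime to $p$, so reduction mod $\wp$ embeds it into $\widetilde{E}(\F_{p^{d'}})$. This produces an elliptic curve $E' := \widetilde{E}$ over $\F_{p^{d'}}$ with a subgroup of $E'(\F_{p^{d'}}) $ isomorphic to $\tg{m}{n}$, contradicting the hypothesis. Hence $E$ has bad reduction at every prime of $K$ above $p$.

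There is essentially no obstacle here; the only thing to be careful about is invoking injectivity of reduction on prime-to-$p$ torsion (which requires the hypothesis $p \nmid n$) and noting that the residue degree of any prime of $K$ above $p$ is at most $d = [K:\Q]$, so the hypothesis applied over $d' \le d$ suffices.
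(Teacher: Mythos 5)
Your argument is correct and is exactly the paper's (one-line) proof: injectivity of reduction modulo $\wp$ on the prime-to-$p$ torsion, combined with the observation that the residue degree $d'$ of any prime above $p$ satisfies $d'\leq d$, immediately yields the contradiction. You have simply spelled out the details the paper leaves implicit.
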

\begin{proof}
    This follows immediately from the fact that reduction modulo a prime above $p$ is injective on all the torsion of order coprime to $p$.
\end{proof}

\begin{lemma}\label{lem:cusp_action}
Let $m=1$ or $2$ and $m|n$, $q$ a prime not dividing $2n$. Suppose $\rk J_1(m,n)(\Q)=0$. 
Then for any $x^{(d)}\in X_1(m,n)^{(d)}$ we have $[A_q(x^{(d)})]=0$.
\end{lemma}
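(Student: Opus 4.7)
The plan is to reduce to the special cusp $c_0$ constructed in \Cref{lem:special_cusp} and then exploit the rank hypothesis via \Cref{prop:kill_tor}. Interpreting $x^{(d)} \in X_1(m,n)^{(d)}$ as a $\Q$-rational effective divisor of degree $d$ (the relevant setting since the hypothesis concerns $\rk J_1(m,n)(\Q)$), the goal is to show that $[A_q(x^{(d)})] = 0$ in $J_1(m,n)(\Q)$.

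First I would invoke the Mordell--Weil theorem together with the hypothesis $\rk J_1(m,n)(\Q) = 0$ to conclude that $J_1(m,n)(\Q) = J_1(m,n)(\Q)_{\tor}$ is a finite abelian group. Then I would apply \Cref{lem:special_cusp} to obtain the $\Q$-rational cusp $c_0$ with $A_q(c_0) = 0$ as a divisor (and hence as a class in $J_1(m,n)(\Q)$). This cusp provides a canonical $\Q$-rational base point of degree $1$ that is annihilated by $A_q$.

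Next I would form the divisor class $[x^{(d)} - dc_0] \in J_1(m,n)(\Q)$, which lies in the torsion subgroup by the first step. Applying \Cref{prop:kill_tor} with the base field $K = \Q$ (every rational prime trivially splits completely in $\Q$, so the hypothesis of that proposition is automatic for any $q \nmid 2n$) yields
\[
A_q\bigl([x^{(d)} - dc_0]\bigr) = 0.
\]
Combining this with $[A_q(dc_0)] = d\,[A_q(c_0)] = 0$ from \Cref{lem:special_cusp} gives
\[
[A_q(x^{(d)})] \;=\; A_q\bigl([x^{(d)} - dc_0]\bigr) + d\,[A_q(c_0)] \;=\; 0,
\]
which is the desired conclusion.

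There is essentially no hard step here: the lemma is a short synthesis of three prior facts (Mordell--Weil, the special-cusp calculation, and the torsion-killing property of $A_q$). The only point one needs to take care of is ensuring that $A_q$, viewed as a correspondence on $X_1(m,n)$, induces the endomorphism of $J_1(m,n)$ used in \Cref{prop:kill_tor}, so that passing between the divisor-level identity $A_q(c_0) = 0$ and the Jacobian-level identity $A_q([x^{(d)}]) = 0$ is legitimate; this is immediate from the standard construction of Hecke operators via pullback--pushforward through the correspondence $X_{0,1}(q,m,n)$ used in the proof of \Cref{lem:special_cusp}.
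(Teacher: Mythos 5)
Your proposal is correct and follows essentially the same route as the paper: reduce to the special cusp of \Cref{lem:special_cusp}, note that $[x^{(d)}-dc_0]$ is torsion since $\rk J_1(m,n)(\Q)=0$, and apply \Cref{prop:kill_tor} with $K=\Q$ to kill it under $A_q$. The paper's proof is the same three-line argument, so nothing further is needed.
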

\begin{proof}
    Let $C_0\in X_1(m,n)$ be the cusp such that $A_q(C_0)=0$; it exists by \Cref{lem:special_cusp}. Let $y=[x^{(d)}-dC_0]\in J_1(m,n)(\Q)$. By \Cref{prop:kill_tor} we have 
    $$0=[A_q(y)]=[A_q(x^{(d)})]-[A_q(C_0)]=[A_q(x^{(d)})].$$
\end{proof}

\begin{proposition}[Hecke Sieve] \label{prop:hecke_sieve} Let $m=1$ or $2$ and $m|n$, $p$ a prime not dividing $n$, $q$ a prime with $q\nmid 2pn$ and $1 \leq d'\leq d$ be integers. Assume $\rk J_1(m,n)(\Q)=0$. Suppose that 
\begin{itemize}

    \item[i)] $D\in Y_1(m,n)^{(d')}(\F_p)$ is a non-cuspidal effective divisor of degree $d'$ such that
    $$[A_{q}(D)]\neq 0,$$ and

    \item[ii)] $C \in X_1(m,n)^{(d-d')}(\F_p)$ is a cuspidal effective divisor of degree $d-d'$, such that $[A_{q}(C)]= 0$,
\end{itemize}
where $A_q=T_q-\diamondop{q}q-1$ as in \cref{eq:kill_tor}. Then a point $z \in Y_1(m,n)^{(d)}(\Q)$ cannot reduce to $D+C \in X_1(m,n)^{(d)}(\F_p)$.
\end{proposition}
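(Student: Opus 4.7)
The plan is to argue by contradiction. Suppose such a point $z \in Y_1(m,n)^{(d)}(\Q)$ exists whose reduction modulo $p$ equals $D+C$ as an effective divisor on $X_1(m,n)_{\F_p}$. Since $p \nmid n$, the modular curve $X_1(m,n)$ has good reduction at $p$, and since $q \nmid pn$, the Hecke correspondence $T_q$ (and thus $A_q = T_q - \diamondop{q}q - 1$) is defined over $\Z[1/pqn]$, so it commutes with reduction modulo $p$ on the level of divisors and on the Jacobian.

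First I would apply \Cref{lem:cusp_action} to $z$: the assumption $\rk J_1(m,n)(\Q) = 0$ gives $[A_q(z)] = 0$ in $J_1(m,n)(\Q)$. In other words, the divisor $A_q(z)$ on $X_1(m,n)_\Q$ is principal, say $A_q(z) = \ddiv(f)$ for some $f \in \Q(X_1(m,n))^\times$. Using that $X_1(m,n)$ is a smooth proper scheme over $\Z_{(p)}[1/n]$, I can rescale $f$ to have unit content so that it reduces to a nonzero rational function $\bar f$ on $X_1(m,n)_{\F_p}$, whose divisor is exactly the reduction of $A_q(z)$.

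Next, since reduction commutes with $A_q$ on divisors, the reduction of $A_q(z)$ equals $A_q(D+C) = A_q(D) + A_q(C)$. Combining with the previous step yields the linear equivalence
\[
[A_q(D)] + [A_q(C)] = 0 \quad \text{in } J_1(m,n)(\F_p).
\]
Invoking hypothesis (ii), namely $[A_q(C)] = 0$, collapses this to $[A_q(D)] = 0$ in $J_1(m,n)(\F_p)$, which directly contradicts hypothesis (i). This contradiction completes the proof.

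The argument is essentially a one-line combination of \Cref{lem:cusp_action} with the compatibility of Hecke correspondences and reduction, so I do not anticipate a genuine obstacle; the only point requiring a modicum of care is ensuring that the principal divisor $A_q(z)$ on the generic fibre actually reduces to a principal divisor on the special fibre, which follows from good reduction of $X_1(m,n)$ at $p$ (guaranteed by $p \nmid n$) together with properness, so that any defining function can be normalised to reduce to a nonzero function.
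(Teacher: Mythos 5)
Your proposal is correct and follows essentially the same route as the paper: apply \Cref{lem:cusp_action} to get $[A_q(z)]=0$, use compatibility of $A_q$ with reduction modulo $p$ (and that $0$ reduces to $0$ in the Jacobian) to deduce $[A_q(D)]+[A_q(C)]=0$ in $J_1(m,n)(\F_p)$, and then contradict hypotheses i) and ii). Your extra care with rescaling the principal function $f$ is just a more explicit version of the fact that reduction is a homomorphism on the Jacobian, which the paper invokes directly.
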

\begin{proof} We have that 
$$[A_q(z)]_{\F_p}=[A_q(C)]+[A_q(D)]\neq 0,$$
while on the other hand 
$$[A_q(z)]=0$$
by \Cref{lem:cusp_action}. This is obviously a contradiction since $0\in J_1(m,n)(\Q)$ does not reduce modulo $p$ to a non-zero point in $J_1(m,n)(\F_p)$. 
\end{proof}
\begin{remark}
The only essential property of $A_q$ used in the proof of \Cref{prop:hecke_sieve} is that when $\rk J_1(m,n)(\Q)=0$ then $A_q(J_1(m,n)(\Q))=0$. In the case $\rk J_1(m,n)(\Q)>0$ one can still use the Hecke sieve by replacing $A_q$ with another Hecke operator $t$ such that $t(J_1(m,n)(\Q))=0$
\end{remark}

\begin{corollary} \label{cor:bad_reduction} Let $m=1$ or $2$ and $m|n$, and $p$ a prime not dividing $n$. Suppose $\rk J_1(m,n)(\Q)=0$. Suppose that
\begin{itemize}

    \item[i)] for all $1 \leq d'\leq d$ and all $D\in Y_1(m,n)^{(d')}(\F_p),$ there exists a prime $q_D\nmid 2pn$ such that  
    $$[A_{q_D}(D)]\neq 0,$$
    and

    \item[ii)] for all $d'\leq d$ such that $ Y_1(m,n)^{(d')}(\F_p)\neq \emptyset$ and all effective cuspidal divisors $C$ over $\F_p$ of degree $d-d'$, we have  $[A_{q_D}(C)]= 0$ for all $q_D$ used in step i) with $\deg D=d'$,
\end{itemize}
where $A_q=T_q-\diamondop{q}q-1$. Then every elliptic curve $E$ over a degree $d$ number field $K$ with torsion $\tg{m}{n}\subseteq E(K)$ has bad reduction at all primes of $K$ above $p$.
\end{corollary}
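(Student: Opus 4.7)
The plan is to derive the corollary as a direct application of the Hecke sieve (\Cref{prop:hecke_sieve}), arguing by contradiction. I would assume there exists an elliptic curve $E/K$ with $\tg{m}{n} \hookrightarrow E(K)$ that has good reduction at some prime $\wp_0$ of $K$ above $p$. Let $x \in Y_1(m,n)(K)$ be the moduli point corresponding to $E$ together with its level structure, and let $x^{(d)} \in Y_1(m,n)^{(d)}(\Q)$ denote the associated Galois-stable effective degree $d$ divisor on $X_1(m,n)$.

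Next I would pass to reductions modulo $p$. Since $p \nmid n$ and $X_1(m,n)$ is smooth and proper over $\Z[1/n]$, the valuative criterion extends $x^{(d)}$ uniquely to an integral section and produces a well-defined reduction $z := x^{(d)}_{\F_p} \in X_1(m,n)^{(d)}(\F_p)$. Decompose $z = D + C$, with $D$ supported on non-cuspidal closed points of $X_1(m,n)_{\F_p}$ and $C$ purely cuspidal, and set $d' := \deg D$. Because $E$ has good reduction at $\wp_0$ and $\tg{m}{n}$ has order coprime to $p$, the reduced level structure injects into the reduction of $E$, so the image of $x$ at $\wp_0$ is a non-cuspidal point of $X_1(m,n)(\F_{\wp_0})$; this contributes at least $1$ to $d'$, so $1 \le d' \le d$ and $D \in Y_1(m,n)^{(d')}(\F_p)$.

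The final step is a direct invocation of the Hecke sieve. Hypothesis (i) supplies a prime $q_D \nmid 2pn$ with $[A_{q_D}(D)] \neq 0$, and since $Y_1(m,n)^{(d')}(\F_p) \ni D$ is nonempty, hypothesis (ii) applied to the cuspidal complement $C$ of degree $d - d'$ gives $[A_{q_D}(C)] = 0$. Applying \Cref{prop:hecke_sieve} with $q = q_D$ and the pair $(D, C)$ rules out $x^{(d)}$ reducing to $D + C = z$, contradicting the definition of $z$.

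The only mildly non-formal step is the moduli-theoretic claim that good reduction of $E$ at $\wp_0$ produces a non-cuspidal point in the special fibre. This follows because the Néron model at $\wp_0$ is itself an elliptic curve into which the prime-to-$p$ group $\tg{m}{n}$ injects, so the extended integral section lands in the open part $Y_1(m,n)_{\F_{\wp_0}}$ rather than in the cuspidal boundary (which parametrises Néron polygons, corresponding to multiplicative reduction). Once this is granted, the remainder is a bookkeeping check against the two hypotheses of the Hecke sieve, so I expect no serious obstacle.
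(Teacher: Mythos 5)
Your proposal is correct and follows essentially the same route as the paper: decompose the mod-$p$ reduction of $x^{(d)}$ as a non-cuspidal part $D$ plus a cuspidal part $C$, note that good reduction at a prime above $p$ forces $\deg D \geq 1$ (since the prime-to-$p$ level structure survives reduction), and then invoke \Cref{prop:hecke_sieve} with the $q_D$ supplied by hypotheses i) and ii) to get a contradiction. The extra detail you give on why good reduction yields a non-cuspidal point in the special fibre is the same fact the paper uses implicitly, just spelled out.
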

\begin{proof}
Let $x^{(d)}\in Y_1(m,n)^{(d)}(\Q)$ be the effective degree $d$ divisor corresponding to $E$ such that $\tg{m}{n}\subseteq E(K)$. Then $x^{(d)}_{\F_p}$ can be decomposed as $x^{(d)}_{\F_p}=C+D$ where $C$ is the sum of cusps and $D$ is a sum of non-cusps. If $E$ has good reduction at a prime above $p$, then $\deg D \geq  1$. However, by \Cref{prop:hecke_sieve} this cannot happen since there is no $z \in Y_1(m,n)^{(d)}(\Q)$ specializing to $x^{(d)}_{\F_p}=C+D$.
\end{proof}

\begin{lemma} \label{lem:split_completely_cusp}
     Let $m=1$ or $2$ and $m|n$, $q$ a prime not dividing $2n$. Suppose $\rk J_1(m,n)(\Q)=0$. Let $K$ be the field of definition of a cusp $C$ of $X_1(m,n)$. Assume that $q$ splits completely in $K$. Then 
     $$[A_q(C)]=0.$$
     \end{lemma}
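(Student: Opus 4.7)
The plan is to combine three ingredients: \Cref{lem:special_cusp}, which produces a $\Q$-rational cusp $C_0$ on $X_1(m,n)$ satisfying $A_q(C_0)=0$ as a divisor; \Cref{prop:kill_tor}, which says that $A_q$ annihilates $J_1(m,n)(K)_{\tor}$ whenever $q \nmid 2n$ splits completely in $K$; and the Manin-Drinfeld theorem, which guarantees that the class in the Jacobian of any degree zero divisor supported on cusps is torsion.

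Viewing $C$ as a $K$-point of $X_1(m,n)$ and noting that $C_0 \in X_1(m,n)(\Z[1/n])$ is automatically a $K$-point (since $m \in \{1,2\}$ places the base ring inside $\Q \subseteq K$), I would form the degree zero cuspidal divisor
\[ D := C - C_0 \in \Div^0(X_1(m,n))(K). \]
By Manin-Drinfeld, the class $[D] \in J_1(m,n)(K)$ is torsion, and \Cref{prop:kill_tor} (applicable since $q \nmid 2n$ and $q$ splits completely in $K$) then yields $A_q([D]) = 0$. Combining with $A_q(C_0)=0$ from \Cref{lem:special_cusp}, one obtains
\[ [A_q(C)] = [A_q(D)] + [A_q(C_0)] = 0, \]
as required.

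The argument is essentially an assembly of the three ingredients above; no step is expected to pose any real obstacle. As a side observation, the hypothesis $\rk J_1(m,n)(\Q)=0$ does not appear to be invoked in the argument, so the lemma presumably holds without it and is stated this way only for uniformity with the other results of \Cref{sec:rank0}.
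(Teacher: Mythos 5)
Your proof is correct and is essentially the paper's own argument: the paper likewise combines \Cref{lem:special_cusp}, \Cref{prop:kill_tor} and Manin--Drinfeld, writing $[A_q(C)]=[A_q(C-dC_0)]+[A_q(dC_0)]$ with $d=\deg C$, which over $K$ amounts to your decomposition via $D=C-C_0$. Your side remark is also accurate, since the rank-zero hypothesis is not invoked in the paper's proof either.
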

\begin{proof}
    By \Cref{prop:kill_tor} we have $A_q(J_1(m,n)(K)_\tor)=0$. Let $d=\deg C$ and let $C_0$ be the cusp defined as in \Cref{lem:special_cusp}. We have $[C-dC_0]\in J_1(m,n)(K)_\tor$ by Manin-Drinfeld, so $[A_q(C-dC_0)]=0$. As $[A_q(dC_0)]=0$ by \Cref{lem:special_cusp}, it follows that $[A_q(C)]=0.$
\end{proof}

We are ready to start eliminating possible quartic torsion groups.

\begin{corollary}\label{cor:easy}
    The groups $\torz{n}$ for $n\in\{27, 33,35,39 ,45,49,51,55,119\}$, $\tg{2}{22}$ and $\tg{3}{21}$ are not quartic torsion groups. 
\end{corollary}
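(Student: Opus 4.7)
The plan is to apply \Cref{prop_main} in tandem with \Cref{lem:main}: \Cref{prop_main} will force the hypothetical elliptic curve to have good reduction at some prime above a well-chosen $p$, while \Cref{lem:main} will force bad reduction at every such prime, yielding a contradiction. Fix a target group $\tg{m}{n}$ from the list and suppose for contradiction that there is an elliptic curve $E$ over a quartic field $K$ with $\tg{m}{n}\subseteq E(K)$. By the Weil pairing $K\supseteq\Q(\zeta_m)$, so $K/\Q(\zeta_m)$ has degree $d:=4/[\Q(\zeta_m):\Q]$, which is $4$ in every case except $\tg{3}{21}$, for which $d=2$. I would take the auxiliary prime $p$ to be $2$ in every case except $\tg{2}{22}$, where the natural choice coprime to $mn=44$ is $p=3$.

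To invoke \Cref{prop_main} I would verify three hypotheses. The \emph{rank hypothesis} $\rk J_1(m,n)(\Q(\zeta_m))=0$: for the $\torz{n}$ entries this is a direct LMFDB check, exactly as in the worked example preceding \Cref{sec:hecke}; for $\tg{2}{22}$ and $\tg{3}{21}$ one decomposes $J_1(m,n)$ into isogeny factors $A_{[f]}$, checks each factor has analytic rank $0$, and promotes this to algebraic rank $0$ via Kato's theorem. The \emph{gonality inequality}: for $p=3$ (the case of $\tg{2}{22}$) one needs $\gon_{\Q}X_1(2,22)>d=4$, which Abramovich's bound (\Cref{abramovich}) supplies comfortably; for $p=2$ the stronger option b.i) bound $\gon_\Q X_1(n)>2d=8$ holds by Abramovich whenever $n$ is large enough (certainly for $n\in\{45,49,51,55,119\}$), while for the smaller levels $n\in\{27,33,35,39\}$ I would either cite sharper known $\Q$-gonalities of $X_1(n)$ or use option b.ii), which only demands gonality $>4$ together with a cusp/$2$-torsion check over a suitable cyclotomic extension $L$. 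The \emph{emptiness of $Y_1(m,n)(\F_{p^{d'}})$} for all $d'\le d$: for every $\torz{n}$ in the list and for $\tg{3}{21}$ (where $mn=63$ is odd and larger than $25$) this is \Cref{cor:curves_over_fp}(1) with $p=2$; for $\tg{2}{22}$ with $p=3$ the corollary does not apply directly, but the Hasse bound $|E(\F_{3^{d'}})|\le (3^{d'/2}+1)^2$ leaves only the possibility $|E(\F_{81})|=88$, i.e.\ trace $\beta=-6$, which is excluded by \Cref{waterhouse} since for $q=81$ the allowed traces with $3\mid\beta$ are $\beta\in\{0,\pm 9,\pm 18\}$.

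Combining these, \Cref{lem:main} forces bad reduction at every prime of $K$ above $p$ while \Cref{prop_main} forces good reduction at some prime of $K$ above $p$, giving the required contradiction for each of the eleven groups in the list.

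The main obstacle I anticipate is the gonality inequality in the smallest $p=2$ cases $n\in\{27,33,35,39\}$: Abramovich's bound just barely falls short of the threshold $\gon_\Q X_1(n)>8$, so one must either quote sharper explicit $\Q$-gonality data for $X_1(n)$ or carefully apply option b.ii) of \Cref{prop_main}, which requires identifying a cyclotomic field $L$ realizing every $\F_{\wp''}$-rational cuspidal divisor as an $L$-rational one and verifying injectivity of reduction on $J_1(n)(L)[2]$.
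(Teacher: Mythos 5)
Your overall strategy is exactly the paper's: for each group pick $p$ ($p=2$ everywhere except $p=3$ for $\tg{2}{22}$), use \Cref{lem:main} plus the emptiness of $Y_1(m,n)(\F_{p^{d'}})$ to force bad reduction above $p$, and play this against \Cref{prop_main} (case a) for $p=3$, cases b.i)/b.ii) for $p=2$, with $d=2$ over $\Q(\zeta_3)$ for $\tg{3}{21}$). Your treatment of $\tg{2}{22}$ over $\F_{81}$ via \Cref{waterhouse} (excluding trace $-6$) is in fact more carefully spelled out than the paper's appeal to \Cref{cor:curves_over_fp}.

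Two of your gonality justifications need repair, though. First, Abramovich's bound does \emph{not} give $\gon_\Q X_1(2,22)>4$ "comfortably": the index of $\Gamma_1(2,22)$ is $360$, so \Cref{abramovich} only yields $\gon_\C>\tfrac{325}{2^{15}}\cdot 360\approx 3.6$, i.e.\ $\gon\geq 4$, which is not enough for case a) with $d=4$; here one must quote an explicit gonality result (the paper cites \cite{derickxVH}, and \cite{JeonKimPark06} serves the same purpose elsewhere). Second, among your "small levels", the fallback "cite sharper known $\Q$-gonalities" works for $n=33,35,39$ (gonalities $10,12,14>8$, so b.i) applies, as in the paper), but it fails for $n=27$, whose $\Q$-gonality is $6\leq 8$; there case b.ii) is genuinely forced, and the paper completes it concretely with $L=\Q$: $J_1(27)(\Q)[2]=0$ (so reduction mod $2$ is injective on the $2$-torsion) and $2$ is inert in $\Q(\zeta_{27})$, whence every $\F_2$-rational cuspidal divisor lifts to a $\Q$-rational one. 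Finally, for $\tg{2}{22}$ and $\tg{3}{21}$ the paper simply cites \cite{DerickxSutherland} for $\rk J_1(m,n)(\Q(\zeta_m))=0$; your plan to rederive this via analytic ranks and Kato would, for $\tg{3}{21}$, require the version of Kato's theorem with nontrivial character (rank over $\Q(\zeta_3)$, not just over $\Q$), a point worth making explicit.
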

\begin{proof}
    For all the torsion groups in the statement of the corollary, and the primes $p$ which we will choose for each group, it follows by \Cref{cor:curves_over_fp} that, for all $k\leq 4$, there are no elliptic curves containing that torsion group over $\F_{p^k}$.
    For all $\torz{n}$ we consider, it follows that $\rk J_1(n)(\Q)=0$ by \cite[Theorem 3.1]{Deg3Class}, while $\rk J_1(m,n)(\Q)=0$ follows from \cite[Theorem 4.1]{DerickxSutherland} for the groups $\tg{m}{n}$ with $m>1$ that we consider.
    
    For $\torz{n}$, with $n=33,35,39 ,45,49,51,55,119$ we apply \Cref{lem:main} and \Cref{prop_main} case b.i), using $p=2$; the necessary assumption $\gon_\Q X_1(n)>8$ follows from \cite[Theorem 3]{derickxVH}. 
    
    For $\torz{27}$ we apply \Cref{lem:main} and \Cref{prop_main}, case b.ii) with $L=\Q$ and $p=2$, using the fact that $J(\Q)$ has no $2$-torsion (see \cite[Table2]{Deg3Class}). To show that all $\F_2$-rational cuspidal divisors lift to $\Q$-rational cuspidal divisors, we note that all cusps of $X_1(27)$ are defined over subfields of $\Q(\zeta_{27})$, in which 2 is completely inert. 

    In the case $\tg{2}{22}$ we apply \Cref{lem:main} and \Cref{prop_main} case a) with $p=3$. We have that $\gon_\Q X_1(27)>4$ follows from \cite[Theorem 3]{derickxVH}.

    For $\tg{3}{21}$ we apply \Cref{lem:main} and \Cref{prop_main}, case b.i, with $p=2$, to show that there are no degree $2$ points over $\Q(\zeta_m)$ (and hence no quartic points over $\Q$) on $X_1(3,21)$. We see that $\rk J_1(3,21)(\Q(\zeta_3))=0$ from \cite[Theorem 4.1]{DerickxSutherland} and use \Cref{abramovich} to get
    $$\gon_{\Q(\zeta_m)} X_1(3,21)\geq\gon_{\C} X_1(3,21) \geq 6,$$
    completing this case.


\end{proof}

\begin{lemma}\label{lem:CC} Let $X:=X_1(m,n)$, with $m=1$ or $2$ and $m|n$, let $p$ and $q$ be different primes not dividing $2n$ and $k=1$ or $2$. Suppose $C\in X^{(k)}(\F_p)$ is a cuspidal divisor of degree $k$. Assume that 
\begin{itemize}
    \item[a)] if $k=1$, then $q$ splits completely in the field of definition of all cusps of $X$ that reduce to $\F_p$-rational cusps. 
    \item[b)] if $k=2$, then $q$ splits completely in the field of definition of all cusps of degree $1$ or $2$ on $X_{\F_p}$ which are reductions of cusps of higher degree on $X$.
\end{itemize}
    Then $[A_q(C)]=0.$
\end{lemma}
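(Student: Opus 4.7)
The plan is to reduce to \Cref{lem:split_completely_cusp} by lifting each component of $C$ to a cusp of $X$ over a number field $K$ in which $q$ splits completely, applying that lemma, and then reducing modulo~$p$. To set up the lifting, fix a prime $\wp$ of $\overline{\Q}$ above~$p$. Since $p \nmid 2n$, the cusps of $X$ have smooth reduction, and reduction modulo $\wp$ gives a bijection between the set of $\overline{\Q}$-cusps of $X$ and the set of $\overline{\F}_p$-cusps of $X_{\F_p}$, equivariant for the surjection $D_\wp \twoheadrightarrow \Gal(\overline{\F}_p/\F_p)$ from the decomposition group. Since the field of definition $K(P) \subseteq \Q(\zeta_n)$ of each geometric cusp $P$ is abelian over~$\Q$ and unramified at~$p$, every prime of $K(P)$ above~$p$ shares a single residue degree $f(P)$, and this is also the degree of $\mathrm{red}(P)$ as a closed point of $X_{\F_p}$.

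For $k=1$, one writes $C = \mathrm{red}(P)$ for the unique geometric lift $P$, with $f(P) = 1$, so $p$ splits completely in $K := K(P)$. By hypothesis~(a), $q$ also splits completely in $K$. \Cref{lem:split_completely_cusp} applied to the $K$-rational cusp $P$ yields $[A_q(P)] = 0$ in $J_1(m,n)(K)$, and reducing this identity modulo a degree-one prime $\wp \cap K$ above~$p$ gives $[A_q(C)] = 0$ in $J_1(m,n)(\F_p)$, as desired.

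For $k=2$, split into two cases. If $C$ is supported on $\F_p$-rational cusps (so $C = C_1 + C_2$ or $C = 2C_1$), apply the $k=1$ argument to each summand $C_i$: when the lift of $C_i$ has degree $>1$ on $X$, hypothesis~(b) supplies the splitting of $q$ in its field of definition, and when the lift is $\Q$-rational the splitting is trivial. If instead $C$ is irreducible of degree~$2$, lift one geometric point of $C$ to a $\overline{\Q}$-cusp $P$ with $f(P) = 2$, so the closed cusp of $X$ through $P$ has degree $\geq 2$, and hypothesis~(b) gives that $q$ splits completely in $K := K(P)$. \Cref{lem:split_completely_cusp} then yields $[A_q(P)] = 0$ in $J_1(m,n)(K)$, so reducing modulo $\wp \cap K$ gives $[A_q(\mathrm{red}(P))] = 0$ in $J_1(m,n)(\F_{p^2})$; adding the $\mathrm{Frob}_p$-conjugate recovers $[A_q(C)] = 0$ in $J_1(m,n)(\F_{p^2})$, and since $C$ is $\F_p$-rational this identity already lives in the subgroup $J_1(m,n)(\F_p)$.

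The main point of care is matching geometric lifts to the hypotheses: one needs that the phrase ``reductions of cusps of higher degree on $X$'' in~(b) really covers every geometric lift arising in the $k=2$ analysis (with the trivial $\Q$-rational case handled separately), and one must know that reduction modulo~$\wp$ on the Jacobian is well-defined on classes of the form $[A_q(P)]$---this follows from the smooth reduction of $X$ at~$p$ and the fact that $A_q(P)$ is a degree-zero divisor supported on the (smooth) cuspidal locus.
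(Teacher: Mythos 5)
Your $k=1$ case and the reducible $k=2$ case match the paper's argument, but there is a genuine gap in the irreducible degree-$2$ case, and it is precisely the point you flagged at the end without resolving. Hypothesis (b) only concerns cusps of $X_{\F_p}$ that are reductions of cusps of \emph{strictly higher} degree on $X$; it says nothing when the irreducible degree-$2$ divisor $C$ is the reduction of a closed cusp $c$ of $X$ of degree exactly $2$ (for instance a cusp with quadratic residue field in which $p$ is inert). In that subcase your argument needs $q$ to split completely in $K(P)$, which the hypotheses do not provide --- and the application in \Cref{cor:2_2n} only verifies the splitting condition for lifts of strictly higher degree --- so the appeal to \Cref{lem:split_completely_cusp} breaks down, and as written you have only proved the lemma under a strictly stronger hypothesis than the one stated.

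The paper closes exactly this case by a different mechanism that requires no splitting condition on $q$: when $\deg c = \deg C = 2$, the closed point $c$ is a $\Q$-rational cuspidal divisor, so $[c - 2C_0]$, with $C_0$ the special cusp of \Cref{lem:special_cusp}, is torsion in $J_1(m,n)(\Q)$ by Manin--Drinfeld, hence is killed by $A_q$ by \Cref{prop:kill_tor} applied with $K=\Q$ (where every $q$ splits completely). Since $A_q(C_0)=0$, this gives $[A_q(c)]=0$ over $\Q$, and reducing modulo $p$ yields $[A_q(C)]=0$. Inserting this argument into your irreducible degree-$2$ case, while keeping your treatment of lifts with $\deg c > \deg C$ via hypothesis (b), repairs the proof; the rest of your write-up, including the bookkeeping with residue degrees and the well-definedness of reduction on the relevant divisor classes, is sound and is essentially the paper's route.
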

\begin{proof}
First consider the $k=1$ case. In this case $C$ is just a point, and is the reduction of a cusp $c$ (i.e., $c_{\F_p}=C$) defined over some number field $K$. If $q$ splits completely in $K$, we have $[A_q(c)]=0$ by \Cref{lem:split_completely_cusp} and hence $[A_q(C)]=0$. 

Now consider the $k=2$ case, and let $C\in X^{(2)}(\F_p)$ be a cuspidal divisor. If $C=C_1+C_2$ is reducible then $q$ by assumption splits in the fields of definition of the cusps of $X$ that reduce to $C_1$ and $C_2$, and $[A_q(C_1)]=[A_q(C_2)]=0$ by the same arguments as in the $k=1$ case. 

Let $c$ be an irreducible cuspidal divisor of $X$ such that $C$ is (a factor of) the reduction of $c$. If $\deg c>\deg C$, then by our assumptions $q$ splits in the field of definition of the points in $c$, and we conclude $[A_q(C)]=0$ by the same argumentation as in the $k=1$ case. Assume now $\deg c=\deg C$, (i.e. $c_{\F_p}=C$), and let $C_0$ be the cusp defined in \Cref{lem:special_cusp}. Then $[c-(\deg c) C_0]\in J(X)(\Q)_\tor$, so $[A_q(c)-(\deg c) C_0)]=0$. It follows that $[A_q(c)=0]$ and hence $[A_q(C)=0]$.
   
\end{proof}

We now apply the Hecke sieve to several torsion groups.

\begin{proposition}\label{cor:2_2n}
    The groups $\torz{n}$ for $n=25,26,28,32,24,36,42$, $\tg{2}{20}$ and $\tg{2}{24}$ are not quartic torsion groups.
\end{proposition}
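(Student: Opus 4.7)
The plan is to apply the Hecke sieve machinery systematically to each group in the statement. For each $T$ in the list, I will select a small prime $p$ coprime to $|T|$ and split the hypothetical quartic point $x^{(4)} \in Y_1(m,n)^{(4)}(\Q)$ corresponding to $(E,K)$ into two scenarios: either $E$ has bad reduction at every prime of $K$ above $p$, to be ruled out via \Cref{prop_main}, or $E$ has good reduction at some prime of $K$ above $p$, to be ruled out via \Cref{cor:bad_reduction}. Before starting, I verify $\rk J_1(m,n)(\Q) = 0$ for each modular curve in play: for the cyclic groups this is available from \cite[Theorem 3.1]{Deg3Class}, and for $\tg{2}{20}$ and $\tg{2}{24}$ from \cite[Theorem 4.1]{DerickxSutherland}.

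For the bad-reduction scenario I take $p = 2$ whenever the required gonality bound $\gon_\Q X_1(m,n) > 8$ is available from \cite{derickxVH} (case b.i of \Cref{prop_main}); otherwise I fall back to $p = 3$ and case a of \Cref{prop_main}. Combined with \Cref{lem:main}, which certifies that no elliptic curve over $\F_{p^{d'}}$ for $d' \leq 4$ carries the prescribed torsion (a consequence of \Cref{cor:curves_over_fp} and the Hasse bound), this eliminates the ``all primes bad'' possibility. In the $\torz{32}$ case one might need b.ii) with a small auxiliary field $L$, as was done for $\torz{27}$ in \Cref{cor:easy}, because $2$-torsion of $J_1(32)$ must be controlled.

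The main work is in the good-reduction scenario, handled by \Cref{cor:bad_reduction}. For each $d' \in \{1,2,3,4\}$ I enumerate all effective divisors $D \in Y_1(m,n)^{(d')}(\F_p)$ supported at non-cuspidal places, and for each such $D$ I search for an auxiliary Hecke prime $q_D \nmid 2pn$ with $[A_{q_D}(D)] \neq 0$ in $J_1(m,n)(\F_p)$. Simultaneously, for every effective cuspidal divisor $C$ of complementary degree $4 - d'$ I must verify $[A_{q_D}(C)] = 0$; this is arranged via \Cref{lem:CC} by choosing $q_D$ to split completely in the fields of definition of all cusps of $X_1(m,n)$ that can contribute to $C$. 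The enumeration of $\F_p$-points, the computation of $T_{q_D}$ on them, and the linear-algebra test $[A_{q_D}(D)] \neq 0$ in $J_1(m,n)(\F_p)$ are carried out in \texttt{magma}; the scripts and logs live in the github repository and are cited inline via \github{}{} as per the convention in this paper.

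The hardest step is expected to be the non-cyclic cases $\tg{2}{20}$ and $\tg{2}{24}$: these are precisely the reason the Eichler-Shimura relation had to be extended to $X_1(2,2n)$ in \Cref{sec:hecke}, without which the Hecke sieve would not make sense on these curves. Beyond the theoretical extension, the computations on $X_1(2,24)$ are by far the most expensive (roughly half of the paper's total CPU budget), and care is needed both to enumerate divisors of each degree up to $4$ efficiently and to select $q_D$ so that the splitting condition of \Cref{lem:CC} holds simultaneously for all cuspidal complements. If for some pair $(D,q_D)$ the sieve fails, the backup is to vary $q_D$ over several small primes and test in parallel; since $A_{q_D}$ annihilates everything originating from $J_1(m,n)(\Q)$, any witness $q_D$ suffices to discard the candidate $D$.
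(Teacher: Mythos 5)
Your proposal follows essentially the same route as the paper: fix a single prime $p \nmid \#T$, use \Cref{prop_main} (with \Cref{lem:main}) to force good reduction at some prime of $K$ above $p$, then rule out every possible non-cuspidal part of the reduction of $x^{(4)}$ via \Cref{cor:bad_reduction}, treating the cuspidal complements through \Cref{lem:CC} by imposing congruence (splitting) conditions on the auxiliary primes $q$, with all verifications carried out in \texttt{magma} — this is exactly the paper's proof, including the special attention to the degree $2+2$ reducible divisors and to $X_1(2,20)$, $X_1(2,24)$ via the Eichler--Shimura relation of \Cref{sec:hecke}. The only (minor) divergence is your preference for $p=2$ with the bound $\gon_\Q X_1(n)>8$: the paper instead always takes an odd $p$ (namely $3,5,7$ or $11$, chosen so that $Y(\F_p)=\emptyset$) and invokes case a) of \Cref{prop_main}, which needs only $\gon_\Q X>4$, available for all these curves — including $X_1(2,20)$ and $X_1(2,24)$, which are not covered by \cite{derickxVH} — from \cite{JeonKimPark06}; note also that your explicit fallback $p=3$ violates $p\nmid n$ for $n=36,42$ and for $\tg{2}{24}$, though your opening requirement that $p$ be coprime to $\#T$ already rules this out in principle.
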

\begin{proof}

We apply the Hecke Sieve to show that these groups are not quartic torsion groups. Let $T$ be one of the torsion groups from the statement of the proposition. For all the modular curves $X:=X_1(n)$ or $X_1(2,2n)$ (and defining similarly $Y:=Y_1(n)$ or $Y_1(2,2n)$) corresponding to our groups, we have $\gon_\Q X>4$ by \cite[Theorem 2.6 and Proposition 2.7]{JeonKimPark06} and $\rk J(X)(\Q)=0$ by \cite[Theorem 4.1]{DerickxSutherland} and \cite[Theorem 3.1]{Deg3Class}.

Let $p$ be a prime not dividing $\#T$, and hence a prime of good reduction of $X$. We can conclude by \Cref{prop_main} a) that for an elliptic curve $E$ over a quartic field $K$ with torsion $T \hookrightarrow E(K)$ has good reduction at a prime $\wp$ of $K$ over $p$.

So for the remainder of the proof we consider the case that $E$ has good reduction at least one prime $\wp$ of $K$ over $p$. We use $x \in X(K)$ denote the point corresponding to $E$ with the inclusion $T \hookrightarrow E(K)$, and let $x^{(4)} \in X(\Q)$ denote the effective divisor which is the sum of the Galois conjugates of $X$. The fact that $E$ has good reduction at $\wp$ is equivalent to $x^{(4)}_{\F_p}$ being of the form $D+C$ where $D \in Y^{(d')}(\F_p)$ is an effective divisor of degree $d'$ with $1 \leq d' \leq 4$ and $C$ a, possibly trivial, effective divisor supported on the cusps. 

We will complete the proof by verifying that for every $T$ we can find $p$ and $q$ such that the assumptions of \Cref{cor:bad_reduction} are satisfied. This will imply that $E$ has bad reduction modulo $\wp$, which will be a contradiction. In \Cref{table:Hecke_sieve} below we list the choices we make for our computations that verify this. 

In the 1st column we list the torsion groups $T$. In the second column we give the prime $p$ we use for this torsion group. In the columns $\deg d'$, for $d'=2,3,4$ we list the primes $q$ that we used to deal with that degree, if there is no value of $q$ listed this means $Y(\F_{p^{d'}})=\emptyset$. Note that  the column for $d'=1$ is omitted since for the $p$ we used we always have $Y(\F_{p})=\emptyset$. The irreducible divisors in $Y^{(d')}(\F_p)$ correspond to Galois orbits of points in $Y(\F_{p^{d'}})$ that do not come from a subfield. However $Y^{(d')}(\F_p)$ might also contain reducible divisors. Since $Y(\F_{p})=\emptyset$, the only way this can happen is if $d'=4$ and $D \in Y^{(4)}(\F_p)$ is the sum of two irreducible effective divisors of degree 2. This case is listed in the $\deg 2+2$ column. An absence of $q$ in this $\deg 2+2$ column denotes that $Y(\F_{p^2})=\emptyset$. If $Y(\F_{p^{d'}}) \neq \emptyset$ we explicitly check that $[A_q(D)]\neq 0$ for all irreducible effective divisors $x^{(d')}_{\F_p}\in Y^{(d')}(\F_p)$ in order to verify that \Cref{cor:bad_reduction} i) is satisfied. The value $q$ used  (or multiple values $q$ if more than one was used) is listed in the $\deg d'$ column. To explain the $\deg 2+2$ entry, note that we need to rule out the possibility that $x^{(4)}_{\F_p}\in Y^{(4)}(\F_p)$ is the sum of $2$ degree 2 effective irreducible divisors, so we check $[A_q(D)]\neq 0$ for all such divisors. For each $X_1(n)$ and $X_1(2,2n)$ one can find the computations we performed in our GitHub directory in the corresponding file \verb|X1_n.m| or \verb|X1_2_2n.m|.

It remains to verify \Cref{cor:bad_reduction} ii); this will in some cases impose congruence conditions on the choice of $q$ we are allowed to use. The only time we need to check anything is when $x^{(4)}_{\F_p}$ splits as $x^{(4)}_{\F_p}=D+C$, where $D$ is an irreducible effective non-cuspidal divisor and $C$ is an effective cuspidal (non-trivial) divisor. As $Y(\F_p)=\emptyset$ in all our cases, there are 2 cases we need to consider: when $\deg D=2$, and when $\deg D=3$. 

In the CC column in our table, a congruence condition on $q$ is listed, which guarantees that the assumptions of \Cref{lem:CC} are satisfied, and that we can conclude $[A_q(C)]=0$. If $\deg D=3$, this means that $q$ splits completely over the fields of definition of all cusps that reduce to cusps defined over $\F_p$. If $\deg D=2$, this means that $q$ splits completely in the field of definition of all cusps of degree $1$ or $2$ on $X_{\F_p}$ which are reductions of cusps of higher degree on $X$. We verify these claims in \github{congreuence_checks.m}{Claim 1} for $\deg D=3$ and in (see \github{congreuence_checks.m}{Claim 2}) for the $\deg D=2$ case.

\begin{table}[H]
\centering
\begin{tabular}{|c|c|c|c|c|c|c|}
\hline
$(m,n)$ & $p$ & \multicolumn{5}{|c|}{$q$} \\
\hline
& &  CC & $\deg2$ &   $\deg 3$ & $\deg 2+2$ & $\deg 4$  \\
\hline
$(1,25)$ & $3$ &    &    &   &   & $7$ \\
\hline
$(1,26)$ & $7$ & & $3$ &  $3$ &  $3$     & $3$ \\
\hline
$(1,28)$ & $5$ & $ 1 \pmod 4$ & $13$  & $13$ &  $3$ &   $3$ \\
\hline
$(1,32)$ & $3$ & &  &    $5$  &  & $5$ \\
\hline
$(1,34)$ & $3$ &  &   &   &    & $5$ \\
\hline
$(1,36)$ & $5$ &  $ 1 \pmod 4$ &   &$13$ & & $7$ \\
\hline
$(1,42)$ & $11$ & & $5$ & $5$ & $5$ &     $5$ \\
\hline
$(2,20)$ & $7$ & & $3$  & $3$  &  $3$    & $3$ \\
\hline
$(2,24)$ & $5$ & $ 1 \pmod 4$ &   &     $17$ & &  $7,11$ \\
\hline

\end{tabular}
\caption{}\label{table:Hecke_sieve}
\end{table}
\end{proof}

\begin{remark}
    In the proof of \Cref{cor:2_2n} we used explicit equations for $X_1(2,2n)$. We used optimized models obtained using the methods of \cite[\S 3]{sutherlandtorsion} to make the run time finish in a reasonable amount of time.
\end{remark}

The case of the torsion group $\torz{30}$ requires some additional argumentation. 

\begin{proposition} \label{prop:30}
    The group $\torz{30}$ is not a quartic torsion group. 
\end{proposition}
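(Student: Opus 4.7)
The plan is to combine \Cref{prop_main} with a Hecke-sieve computation in the style of \Cref{cor:2_2n}, supplemented by one additional step designed to handle the reduction pattern that the uniform sieve fails to eliminate. First I would record the two standing hypotheses: $\rk J_1(30)(\Q)=0$ (from \cite[Theorem 3.1]{Deg3Class}) and $\gon_{\Q} X_1(30)>4$ (from \cite[Theorem 3]{derickxVH}). These let us invoke \Cref{prop_main} a) with a prime $p$ coprime to $30$. The natural choice is $p=7$, at which $Y_1(30)(\F_7)=\emptyset$ by the Hasse bound as in \Cref{cor:curves_over_fp}. Thus if $E$ is an elliptic curve over a quartic field $K$ with $\torz{30}\hookrightarrow E(K)$ and $x\in Y_1(30)(K)$ is the corresponding moduli point, \Cref{prop_main} a) forces $E$ to have good reduction at some prime $\wp\mid p$, so $x^{(4)}_{\F_7}=D+C$ with $D$ non-cuspidal of degree $d'\ge 1$ and $C$ cuspidal of degree $4-d'$.

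Since $Y_1(30)(\F_7)=\emptyset$, the only possibilities for $D$ are: irreducible of degree $2$, $3$ or $4$; or the reducible shape $D=D_1+D_2$ with $D_1,D_2\in Y_1(30)^{(2)}(\F_7)$. For each of the three irreducible shapes I would run the Hecke sieve as in the proof of \Cref{cor:2_2n}: for every non-cuspidal place above $\F_7$ of the relevant degree, search for an auxiliary prime $q\nmid 2\cdot 30\cdot 7$ (subject to the congruence condition from \Cref{lem:CC} that forces $[A_q C]=0$) such that $[A_q D]\neq 0$ in $J_1(30)(\F_7)$. By \Cref{prop:hecke_sieve}, any such $q$ contradicts the existence of a lift $x^{(4)}\in Y_1(30)^{(4)}(\Q)$. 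These computations are the same in spirit as the table entries in \Cref{table:Hecke_sieve} and are intended to match Claims 2, 4, 5 in \githubbare{X1_30.m}.

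The hard step, and the reason an additional argument is needed, is the reducible case $D=D_1+D_2$. Here the naive sieve can fail: although each $[A_q D_i]$ may be nonzero, the class $[A_q D_1+A_q D_2]=[A_q D]$ can vanish in $J_1(30)(\F_7)$ because $J_1(30)(\F_7)$ carries cancellation between the Galois-conjugate pieces. My plan is to loop over every unordered pair $\{D_1,D_2\}$ of non-cuspidal degree-$2$ places in $Y_1(30)^{(2)}(\F_7)$ (a finite, explicit list) and search for a sieving prime $q$, or a short collection of such primes, for which $[A_q(D_1+D_2)]\neq 0$ while still respecting the congruence condition from \Cref{lem:CC} that guarantees $[A_q C]=0$. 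If one $q$ does not separate all pairs, I would combine several $q$'s, exactly as multiple values of $q$ are already combined for $\tg{2}{24}$ in \Cref{table:Hecke_sieve}.

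If some residual pair $\{D_1,D_2\}$ withstands the sieve at $p=7$, my fallback is to repeat the argument at an independent auxiliary prime, the next natural candidate being $p'=11$, which is also coprime to $30$. A point $x^{(4)}\in Y_1(30)^{(4)}(\Q)$ would have to admit an admissible reduction at \emph{both} $p=7$ and $p'=11$; since the obstructions imposed at the two primes cut out independent cosets modulo torsion in $J_1(30)(\Q)$, iterating the sieve at $p'$ on the surviving configurations eliminates them. The main obstacle throughout is precisely the joint vanishing behavior of $A_q$ on $D_1+D_2$, and overcoming this through the combination of additional primes $q$ (and if necessary an auxiliary residue characteristic $p'$) is what the extra argumentation alluded to in the statement amounts to.
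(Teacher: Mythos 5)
Your setup and the bulk of the sieve agree with the paper: the same hypotheses ($\rk J_1(30)(\Q)=0$, $\gon_\Q X_1(30)>4$), the same prime $p=7$ with $Y_1(30)(\F_7)=\emptyset$, \Cref{prop_main}~a) to force good reduction at a prime above $7$, and the Hecke sieve to kill the configurations where the non-cuspidal part $D$ of $x^{(4)}_{\F_7}$ is irreducible of degree $2$, $3$ or $4$ (the paper uses $q=13,13,11$ respectively). You also correctly isolate the problematic case: the reducible shape $D=D_1+D_2$ with both $D_i$ of degree $2$, where exactly two configurations $z',z''$ (up to diamond operators) survive the sieve.

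The gap is in how you handle those survivors. Your Plan~A — try further sieving primes $q$, or combinations of them — is not guaranteed to terminate in success: the sieve eliminates $z'$ only if $[A_q(z')]\neq 0$ for some admissible $q$, and if the class $[z'-4C_{\F_7}]$ happens to behave like the reduction of a rational torsion class as far as the operators $A_q$ can detect, then every admissible $q$ gives $[A_q(z')]=0$ and no amount of extra sieving helps; this is precisely the situation the proposition's ``additional argumentation'' is needed for. Your Plan~B (repeat at $p'=11$ and combine) rests on the unjustified assertion that the obstructions at $7$ and $11$ ``cut out independent cosets modulo torsion''; combining local information at two primes is a Mordell--Weil-sieve-type argument and requires global input about $J_1(30)(\Q)$ that you have not supplied — exactly the kind of generator computation the Hecke sieve is designed to avoid. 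The paper's resolution is different and decisive: by \cite[Corollary 4.14]{Deg3Class}, $J_1(30)(\Q)$ is generated by differences of cusps, so for a rational cusp $C$ the class $[x^{(4)}-4C]$ of any hypothetical rational point lies in the cuspidal subgroup, and hence its reduction $[z'-4C_{\F_7}]$ (resp.\ $[z''-4C_{\F_7}]$) must lie in the subgroup of $J_1(30)(\F_7)$ generated by reductions of cusps; an explicit computation shows it does not. That global structural fact about $J_1(30)(\Q)$ is the missing idea in your proposal, and without it (or an equivalent substitute) the two residual configurations are not eliminated.
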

\begin{proof}We largely follow the proof of \Cref{cor:2_2n}.  We have $\gon_\Q X_1(30)$ $>4$ by \cite[Theorem 2.6]{JeonKimPark06} and $\rk J_1(30)(\Q)=0$ by \cite[Theorem 3.1]{Deg3Class} so that we can apply \Cref{prop_main} a) to show that any elliptic curve over a quartic field $K$ with $\torz{30}$ torsion has to have good reduction at least one prime $\wp$ of $K$ over $p$.

    We apply the Hecke sieve for $X_1(30)$ using $p=7$ in order to rule out the possible reductions $Y_1^{(4)}(30)(\F_7)$, this time using \Cref{prop:hecke_sieve} directly instead of \Cref{cor:bad_reduction}. Let  $z\in Y_1^{(4)}(30)(\F_7)$, and write $z=D+C$, where $D$ is a non-trivial effective non-cuspidal divisor and $C$ is an effective cuspidal divisor. As $Y_1(30)(\F_7)=\emptyset$, $D$ is of degree $2,3$ or $4$.

    We apply the Hecke sieve using $q=13, 13$ and $11$, respectively, for degrees $2, 3$ and $4$, respectively. This shows that $z$
    is not the reduction of a point $x^{(4)}\in Y_1^{(4)}(30)(\Q),$ see \github{X1_30.m}{Claims 2,4,5} in the cases where $D$ is of degree $2$ and $3$ or irreducible of degree $4$, respectively.

    However, there are 2 points $z',z''\in Y_1^{(4)}(30)(\F_7)$ (up to the action of diamond operators), each of which is the sum of 2 irreducible degree $2$ non-cuspidal divisors, and which are not eliminated by the Hecke sieve. 

    To eliminate them, we note that $J_1(30)(\Q)$ is generated by differences of cusps \cite[Corollary 4.14]{Deg3Class}. Let $C\in X_1(30)(\Q)$ be some rational cusp. It follows that if either $z'$ or $z''$ were reductions modulo $7$ of $x^{(4)}\in Y_1^{(4)}(30)(\Q)$, then $[z'-4C_{\F_7}]$ or $[z''-4C_{\F_7}]$ would have to lie in the subgroup of $J_1(30)(\F_7)$ generated by sums of cusps that lift to $\Q$. However, using an explicit computation we check that $[z'-4C_{\F_7}]$ or $[z''-4C_{\F_7}]$ are not in the subgroup generated by cusps \github{X1_30.m}{Claim 3}, completing the proof.
\end{proof}

\subsection{The group $\tg{3}{15}$}

We show that $\tg{3}{15}$ is not a quartic torsion group by considering it over finite fields of different characteristics. 

\begin{proposition}\label{prop:3x15}
    The group $\tg{3}{15}$ is not a quartic torsion group.
\end{proposition}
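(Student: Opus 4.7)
The plan hinges on the observation that $\tg{3}{3} \subseteq E(K)$ forces, via the Weil pairing, $\Q(\zeta_3) \subseteq K$. Thus $K/\Q(\zeta_3)$ is a quadratic extension, and the relevant modular-curve statement is that $X_1(3,15)$ has no non-cuspidal quadratic point over $\Q(\zeta_3)$ whose underlying elliptic curve has full $3$-torsion rational. Accordingly, the residue fields of $K$ above a rational prime $p$ are tightly constrained by the splitting behaviour of $p$ in $\Q(\zeta_3)$.

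First I would rule out good reduction at every prime of $K$ above $p = 2$. Since $2$ is inert in $\Q(\zeta_3)$, any residue field at a prime of $K$ above $2$ is either $\F_4$ or $\F_{16}$. The Hasse bound gives
\[
\#E(\F_{2^d}) \le (1+2^{d/2})^2 \le 25 < 45
\]
for $d \in \{2,4\}$, so no elliptic curve over such a residue field admits a subgroup of order $45$. By \Cref{lem:main}, $E$ has bad (in fact additive, by the coprimality of $15$ and $2$) reduction at every prime of $K$ above $2$.

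Next I would perform an analogous analysis at a second, carefully chosen auxiliary prime. For instance, for the inert prime $p=11$ the residue fields of $K$ above $11$ have size $121$ or $121^2$, and I would enumerate via Hasse the finitely many possible values of $\#E(\F_{p^d})$ divisible by $45$; then impose the Weil-pairing restriction $m \mid p^d - 1$ on the group structure $E(\F_{p^d}) \cong \tg{m}{n}$ (to cut out the subcase containing $\tg{3}{15}$), and finally apply Waterhouse (\Cref{waterhouse}) to see which Frobenius traces are realised. After these restrictions only a short list of putative isogeny classes survives; each of these is then eliminated either by observing that the corresponding trace is incompatible with the restriction already imposed at $p=2$ (additive reduction constrains the conductor exponent and thereby the possible $a_q$'s via the functional equation/Jacquet-Langlands), or by the global method \Cref{prop:hecke_gon} applied with a small auxiliary prime $q$ to the quadratic points on $X_1(3,15)_{\Q(\zeta_3)}$.

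The main obstacle is that no single rational prime $p$ yields a contradiction by itself once one passes beyond $p=2$: the Hasse bound at any larger prime leaves several candidate group orders, and one must combine local data at two (or three) different primes, together with Weil-pairing and Waterhouse restrictions, to reach a contradiction. This is what makes the argument genuinely ad hoc and places $\tg{3}{15}$ outside the uniform rank-$0$ Hecke-sieve framework used elsewhere in the paper.
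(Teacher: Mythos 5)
Your setup is right and matches the paper's first move: the Weil pairing puts $\Q(\zeta_3)\subseteq K$, so one studies quadratic points on $X_1(3,15)$ over $\Q(\zeta_3)$, and the Hasse bound over $\F_4$ and $\F_{16}$ (at most $9$, resp.\ $25<45$ points) together with \Cref{lem:main} rules out good reduction at every prime above $2$. But from there the proposal goes astray. First, the parenthetical ``in fact additive'' is backwards: with a subgroup of order $45$ prime to $2$, injectivity of reduction on prime-to-$2$ torsion shows additive reduction is \emph{impossible} (the special fibre would have order $a\cdot\#\F_{\wp}$ with $a\le 4$ a power of $2$ times at most $4$, not divisible by $45$); the case that remains, and the one that carries all the difficulty, is multiplicative reduction at every prime above $2$, i.e.\ the degree-$2$ divisor $x^{(2)}$ reduces to a cuspidal divisor.

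That remaining case is exactly what your sketch does not handle. The paper's proof at this point applies \Cref{prop_main}~b.ii) with $p=2$, and the whole content is verifying its hypothesis that reduction modulo a prime above $2$ is injective on $J_1(3,15)(\Q(\zeta_3))[2]$ (this is delicate: the paper's subsequent remark notes that reduction mod $2$ is \emph{not} injective on the full cuspidal subgroup over $\Qbar$, so it cannot be waved through). The paper resolves it by an explicit computation, comparing $J(\F_7)[2]\simeq(\torz{2})^3$ with the $2$-torsion of the group generated by $\F_4$-rational cusps, to pin down $J(\Q(\zeta_3))[2]$ and get the needed injectivity; only then does the gonality argument of \Cref{prop_main} close the case. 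Your alternative — a second auxiliary prime such as $11$ with Waterhouse/Weil-pairing bookkeeping — does not yield a contradiction: $11$ is inert in $\Q(\zeta_3)$, and curves over $\F_{121}$ of order $135$ (trace $-13$) with $\tg{3}{15}\subseteq E(\F_{121})$ are not excluded by these constraints, so non-cuspidal points survive. The two mechanisms you invoke to kill them are not available: bad reduction of $E/K$ at a prime above $2$ imposes no usable constraint on Frobenius traces at $11$ (there is no functional-equation or Jacquet--Langlands input for an arbitrary elliptic curve over a quartic field), and \Cref{prop:hecke_gon} is stated for degree-$d$ points over $\Q$ on quotients $X_H$ of $X_1(n)$ with its hypotheses on $J_H(\Q)$ and $\gon_\Q$, so it does not apply as-is to quadratic points of $X_1(3,15)$ over $\Q(\zeta_3)$ — and if it did apply wholesale, the local analysis would have been unnecessary in the first place. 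In short, the missing idea is the injectivity of mod-$2$ reduction on the relevant $2$-torsion of $J_1(3,15)(\Q(\zeta_3))$ (established computationally in the paper), without which the multiplicative-reduction case is not closed.
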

\begin{proof}
    Let $X:=X_1(3,15)$; we consider this curve over $K:=\Q(\sqrt {-3})$. The curve $X$ has 64 cusps in total, all of which are defined either over $K$ or over $\Q(\zeta_{15})$.
    
     Let $C_K$ be the group generated by differences of $K$-rational cusps, and $C_{\F_4}$ the subgroup generated by differences of $\F_4$-rational cusps. We have $\#C_{\F_4}\leq \#C_{K}$, since all $\F_4$-rational cusps are reductions of $K$-rational cusps. We compute $J(\F_7)[2]\simeq (\torz{2})^3$ \github{X1_3_15.m}{Claim 1} and $C(\F_4)[2]\simeq (\Z/2\Z)^3$ \github{X1_3_15.m}{Claim 2}. Since 
    $$(\torz{2})^3\simeq C_{\F_4}[2] \twoheadleftarrow C_K[2] \hookrightarrow J(K)[2] \hookrightarrow J(\F_7)[2] \simeq (\torz{2})^3, $$
    we conclude that the reduction modulo the prime above $2$ is injective.  

    Now the case follows \Cref{prop_main} b.ii) and \Cref{lem:main} using $K=\Q(\zeta_3)$ and $d=2$ and the fact that all the points in $X(\F_4)$ and $X(\F_{16})$ are cusps.   
\end{proof}

\begin{remark} Let $C$ be the cuspidal subgroup of $J:=J_1(m,n)$. Note that $C(\Qbar)\leq J(\Qbar)_{\tor}$, by Manin-Drinfeld, and that any prime of good reduction of $J$ is unramified in $\Q(\zeta_n)$, in which all the cusps of $X_1(m,n)$ are contained. This implies that reduction $C(\Qbar)\rightarrow C(\overline \F_p)$ is injective for all primes $p>2$ not dividing $n$.

Let $C$ be the cuspidal subgroup of $J_1(3,15)$. We compute that 
$$C(\Q(\zeta_{15}))[2]\simeq(\torz{2})^6, \text{ while }C(\F_{16})[2]\simeq(\torz{2})^5,$$ which implies that the reduction modulo a prime above $2$ is not injective on $C(\Qbar)$. This is the first instance that we are aware of, where the reduction of the cuspidal subgroup modulo a prime of good reduction is not injective on a modular Jacobian. 
\end{remark}

\section{The positive rank cases}

\subsection{The global method}

We can apply \Cref{prop:hecke_gon} to eliminate most of the positive rank cases. 

\begin{proposition} \label{prop:global_argument}
    $\torz{n}$ is not a quartic torsion group for 
    $$n\in \{77, 85,91,121, 143,169, 187, 221, 289\}.$$
\end{proposition}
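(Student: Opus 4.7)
The plan is to apply the global method of Proposition \ref{prop:hecke_gon}, with $d = 4$ and $H = \{\pm 1\}$, to each of the nine modular curves $X_1(n)$ in the list. The main task is, for each level $n$, to identify a diamond operator $\diamondop{a} \in \Delta(n) := (\Z/n\Z)^\times/\{\pm 1\}$ whose order does not divide $4$ and for which $T := (\diamondop{a}-1)J_1(n)(\Q)$ is finite. Using the isogeny decomposition
\[
J_1(n) \sim \bigoplus_{n'\mid n}\bigoplus_{[f] \in S_2(\Gamma_1(n'))^{\mathrm{new}}_{\Gal}} A_{[f]}^{\sigma_0(n/n')},
\]
this reduces to choosing $a$ so that every simple factor $A_{[f]}$ of positive analytic rank (equivalently, positive algebraic rank, by Kato's theorem, since each $A_{[f]}$ is a quotient of $J_1(n)$) satisfies $\diamondop{a}|_{A_{[f]}} = \id$. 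Such an $a$ can be located by reading the LMFDB analytic-rank tables for the newforms of each level dividing $n$ and picking an element of $\Delta(n)$ that fixes the nebentypus of every positive-rank Galois orbit $[f]$.

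Second, I would pick a small prime $q \nmid 2n$ and verify the gonality hypothesis $4 k_q < \gon_\Q X_1(n)$ of Proposition \ref{prop:hecke_gon}. Taking $q = 3$ with $a \notin \{3, 3^{-1}\} \bmod H$ gives $k_q = 2(q+1) = 8$, so the condition becomes $\gon_\Q X_1(n) > 32$. For the larger levels $n \in \{121, 143, 169, 187, 221, 289\}$, the DS-inequality of Proposition \ref{prop:hecke_gon}, combined with Abramovich's bound (Theorem \ref{abramovich}), suffices. For the smaller levels $n \in \{77, 85, 91\}$, where Abramovich's bound is not strong enough, one would invoke the sharper gonality lower bounds from \cite{derickxVH}. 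If $q = 2$ is more convenient for a specific $n$, the additional $2$-adic hypothesis on $T$ can be verified either by checking that $\#T$ is odd or by computing $A_2$ explicitly on the $2$-primary part of $T$.

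Once the hypotheses are met, the corollary to Proposition \ref{prop:hecke_gon} shows that every rational quartic point on $X_1(n)$ is supported on cusps and CM points. To finish, one would exclude both possibilities for a point corresponding to genuine $\torz n$-torsion on an elliptic curve over a quartic field: cuspidal support contributes no torsion data for (non-generalized) elliptic curves, and CM quartic points of order $n$ for these $n$ can be ruled out by inspecting the finite list of CM $j$-invariants of quartic fields of definition together with known bounds on their torsion (e.g.\ using the framework underlying \cite{CGPS22}).

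The main obstacle will be Step 1 for the prime-power levels $n = 121, 169, 289$, where the character decomposition of $S_2(\Gamma_1(n))$ is intricate and one must verify that all positive-rank factors sit inside a common $\diamondop{a}$-fixed subspace for some $a$ of order exceeding $4$. If no single $a$ achieves this, one can instead iterate, noting that $(\diamondop{a_1}-1)(\diamondop{a_2}-1)J_1(n)(\Q)$ will be finite provided each factor kills a complementary portion of the positive-rank part; this requires only a minor modification of the proof of Proposition \ref{prop:hecke_gon}. A secondary difficulty is the CM exclusion in Step 3, which may require some ad hoc verification at each of the nine levels, but in each case the list of CM elliptic curves with $n$-torsion over a quartic field is finite and explicitly computable.
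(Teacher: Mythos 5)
Your overall strategy is the same as the paper's: apply \Cref{prop:hecke_gon} with $H=\{\pm1\}$, $d=4$, $q=3$, choosing a diamond operator $\diamondop{a}$ of order not dividing $4$ that acts trivially on the positive-rank part of $J_1(n)$ (any nontrivial $a$ when $\rk J_1(n)(\Q)=\rk J_0(n)(\Q)$, and $a$ in the kernel of the order-$5$ nebentypus orbits for $n=143,187$), and then rule out the remaining CM-supported divisors via the least-CM-degree data of \cite{CGPS22}. However, there is a concrete gap in your gonality step. By insisting $a\notin\{3,3^{-1}\}$ you lock yourself into $k_3=2(q+1)=8$, and then the Abramovich-based inequality \eqref{DS-inequality} reads $4<\tfrac{325}{2^{15}}\cdot\tfrac{[\SL_2(\Z):\Gamma_1(n)]}{16}$. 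For $n=77$ the index is $5760$, giving roughly $3.57<4$, so the inequality fails; your proposed fallback, ``sharper gonality lower bounds from \cite{derickxVH},'' does not exist at the required strength: that paper proves exact gonalities only for $N\le 40$ and lower bounds of the shape $\gon_\Q X_1(N)>8$, nowhere near the $\gon_\Q X_1(77)>32$ you would need. (Incidentally, for $n=85$ and $91$ your worry is misplaced: even with $k_3=8$ the inequality \eqref{DS-inequality} holds, since the indices $6912$ and $8064$ give bounds $\approx 4.28$ and $\approx 5.0$.)

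The paper closes exactly this hole by exploiting the refined case of \Cref{prop:hecke_gon}: for $n=77$ (and the other levels with $\rk J_1=\rk J_0$) one is free to take $a=3=q$, which is nontrivial of order $30$ in $(\Z/77\Z)^\times/\{\pm1\}$ and yields the smaller constant $k_3=2q+1=7$; then $\tfrac{325}{2^{15}}\cdot\tfrac{5760}{14}\approx 4.08>4$ and Abramovich suffices with no extra input. So the fix is not a better gonality bound but a better choice of $a$ — the one you explicitly excluded. Your secondary concerns are also heavier than needed: for the prime powers $121,169,289$ the LMFDB rank data already give $\rk J_1(n)(\Q)=\rk J_0(n)(\Q)$, so no iterated $(\diamondop{a_1}-1)(\diamondop{a_2}-1)$ variant (which would in any case inflate the degrees in the proof of \Cref{prop:hecke_gon}, not a ``minor modification'') is required, and the CM exclusion needs no ad hoc work beyond quoting that $d_{CM}X_1(n)>4$ for all nine levels.
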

\begin{proof}
    For $n\in\{77,85,91, 121, 169, 221, 289\}$ we have $$\rk J_1(n)(\Q)=\rk J_0(n)(\Q),$$ so $A:=(\diamondop{a}-1)J_1(n)(\Q)$ is finite for any non-trivial element $a\in (\torz{n})^\times$. For $n\in\{143,187\}$ we have $\rk J_1(n)(\Q)>\rk J_0(n)(\Q)$. In both cases, the positive rank parts of $J_1(n)/J_0(n)$ correspond to a unique Dirichlet character Galois orbit. These character orbits are \lmfdcharorbit{143}{h} for $n=143$ (corresponding to the newform \lmfdbnewform{143}{2}{h}{a}) and \lmfdcharorbit{187}{g} for $n=187$ (corresponding to the newform \lmfdbnewform{187}{2}{g}{a}), and in both cases the characters in these orbits are all of order 5. For $n=143$ (respectively $187$) let $\chi$ be a character in the character orbits \lmfdcharorbit{143}{h} (respectively \lmfdcharorbit{187}{g}). If we choose $a$ in the kernel of $\chi$ then $A:=(\diamondop{a}-1)J_1(n)(\Q)$ is finite, and we can apply \Cref{prop:hecke_gon} as well. Choosing $a$ with $\diamondop{a}$ of order coprime to $5$ suffices to ensure $\chi(a)=1$.

    We will use $q=3$ in all cases, and will have $k_3=7$ in all cases, apart from $n=121,143$ and $187$, where $k_3=8$. In \Cref{table:DS} we list the values $a \in (\torz{n})^\times $ that we use, the order of $\diamondop{a}$ and the value of  $b(n):=\frac {325} {2^{15}}\frac{[\SL_2(\mathbb Z) :\Gamma_1(n)]}{k_3 \cdot \#H}$; we need to have $b(n)> 4$ to proceed. The value $d_{CM}X_1(n) $ is the least degree of a CM point on $X_1(n)$, as listed in the GitHub repository \url{https://github.com/fsaia/least-cm-degree} of the paper \cite{CGPS22}.
    
\begin{table}[H]
\centering
\begin{tabular}{c|c|c|c|c}
$n$ & $a$ & $|\diamondop{a}|$ & $\lceil b(n)\rceil $ &  $d_{CM}X_1(n) $ \\ 
\hline
$77$  & $3$   & $30$ & $5$  & $60$  \\
$85$  & $3$   & $16$  & $5$  & $32$  \\
$91$  & $3$  & $6$  & $6$  & $24$  \\
$121$ & $56$  & $11$ & $10$ & $110$ \\
$143$ & $67$  & $12$ & $13$ & $120$ \\
$169$ & $3$  & $39$  & $21$ & $52$  \\
$187$ & $122$ & $16$ & $22$ & $160$ \\
$221$ & $3$  & $48$  & $35$ & $96$  \\
$289$ & $3$ & $272$  & $59$ & $136$ \\
\end{tabular}
\caption{}\label{table:DS}
\end{table}

We now apply \Cref{prop:hecke_gon}.  Since $b(n) > 4$, \eqref{DS-inequality} is satisfied. Thus all rational points in $D\in X_1^{(4)}(n)(\Q)$, not supported on the cusps, are invariant under the action of $\diamondop{a}$. If $D$ has no fixed points of $\diamondop{a}$ in its support, then it has to be of degree at least $|\diamondop{a}|$ (the order of $\diamondop{a}$), which is not possible by \Cref{table:DS}. This implies that the support of $D$ consists of points corresponding to elliptic curves with $j=0$ or $j=1728$. But from the aforementioned results \cite{CGPS22}, it follows that $X_1(n)$ has no CM points of degree $< d_{CM}X_1(n) $, which shows that this is impossible.

\end{proof}

\begin{remark} \label{rem:6.2}
    Note that in \Cref{prop:global_argument} we also prove results for higher degree number fields. What follows from our proof is that there are no elliptic curves over number fields of degree $n$ over number fields of degree $d\leq \lceil b(n)\rceil$  such that $|\diamondop{a}|$ does not divide $d$, for the values of $n$ in \Cref{table:DS}.
\end{remark}

\subsection{The groups $\torz{63}$ and $\torz{65}$} \label{sec:63_65}

We now deal with the cases $\torz{63}$ and $\torz{65}$ which turn out to be harder and require some ad hoc argumentation. 

\begin{lemma} \label{lem:cusps}Below we determine the cusps of $X_1(n),$ for $n=63, 65$, that are defined over $\F_{2^k}$  and $\F_{3^k}$ for $k\leq 4$.
    \begin{enumerate}
        \item $n=63$: The cusps corresponding to $63$-gons are defined over $\F_2$, the cusps corresponding to $21$-gons are defined over $\F_4$, the cusps corresponding to $9$-gons are defined over $\F_8$, while the remaining cusps are defined over $\F_{2^k}$ for some $k\geq 5.$
        \item $n=65$ in characteristic $2$:  The cusps corresponding to $65$-gons are defined over $\F_2$, and the cusps corresponding to $13$-gons are defined over $\F_{16}$, while the remaining cusps are defined over $\F_{2^k}$ for some $k\geq 5.$
        \item $n=65$ in characteristic $3$:  The cusps corresponding to $65$-gons are defined over $\F_3$, and the cusps corresponding to $13$-gons are defined over $\F_{81}$, the cusps corresponding to $5$-gons are defined over $\F_{27}$ while the $1$-gons are defined over $\F_{3^k}$ for some $k\geq 5.$
    \end{enumerate}
\end{lemma}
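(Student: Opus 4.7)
The plan is to determine the field of definition of each cusp from the moduli description of $X_1(n)$ recalled in Section~2, and then read off its residue field at the relevant prime.

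First I would set up a parameterization of the cusps of $X_1(n)$ of N\'eron $k$-gon type for each $k \mid n$. A cusp is an isomorphism class of pairs $(\mathbb{G}_m \times \torz{k}, \phi)$ with $\phi(1) = (\zeta, u)$, $u \in (\torz{k})^\times$, and $\lcm(\ord(\zeta), k) = n$. The main obstacle here is to correctly identify all automorphisms of the N\'eron $k$-gon as a generalized elliptic curve: beyond the inversion $(z,a)\mapsto(z^{-1},-a)$, there is a $\mu_k$-action by $\zeta_k \cdot (z, a) = (\zeta_k^a z, a)$ that fixes the identity and hence gives genuine automorphisms of the pointed curve, a fact that is easy to overlook. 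Quotienting by this $\mu_k$-action identifies $(\zeta, u)$ with $(\zeta_k^{cu}\zeta, u)$ for all $c \in \torz{k}$, and since $u \in (\torz{k})^\times$ this forces $\zeta$ to be determined only by its image $\bar\zeta := \zeta^k \in \mu_{n/k}$, which the order condition constrains to be a primitive $(n/k)$-th root of unity. Together with the inversion identification $(\bar\zeta, u) \sim (\bar\zeta^{-1}, -u)$, this yields the expected count $\phi(n/k)\phi(k)/2$ of type-$k$ cusps, confirming the parameterization.

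Second, I would run the Galois calculation. Since $u$ is fixed by $\Gal(\Qbar/\Q)$ while $\sigma_s(\bar\zeta) = \bar\zeta^s$, a cusp $[(\bar\zeta, u)]$ is stabilized by $\sigma_s$ exactly when $s \equiv 1 \pmod{n/k}$, except that for $k \leq 2$ the identification $(\bar\zeta, u)\sim(\bar\zeta^{-1},-u)$ can contribute additional stabilizers. For $k \geq 3$ the condition $u \equiv -u \pmod k$ is incompatible with $\gcd(u, k) = 1$, so the stabilizer is precisely $\{s \equiv 1 \pmod{n/k}\}$ and the field of definition is $\Q(\zeta_{n/k})$. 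For $k = 1$ the parameter $u$ is trivial and the identification $\zeta \sim \zeta^{-1}$ gives field of definition $\Q(\zeta_n)^+$. Every $k$ appearing in the lemma is odd, so no subtlety from $k = 2$ arises.

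Finally, I would read off residue fields at primes above $p$ in these cyclotomic fields. Since $p \nmid n$ in every case considered, $\Q(\zeta_{n/k})$ is unramified at $p$ and has residue field $\F_{p^{e}}$, where $e$ is the multiplicative order of $p$ in $(\torz{n/k})^\times$; analogously for $\Q(\zeta_n)^+$ the residue field is $\F_{p^{e'}}$ with $e'$ the order of $p$ in $(\torz{n})^\times / \{\pm 1\}$. A direct case-by-case computation of these multiplicative orders for each divisor $k \mid n$ at $p = 2$ (for $n = 63$ and $n = 65$) and at $p = 3$ (for $n = 65$) reproduces the residue fields in the statement: for instance, at $n = 63, p = 2$ the divisors $k = 63, 21, 9$ give $n/k \in \{1, 3, 7\}$ with $\ord_2 \in \{1, 2, 3\}$ and hence residues $\F_2, \F_4, \F_8$; the remaining divisors of $63$ yield multiplicative orders $6$, falling into the "remaining cusps" bucket, and the two $n = 65$ calculations go through in the same way.
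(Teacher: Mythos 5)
Your proof is correct and follows essentially the same route as the paper: identify the $k$-gon cusps via the moduli interpretation, show they are defined over $\Q(\zeta_{n/k})$ (resp.\ $\Q(\zeta_n)^+$ for the $1$-gons, with the $\mu_k$-automorphisms of the N\'eron polygon and the inversion accounted for), and then read off residue fields at the unramified primes $2$ and $3$ via the order of $p$ modulo $n/k$. You simply carry out in detail what the paper asserts in one line, and your field $\Q(\zeta_{n/k})$ for the $k$-gon cusps is the one consistent with the lemma's statement (the paper's proof sentence for squarefree $n=pq$ appears to have $\zeta_p$ and $\zeta_q$ interchanged), so no gap.
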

\begin{proof}
 The cusps corresponding to $n$-gons are defined over $\Z[1/n]$ and the $1$-gons over $\Z[1/n, \zeta_{n}+\zeta_{n}^{-1}]$. For the squarefree cases $n=pq$, the $p$-gons are defined over $\Z[1/n, \zeta_p]$. 
\end{proof}

\begin{proposition}\label{prop:63}
    $\torz{63}$ is not a quartic torsion group.
\end{proposition}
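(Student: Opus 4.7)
My plan is to rule out $\torz{63}$ by combining a Hasse-bound observation in characteristic $2$ with \Cref{prop_main}(b.ii), following the template of \Cref{prop:3x15}.

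First, since $63 > 25$ is odd, the first part of \Cref{cor:curves_over_fp} gives $Y_1(63)(\F_{2^d}) = \emptyset$ for every $d \le 4$. By \Cref{lem:main}, any hypothetical elliptic curve $E$ over a quartic field $K$ with $\torz{63} \hookrightarrow E(K)$ must therefore have bad reduction at every prime of $K$ above $2$. It remains to contradict this ``everywhere bad above $2$'' scenario, and this is what \Cref{prop_main}(b.ii) is designed to do.

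Second, I would verify the hypotheses of \Cref{prop_main}(b.ii) with $p = 2$, $d = 4$ and $L := \Q(\sqrt{-3}) = \Q(\zeta_3)$ (so that the unique prime $\wp''$ of $L$ above $2$ is inert with residue field $\F_4$). Three of the hypotheses are straightforward: (i) $\rk J_1(63)(\Q) = 0$ can be read from the LMFDB data on newforms of weight $2$ and levels dividing $63$ together with Kato's theorem; (ii) $\gon_\Q X_1(63) > 4$ follows from \cite[Theorem 3]{derickxVH}; and (iii) by \Cref{lem:cusps}(1) the $\F_4$-rational cusps of $X_1(63)$ are the $63$-gon cusps (which are $\Q$-rational) and the $21$-gon cusps (which are defined over $\Q(\zeta_3) = L$), so every $\F_4$-rational cuspidal divisor lifts to an $L$-rational one.

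Third and crucially, I need to verify that reduction modulo $\wp''$ is injective on $J_1(63)(L)[2]$. Following the sandwich argument of \Cref{prop:3x15}, I would choose an auxiliary odd prime $p_0$ of good reduction that is unramified in $L$ (e.g.\ $p_0 = 5$, which is inert in $\Q(\sqrt{-3})$ with residue field $\F_{25}$), and then compute $J_1(63)(\F_{25})[2]$ together with the $2$-torsion of the cuspidal subgroups $C(L) \subseteq J_1(63)(L)$ and $C(\F_4) \subseteq J_1(63)(\F_4)$. If the resulting sizes match in the chain
\[ C(\F_4)[2] \twoheadleftarrow C(L)[2] \hookrightarrow J_1(63)(L)[2] \hookrightarrow J_1(63)(\F_{25})[2], \]
then each arrow is a bijection. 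In particular $J_1(63)(L)[2] = C(L)[2]$ consists only of cuspidal classes, which reduce injectively modulo $\wp''$, giving the last hypothesis of \Cref{prop_main}(b.ii) and completing the proof.

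The main obstacle I expect is this final sandwich computation: $X_1(63)$ has genus on the order of $121$, so determining $J_1(63)(\F_{25})[2]$ and $C(\F_4)[2]$ and matching them against the cuspidal $2$-torsion over $L$ requires careful computer-algebra work (to be carried out in an accompanying \texttt{X1\_63.m} script). A secondary concern is the rank hypothesis in Step (i): if $\rk J_1(63)(\Q)$ should turn out to be positive then \Cref{prop_main} does not apply directly, and one would instead have to first project away the positive-rank isogeny factors, either by passing to a rank-$0$ isogeny quotient of $J_1(63)$ or by replacing (b.ii) with a Hecke-operator variant in the spirit of \Cref{prop:hecke_gon}, and then carry out the analogous $2$-torsion bookkeeping on that quotient.
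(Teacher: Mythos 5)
The central step of your plan fails at the rank hypothesis, and this is not a secondary concern but the whole difficulty of this case: $\rk J_1(63)(\Q)$ is \emph{not} zero. This is precisely why $\torz{63}$ is not dispatched in \Cref{cor:easy} alongside $45,49,51,55,119$ (where your exact argument would indeed work) but is instead treated in the positive-rank section. The positive-rank isogeny factors of $J_1(63)$ come from newforms whose nebentypus has order divisible by $2$, so \Cref{prop_main} cannot be applied to $X_1(63)$ in any of its cases, and your Steps (i)--(iii) as written do not get off the ground. Your fallback sentence (``project away the positive-rank factors'') points in the right direction but is left entirely unexecuted, and executing it is the actual content of the proof: the paper replaces $X_1(63)$ by the quotient \emph{curve} $Y := X_1(63)/\diamondop{8}$ (a curve quotient, not merely an isogeny quotient of the Jacobian, so that quartic points push forward to honest degree-$4$ divisors on $Y$), observes that the isogeny factors of $J(Y)$ correspond to newforms of level dividing $63$ with character order $1$ or $3$, all of which have rank $0$ by LMFDB data, and then reruns the \Cref{prop_main}-style argument on $Y$: the reduction of the quartic point above $2$ is cuspidal by the Hasse bound and \Cref{lem:cusps}, the kernel of reduction mod $2$ has exponent at most $2$ by Parent, so $2[D-C]=0$, and this contradicts $\gon_\Q Y \geq \gon_\C Y > 8$ from \Cref{abramovich}.

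A further weakness is your reliance on case (b.ii), i.e.\ on proving that reduction mod a prime above $2$ is injective on $J_1(63)(L)[2]$ via the sandwich computation of \Cref{prop:3x15}. There is no guarantee this succeeds: the remark following \Cref{prop:3x15} exhibits $J_1(3,15)$ as a case where reduction mod $2$ is \emph{not} injective on the cuspidal subgroup, so the claimed chain of inequalities may simply fail to close, and in any event it would be a heavy computation on a genus-$121$ curve. The paper sidesteps this entirely by arranging, through the passage to $Y$, that the gonality exceeds $2d=8$, so that the exponent-$2$ kernel allowed by Parent's result is harmless (the (b.i)-style argument) and no $2$-torsion injectivity needs to be verified.
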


\begin{proof}
    We will eliminate this case by showing that there are no non-cuspidal quartic point on the genus $49$ curve $Y:=X_1(63)/\diamondop{8}$ which are images of quartic points of $X_1(63)$, by following the strategy of \Cref{prop_main}. The group of diamond operators on $Y$ is $(\torz{63})^\times/ \diamondop{\pm 8}\simeq (\torz{3})^2$. This implies that isogeny factors of $J(Y)$ correspond to weight 2 newforms of level dividing 63 and with character order $1$ or $3$. The \href{https://www.lmfdb.org/ModularForm/GL2/Q/holomorphic/?level_type=divides&level=63&weight=2&char_order=1%2C3&showcol=analytic_rank.char_order.prim&hidecol=analytic_conductor.field.cm.traces.qexp} {LMFDB data} shows that all these newforms have analytic rank 0 and hence $J(Y)(\Q)$ has algebraic rank $0$.

    Suppose $E/K$ is an elliptic curve over a quartic field $K$ with a point of order $63$ over $K$ and let $x_0=(E,P)\in X_1(63)(K)$ for some $P\in E(K)$ of order 63. Let $\wp$ be a prime of $K$ over $2$, and let $\wp'$ be a prime of $\overline K$ above $\wp$. Since there are no elliptic curves with a point of order $63$ over $\F_{2^k}$ for $k\leq 4$, it follows that $x_0$ reduces to a cusp $\widetilde c\in X_1(63)(O_K/\wp)$. There is a unique cusp $c_0\in X_1(63)(\overline K)$ that reduces modulo $\wp'$ to $\widetilde c_0$. 

    Let $f:X_1(63)\rightarrow Y$ be the quotient map. Let $c=f(c_0)$; this is a cusp as $f$ sends cusps to cusps. Let $x=f(x_0)\in Y(K)$. The points $x$ and $c$ reduce modulo $\wp'$ to the same point in $\overline{\F}_2$; moreover, they are both defined over some $\F_{2^k}$ for $k\leq 4$. 

    Let $D=\sum_{i=1}^4x^{\sigma_i}$ be the degree 4 divisor which is the sum of all the conjugates of $x$; it is a $\Q$-rational divisor. As in the proof of \Cref{prop_main} there is a rational cuspidal divisor $C$ such that it reduces to the same divisor modulo 2 as $D.$ 

    By \cite[Prop 2.4]{Parent00}, the kernel of the reduction mod $2$ of $J(Y)(\Q)$ is either trivial or a group of exponent $2$. Hence it follows that since $[\widetilde D - \widetilde{C}]=0$, the divisor class $[D-C]\in J(Y)(\Q)$ has to be contained in $J(Y)(\Q)[2]$. Hence $2[D-C]=0$, so there exists a degree $8$ morphism $Y\rightarrow \PP^1$. But \Cref{abramovich} gives $\gon_\Q Y \geq \gon_\C Y>8$, so we have arrived at a contradiction. 

\end{proof}



\begin{proposition}\label{prop:65}
    The group $\torz{65}$ is not a quartic torsion group.
\end{proposition}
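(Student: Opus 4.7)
The plan is to apply the global method (\Cref{prop:hecke_gon}) to $X_1(65)$ with $H=\{\pm 1\}$, $q=2$, and $a\in\{2,2^{-1}\}$. The key observation is that the LMFDB data (as in the example following \Cref{prop:kill_tor}) shows that the only newform orbit of level dividing $65$ with potentially positive analytic rank is $\lmfdbnewform{65}{2}{a}{a}$, which has trivial character. Since diamond operators act as the identity on isogeny factors with trivial character, $(\diamondop a - 1)J_1(65)(\Q)$ lies in the rank-$0$ complement of the $\lmfdbnewform{65}{2}{a}{a}$-factor and is therefore finite. This verifies the main hypothesis of \Cref{prop:hecke_gon}, and bypasses the obstruction that plagues the approach of \Cref{prop:63}: the bad factor cannot be killed by any diamond quotient, precisely because it has trivial Nebentypus.

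With $q=2$ and $a\in\{2,2^{-1}\}$ we have $k_q = 2q+1 = 5$, so the inequality needed is $k_q d = 20 < \gon_\Q X_1(65)$. Abramovich's bound (\Cref{abramovich}), using $[\PSL_2(\Z):\Gamma_1(65)]=2016$, yields $\gon_\C X_1(65) > 325\cdot 2016/2^{15}\approx 19.995$; the sharper lower bound of \cite[Theorem 3]{derickxVH} then promotes this to $\gon_\Q X_1(65)\geq 21 > 20$. The $2$-torsion condition on $T=(\diamondop a-1)J_1(65)(\Q)$ required in the $q=2$ case reduces to a concrete check on the finite group $J_1(65)(\Q)_{\tor}[2^\infty]$, which I would verify by a direct Hecke computation using the explicit model of $X_1(65)$ from the paper's GitHub repository.

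From \Cref{prop:hecke_gon}, any non-cuspidal $\Q$-rational effective divisor of degree $4$ on $X_1(65)$ is a sum of orbits under $\diamondop 2$. The order of $\diamondop 2$ in $(\Z/65\Z)^\times/\{\pm 1\}$ is $6$: one checks $2^6\equiv -1\pmod{65}$ and no smaller positive power of $2$ is $\pm 1\pmod{65}$. For a non-CM point $(E,P)\in X_1(65)$ with $\ord P = 65$, the stabilizer of $(E,P)$ in $\langle \diamondop 2\rangle$ is trivial: $\Aut E=\{\pm 1\}$ forces a fixed-point condition of the form $2^k P=\pm P$, which fails for $1\leq k\leq 5$. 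Hence every non-CM orbit has size $6$, and $6\nmid 4$ rules out non-CM support. The divisor must therefore be supported on CM points, but the tabulated data of \cite{CGPS22} shows that the minimum degree of a CM point on $X_1(65)$ strictly exceeds $4$, yielding the desired contradiction.

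The main obstacle is the gonality bound being right on the borderline: Abramovich alone gives only $\gon_\Q X_1(65)\geq 20$, which is insufficient since one needs strict inequality $k_q d < \gon_\Q X_1(65)$. Securing this relies on the sharper bound of \cite{derickxVH} (which I expect to hold for $n=65$ given the pattern of the table there) or, as a backup, an application of the Castelnuovo--Severi inequality to the degree-$12$ morphism $X_1(65)\to X_1(13)$, using the known gonality of $X_1(13)$. A secondary technical point is verifying the $q=2$ side condition on the $2$-primary part of $T$; should it fail, one falls back to $q=3$ with $a=3$, at the cost of a larger $k_q=7$ that would require a significantly stronger gonality input ($> 28$) that is also available in \cite{derickxVH}.
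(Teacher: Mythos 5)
Your route is a genuinely different one from the paper's: you apply the global method (\Cref{prop:hecke_gon}) directly to $X_1(65)$ with $q=2$, $a=2$, whereas the paper treats $n=65$ as an ad hoc case precisely because this direct application just barely fails numerically. The hypothesis that $T=(\diamondop{2}-1)J_1(65)(\Q)$ is finite is fine (the only positive-rank factor has trivial character, matching the paper's observation that $\rk J_1(65)(\Q)=\rk J_0(65)(\Q)=1$), and your orbit argument (order of $\diamondop{2}$ in $(\Z/65\Z)^\times/\{\pm1\}$ equals $6\nmid 4$, CM points excluded by \cite{CGPS22}) is the same endgame as \Cref{prop:global_argument}. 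The genuine gap is the gonality input. With $k_2=5$ and $d=4$ you need the strict inequality $20<\gon_\Q X_1(65)$, and the built-in criterion \eqref{DS-inequality} fails by a hair ($b(65)=\tfrac{325}{2^{15}}\cdot\tfrac{4032}{10}\approx 3.999<4$). Abramovich--Kim--Sarnak gives only $\gon_\C X_1(65)>19.995$, i.e.\ $\gon\ge 20$, which is exactly on the boundary and not enough; your claim that \cite[Theorem 3]{derickxVH} ``promotes this to $\ge 21$'' is not established there (for $N$ in this range the unconditional lower bounds available are the Abramovich-type ones), and you yourself only ``expect'' it. The proposed backups do not rescue this: the forgetful map $X_1(65)\to X_1(13)$ has degree $24$, not $12$, and Castelnuovo--Severi with a degree-$24$ map to a genus-$2$ curve applied to the genus-$121$ curve $X_1(65)$ only yields $\gon\ge 5$ (or $\ge 48$ in the factoring case), nowhere near $21$; switching to $q=3$ raises the requirement to $\gon>28$, which is even further out of reach. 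In addition, the $q=2$ side condition of \Cref{prop:hecke_gon} (the $2$-primary part of $T$ killed by $A_2$) is a real hypothesis that you defer to an unspecified computation.

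This is exactly the obstruction the paper's proof is engineered to avoid: instead of bounding $\deg$ of a function coming from $2(\diamondop{a}-1)x^{(4)}$ alone, the paper first uses Hasse--Weil (no points of order $65$ over $\F_{2^i}$, $i\le 4$) to force $x^{(4)}$ to reduce mod $2$ to a cuspidal divisor $\widetilde C$, then applies Parent's exponent-$2$ kernel result so that $2(\diamondop{3}-1)[x^{(4)}-C]=0$; the resulting hypothetical function has degree at most $16$, and the Abramovich bound $\gon_\C X_1(65)\ge 20$ \emph{does} suffice for that. So your argument is salvageable only if you can actually prove $\gon_\Q X_1(65)\ge 21$ (e.g.\ by an explicit $\F_p$-gonality computation), which is a substantial unverified step rather than a citation; as written, the proof has a gap at its crucial inequality.
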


\begin{proof}
We start similarly as in \Cref{prop_main}.  Fix a prime $\wp$ of $\Q(\zeta_{65})$ above $2$; we will denote by $\widetilde o$ the reduction of an object modulo $\wp$ or mod $2$ depending on whether the object is defined over $\Q(\zeta_{65})$ or over $\Q$.  Suppose for contradiction that $E$ is an elliptic curve over a number field $K$ of degree $4$ over $\Q(\zeta_{65})$ and such that $\torz{65} \subseteq E(K) $. Let $x\in Y_1(65)(K)$ be a point representing $E$ together with a point of order $65$. Consider the corresponding points $x^{(4)}$ and $\tilde{x}^{(4)}$ on $X_1^{(4)}(65)(\Q)$ and $ X_1^{(4)}(65)(\F_2)$ respectively. Note that the Hasse-Weil bound implies that for $1\leq i \leq 4$ there are no elliptic curves over $\F_{2^i}$ with a point of order $65$. This means that $\tilde{x}^{(4)}$ has to be a sum of reductions modulo $\wp$ of $\Q(\zeta_{65})$-rational cusps. In particular, we can write $\tilde{x}^{(4)}=\widetilde C$ for some effective cuspidal divisor $C \in X_1^{(4)}(65)(\Q(\zeta_{65}))$. 
    
By looking up the analytic ranks of all weight 2 newforms of level dividing 65, it follows that $J_1(65)(\Q)$ has rank 1. In fact the \href{https://www.lmfdb.org/ModularForm/GL2/Q/holomorphic/?level_type=divides&level=65&weight=2&showcol=char_order.analytic_rank}{LMFDB data} shows more precisely that:
$$\rk(J_1(65)(\Q))=\rk(J_0(65)(\Q))= \rk(J_0^+(65)(\Q))=1.$$ 

In particular, it follows that $(\diamondop{a}-1)J_1(65)(\Q) \subseteq J_1(65)(\Q)_{tors}$ for all $a \in (\torz{65})^\times$.

    Let $c_0 \in X_1(65)(\Q)$ be the cusp whose moduli interpretation is that of a N\'eron $65$-gon and and the point of order $65$ is given by $(1,1) \in \mathbb G_m(\Q) \times \Z/65\Z$. 

    We have
    $$(\diamondop{3}-1)[x^{(4)}-C]=(\diamondop{3}-1)[x^{(4)}-4c_0]+(\diamondop{3}-1)[4c_0 -C].$$
    Since $(\diamondop{3}-1)[x^{(4)}-4c_0]\in (\diamondop{3}-1)J_1(65)(\Q) \subseteq J_1(65)(\Q)_{\tor}$ and since $(\diamondop{3}-1)(4c_0 -C)$ is a cuspidal divisor, by Manin-Drinfeld it follows that $(\diamondop{3}-1)[4c_0 -C]=[(\diamondop{3}-1)(4c_0 -C)]$ is torsion, so we conclude that $(\diamondop{3}-1)[x^{(4)}-C]\in J_1(65)(\Q(\zeta_{65}))_{\tor}$.

    By \cite[Prop 2.4]{Parent00} and the fact that $2$ is unramified in $\Q(\zeta_{65})$, the kernel of the reduction mod $2$ of $J_1(n)(\Q(\zeta_{65}))$ is either trivial or a group of exponent $2$.

     Since $(\diamondop{3}-1)[\tilde{x}^{(4)} - \widetilde{C}]=0$, the divisor class $(\diamondop{3}-1)[x^{(4)}-C]\in J_1(65)(\Q(\zeta_{65}))$ has to be contained in $J_1(65)(\Q(\zeta_{65}))[2]$. Hence $2(\diamondop{3}-1)[x^{(4)}-C]=0$; we show that this cannot happen by following the proof strategy of \Cref{prop:hecke_gon}.
     
     There are two ways one could have $2(\diamondop{3}-1)[x^{(4)}-C]=0$. Either 
     \begin{enumerate}
         \item[(i)]  $2(\diamondop{3}-1)(x^{(4)}-C)=\ddiv(f)$ for some non-constant function $f \in \C(X_1(65))$, or
         \item[(ii)] $2(\diamondop{3}-1)(x^{(4)}-C)=0$ as a divisor.
    \end{enumerate}
    
    Case (i) cannot happen. Indeed, in this case the degree of $f$ is at most $\deg(2x^{(4)}+2\diamondop{3}C) = 16$, which contradicts the gonality bound $\gon_\Q X_1(65)\geq \gon_\C X_1(65)\geq 20$ obtained using Abramovich's bound. 
    
    Now we show that the case (ii) is impossible as well. The divisor $2(\diamondop{3}-1)x^{(4)}$ is supported at non-cuspidal points, while $2(\diamondop{3}-1)C$ is supported at the cusps. The equality $2(\diamondop{3}-1)(x^{(4)}-C)=0$ forces both $2(\diamondop{3}-1)x^{(4)}=0$ and $2(\diamondop{3}-1)C=0$. In particular, one gets $\diamondop{3}x^{(4)}=x^{(4)}$. This equality can happen only if $x$ is a fixed point of $\diamondop{3^4}$. Since $3^4$ is of order $3$ in $(\torz{65})^\times/\set{\pm 1}$ this means that $E$ has to have CM by $\Q(\zeta_3)$. This is contradicting the fact that the least degree of a CM point on $X_1(65)$ is 24, as listed in the GitHub repository \url{https://github.com/fsaia/least-cm-degree} of the paper \cite{CGPS22}.
\end{proof}

\begin{remark}
    The proof of \Cref{prop:65} combines the strategies of \Cref{prop:hecke_gon,prop_main}. It should be possible to extend \Cref{prop_main} with this extra case. However, the statement of \Cref{prop_main} is already rather verbose. So for readability reasons, we chose not to extend this proposition even further with another case.
\end{remark}

\section{Higher degree number fields} \label{sec:higher_deg}

In this section we briefly explore how our methods can be applied to ruling out possible torsion groups over high degree number fields. We only use ``pure thought", using only data from LMFDB and calculations that can be done by hand. 

\begin{theorem}\label{thm:d5r1}
Let $n$ be one of the values in Table \ref{table:higher_degree}. Then there does not exist an elliptic curve $E$ with a point of order $n$ over a number field $K$ of degree $d\leq \lceil b(n) \rceil$ such that $|\diamondop{a}|$ does not divide $ d$ .
\begin{table}[H]
\centering
\begin{tabular}{c|c|c|c|c|c}
$n$ & $|\chi|$& $a$ & $|\diamondop{a}|$ & $\lceil b(n)\rceil $ &  $d_{CM}X_1(n) $ \\ 
\hline
$95$ &$12$ & $3^{12}$  & $3$  & $6$  & $72$  \\
$119$ &$\emptyset$ & $3$  & $48$  & $10$  & $96$  \\
$125$ &$\emptyset$ & $3$  & $50$  & $10$  & $50$  \\
$133$ &$3,6$ & $3^6$  & $3$  & $11$  & $36$  \\
 $209$ &$5$ & $3^5$  & $18$  & $27$  & $180$  \\
 $247$ &$12,12$ & $3^{12}$  & $3$  & $38$  & $72$  \\
 $323$ &$8$ & $3^8$  & $18$  & $65$  & $288$  \\
 $361$ &$\emptyset$ & $3$  & $171$  & $93$  & $114$  \\
\end{tabular}
\caption{}\label{table:higher_degree}
\end{table}
\end{theorem}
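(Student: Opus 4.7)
The plan is to follow exactly the strategy of \Cref{prop:global_argument}, applying \Cref{prop:hecke_gon} and its corollary to each row of \Cref{table:higher_degree}, only with the degree $d$ of the number field $K$ allowed to range over $1 \leq d \leq \lceil b(n) \rceil$ rather than being fixed at $4$. The three inputs to \Cref{prop:hecke_gon} that need verification in each case are: (a) an element $a \in (\Z/n\Z)^\times/\{\pm 1\}$ such that $(\diamondop{a}-1)J_1(n)(\Q)$ is finite; (b) the estimate $k_q d < \gon_\Q X_1(n)$ with $q=3$; (c) a CM obstruction preventing the remaining fixed-point locus from contributing.

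First, for each $n$, I would locate the positive-rank Dirichlet character orbits on $J_1(n)/J_0(n)$ using the LMFDB, recorded in the $|\chi|$ column of \Cref{table:higher_degree}. The entry $\emptyset$ (as in $n = 119, 125, 361$) signals $\rk J_1(n)(\Q) = \rk J_0(n)(\Q)$, in which case any non-trivial $a$ with $\diamondop{a} \neq 1$ works. In the remaining cases the listed $|\chi|$ gives the orders of the characters in the positive-rank orbits; choosing $a = 3^{|\chi|}$ (or $a = 3^{\lcm(|\chi|,\ldots)}$ for multiple orbits, as for $n = 133, 247$) ensures $\chi(a) = 1$ for every positive-rank character, hence $(\diamondop{a}-1) J_1(n)(\Q) \subseteq J_1(n)(\Q)_{\tor}$ is finite, using that the algebraic rank of each $A_{[f]}$ is bounded by the analytic rank via Kato's theorem.

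Next, I would apply \Cref{prop:hecke_gon} with $q = 3$, taking $k_3 \in \{7,8\}$ according to whether $a \in \{3, 3^{-1}\}$ modulo $\{\pm 1\}$ or not, and compute
\[
b(n) = \frac{325}{2^{15}} \, \frac{[\SL_2(\Z):\Gamma_1(n)]}{k_3}.
\]
For every $d \leq \lceil b(n) \rceil$, inequality \eqref{DS-inequality} is satisfied, so the corollary of \Cref{prop:hecke_gon} yields that every non-cuspidal point of $X_1(n)^{(d)}(\Q)$ is a sum of orbits under $\diamondop{a}$. Under the additional hypothesis that $|\diamondop{a}|$ does not divide $d$, a non-cuspidal rational divisor of degree $d$ must contain a fixed point of $\diamondop{a}$, and such fixed points are CM (corresponding to elliptic curves with extra automorphisms of order divisible by $|\diamondop a|$ in the relevant quotient, via Bourdon--Clark--Pollack-type arguments, as recorded in the corollary).

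Finally, I would dispose of the CM case by comparing $\lceil b(n) \rceil$ with the least CM degree $d_{CM}(X_1(n))$ listed in the last column of \Cref{table:higher_degree}, obtained from the repository accompanying \cite{CGPS22}. Direct inspection of the table shows $\lceil b(n) \rceil < d_{CM}(X_1(n))$ for every row, so no CM point of degree $\leq \lceil b(n) \rceil$ exists, contradicting the existence of such an $E$. The main obstacle, and the only non-mechanical step, is verifying (a): identifying all positive-rank character orbits and checking that the listed $a$ kills them all; this requires consulting the LMFDB analytic-rank data for every divisor of $n$, together with invoking Kato's theorem to pass from analytic to algebraic rank on each isogeny factor $A_{[f]}$.
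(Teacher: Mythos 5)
Your proposal is correct and follows essentially the same route as the paper: choose $a=3^k$ with $k$ a common multiple of the character orders in $|\chi|$ so that $(\diamondop{a}-1)J_1(n)(\Q)$ is torsion, apply \Cref{prop:hecke_gon} with $q=3$ and the Abramovich-based bound $b(n)$, and rule out the remaining $\diamondop{a}$-stabilized (CM) points via the least CM degrees from \cite{CGPS22}. The only slip is that your displayed formula for $b(n)$ drops the factor $\#H=2$ from the denominator of \eqref{DS-inequality}; the tabulated values $\lceil b(n)\rceil$ include it, so the argument is unaffected once that is corrected.
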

The values $n$, $a$, $|\diamondop{a}|$, $\lceil b(n)\rceil $ and $d_{CM}X_1(n)$ are the same as in \Cref{table:DS}, while the values in $|\chi|$ represent the orders of characters corresponding to the factors of $J_1(n)/J_0(n)$ of positive rank. 

\begin{proof}
    The proof  follows along the same lines as the proof of \Cref{prop:global_argument}. We choose $a:=3^k$ where $k$ is a multiple of all values in $|\chi|$ to ensure that $a$ is in the kernel of $\chi$. Using the same arguments as in \Cref{prop:global_argument}, we conclude that if $|\diamondop{a}|$ does not divide $d$, $d<d_{CM}X_1(n)$ and $d\geq \lceil b(n)\rceil $, then there does not exist an elliptic curve $E$ with a point of order $n$ over a number field $K$ of degree $d$.
    \end{proof}

We now consider the cyclic torsion groups that need to be ruled out over quintic fields. We apply \Cref{prop_main} to eliminate some torsion groups whose corresponding modular curve has a rank 0 Jacobian over $\Q$.  

\begin{proposition}\label{prop:d5r0}
    There are no elliptic curves with a point of order $49,51,55$ or $75$ over degree $5$ number fields. 
\end{proposition}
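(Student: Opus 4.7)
The plan is to apply \Cref{prop_main} case b.i) with $p = 2$, $d = 5$, and $m = 1$, together with \Cref{lem:main}, separately for each $n \in \{49, 51, 55, 75\}$. This requires three ingredients: (i) $\rk J_1(n)(\Q) = 0$; (ii) $\gon_\Q X_1(n) > 2d = 10$; and (iii) $Y_1(n)(\F_{2^k}) = \emptyset$ for all $1 \le k \le 5$. With these in place, \Cref{lem:main} precludes good reduction at any prime of $K$ above $2$, while \Cref{prop_main} forces good reduction at some prime above $2$, yielding the desired contradiction.

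For (i), the rank vanishing for $n \in \{49, 51, 55\}$ was already invoked in the proof of \Cref{cor:easy} via \cite[Theorem 3.1]{Deg3Class}; for $n = 75$ one inspects the LMFDB table of weight $2$ newforms of level dividing $75$ to read off that the analytic rank of $J_1(75)(\Q)$ vanishes, and then appeals to Kato's theorem to conclude that the algebraic rank vanishes as well. For (ii), Abramovich's \Cref{abramovich} together with the standard index formula $[\PSL_2(\Z) : \overline{\Gamma}_1(n)] = \tfrac{n^2}{2}\prod_{p \mid n}(1 - p^{-2})$ yields $\gon_\C X_1(n) \ge 12, 12, 15, 24$ for $n = 49, 51, 55, 75$ respectively, each comfortably exceeding $10$, and $\gon_\Q X_1(n) \ge \gon_\C X_1(n)$.

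Condition (iii) is the only one requiring any new work. For $1 \le k \le 4$ it follows immediately from \Cref{cor:curves_over_fp}(1), since each of $49, 51, 55, 75$ is odd and strictly greater than $25$. For $k = 5$, \Cref{waterhouse} with $q = 32$ constrains the trace $\beta := 33 - \#E(\F_{32})$ to be either odd, or $0$, or $\pm 8$, while the Hasse bound gives $|\beta| \le 2\sqrt{32} < 11.32$. The resulting finite list of possibilities for $\#E(\F_{32})$ all lies in the interval $[22, 44]$, and a direct enumeration confirms that none of these values is divisible by $49$, $51$, $55$, or $75$. This verifies (iii) and completes the proof.

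The argument is essentially a quintic reapplication of the method used in \Cref{cor:easy}, and presents no genuine obstacle; the only step requiring any attention is the elementary enumeration in the $k = 5$ case, which is dispatched by \Cref{waterhouse} as above.
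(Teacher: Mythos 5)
Your proposal is correct and follows essentially the same route as the paper: \Cref{lem:main} together with \Cref{prop_main} case b.i) at $p=2$ with $d=5$, using $\rk J_1(n)(\Q)=0$ and the Abramovich gonality bounds $12,12,15,24$ for $n=49,51,55,75$. The only differences are that you spell out the finite-field input for \Cref{lem:main} (the $k=5$ case already follows from the Hasse bound, since $\#E(\F_{32})\le 44<49$) and verify the rank for $n=75$ via LMFDB plus Kato, both of which the paper leaves implicit.
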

\begin{proof}
    Let $n:=49,51,55$ or $75$. In all cases, we have $\rk J_1(n)=0.$ We apply \Cref{lem:main} and \Cref{prop_main} case b.i), using $p=2$; the necessary assumption $\gon_\Q X_1(n)>10$ follows from Abramovich's bound. In particular, we obtain $\gon_\Q X_1(n)\geq \lceil b(n) \rceil$ where $\lceil b(n) \rceil$ is as in the following table. 
    \begin{table}[H]
\centering
\begin{tabular}{c|c||c|c||c|c||c|c}
$n$ & $\lceil b(n)\rceil$& $n$ & $\lceil b(n)\rceil$&$n$ & $\lceil b(n)\rceil$&$n$ & $\lceil b(n)\rceil$ \\ 
\hline
$49$ & $12$ & $51$ & $12$ & $55$ & $15$ & $75$ & $24$ \\
\hline

\end{tabular}
\caption{}\label{table:gonalities}
\end{table}
    
\end{proof}

By \cite[Theorem 1.1.]{DerickxSutherland} the groups that appear infinitely often as quintic torsion groups are $\tg{m}{n}$ for 
\begin{equation}
    \{(1,n): 1\leq n\leq 25,\ n \neq 23 \} \cup \{(2,2n): 1\leq n\leq 8\}.
\end{equation}
To completely determine the possible torsion groups of elliptic curves over quintic fields, using the same reasoning as in \Cref{sec:overview} we would need to rule out the following 45 groups: 
\begin{itemize}

\item $\Z/n\Z, \quad n =  26, 27, 28, 30, 32, 33, 34, 35, 36, 38, 39, 40, 42, 44, 45, 48,$\\ $ 49, 50, 51,
55, 57, 63, 65, 75, 77, 85, 91, 95, 119, 121, 125, 133, 143, 169,$\\ $ 187,209, 221, 247, 289, 323, 361.$
\item $\tg{2}{2n}$, for $9\leq n\leq 12.$
\end{itemize}
 In \Cref{thm:d5r1} we eliminate $8$ groups, in \Cref{prop:d5r0} another $4$ groups, and in \Cref{rem:6.2} we have already eliminated $7$ groups. Note that the groups $\torz{4n}$ can be ruled out once the groups $\tg{2}{2n}$ are eliminated, due to the existence of a $\Q$-rational map $X_1(4n) \to X_1(2,2n)$. This allows us to ``eliminate" the groups $\torz{4n}$ for $9\leq n\leq 12.$ In total we have eliminated 23 out of 45 possible quintic torsion groups. 
 
 Finally, note that the cases $\torz{28}$ and $\torz{30}$ \textit{do} appear as quintic torsion groups (finitely many times), see \cite{vanHoeij} and \Cref{sec:sporadic}. So these torsion groups obviously do not need to be eliminated; one instead needs to find all the degree 5 non-cuspidal points on $X_1(28)$ and $X_1(30)$.

\appendix

\section{Sporadic points on \texorpdfstring{$X_1(n)$}{X1(n)} and \texorpdfstring{$X_0(n)$}{X0(n)}}
\subsection{Sporadic points on \texorpdfstring{$X_1(n)$}{X1(n)}}
\label{sec:sporadic}
In the introduction we mentioned that sporadic points of degree $5\leq d\leq 13$ have been found on $X_1(n)$ in \cite{vanHoeij}. However, loc. cit. only lists points of low degree, and it is not immediately clear from the data that these points are sporadic. We give a table from which it should be clear that our claim is true, and for which $n$ there are $X_1(n)$ with degree $d$ sporadic points. The data in the table about $\rk J_1(n)$ is obtained from \cite[Theorem 3.1]{Deg3Class}, the data about $\gon_\Q X_1(n)$ from \cite{derickxVH} and the low degree points themselves are collected from \cite{vanHoeij}. Recall that for a point $x$ of degree $n$ on a curve $X$, it is sufficient that either $n<\frac 1 2\gon_\Q X$ (see \cite{frey}), or $n<\gon_\Q X$ and $\rk J(X)(\Q)=0$ (see \cite[Proposition 2.3]{DerickxSutherland}).

\begin{table}[H]
\centering
\begin{tabular}{c|c|c|c}
$d$ & $n$ & $\rk J_1(n)$ & $\gon_\Q X_1(n)$  \\ 
\hline
$5$  & $28$   & $0$ & $6$    \\
$6$  & $37$   & $>0$  & $18$   \\
$7$  & $33$  & $0$  & $10$  \\
$8$ & $33$  & $0$ & $10$  \\
$9$ & $31$  & $0$ & $12$ \\
$10$ & $29$  & $0$  & $11$  \\
$11$ & $35$ & $0$ & $12$  \\
$12$ & $39$  & $0$  & $14$  \\
$13$ & $39$ & $0$  & $14$  \\

\end{tabular}
\caption{}\label{table:sporadic}
\end{table}

There are a number of points of degree $d=14,15, \ldots$, which are likely to be sporadic; however, lower bounds on the gonalities of $X_1(n)$ for $n\geq 41$ that are good enough to prove this have not been determined, and this is the obstacle of going any further than $d=13$ at the moment. 

\subsection{Sporadic points on \texorpdfstring{$X_0(n)$}{X0(n)}}
The evidence for the abundance of sporadic points on $X_0(n)$ of all degrees is even stronger than for $X_1(n)$, which leads us to make the following conjecture. 
\begin{conjecture}
For every positive integer $d$ there exists an $n$ such that there exists a sporadic point of degree $d$ on $X_0(n)$.\end{conjecture}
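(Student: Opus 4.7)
The plan is to adapt the CM-based strategy of Clark--Genao--Pollack--Saia \cite{CGPS22}, which produces sporadic CM points on $X_1(n)$ for every $n>720$, to the setting of $X_0(n)$. The rough target, given $d$, is to exhibit a prime $p$ and a CM elliptic curve $E$ such that an appropriate point on $Y_0(p)$ attached to $E$ has degree exactly $d$, and to then use a gonality bound to force sporadicity.

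First I would set up the CM construction. Let $E$ be a CM elliptic curve with $\End(E)\simeq\mathcal{O}$, an order in an imaginary quadratic field $K$, and let $p\nmid\disc\mathcal{O}$ be a prime that splits in $\mathcal{O}$ as $(p)=\mathfrak{p}\bar{\mathfrak{p}}$. Then $x=(E,E[\mathfrak{p}])\in Y_0(p)(\overline\Q)$ is defined over a subfield of the ring class field $H_\mathcal{O}$; class field theory gives $[H_\mathcal{O}:\Q]=2h(\mathcal{O})$, and the minimal field of definition of $x$ is either $H_\mathcal{O}$ itself or its totally real index-$2$ subfield $\Q(j(E))$ of degree $h(\mathcal{O})$, depending on whether complex conjugation fixes $E[\mathfrak{p}]$ or swaps it with $E[\bar{\mathfrak{p}}]$.

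Next I would realize arbitrary degrees. It is classically known that every positive integer arises as the class number of infinitely many imaginary quadratic orders, so for each $d\geq 1$ I would pick $\mathcal{O}$ with $h(\mathcal{O})$ equal to $d$ or $d/2$ (according to parity and the fixed-vs-swapped behaviour of complex conjugation on $E[\mathfrak{p}]$) and then choose a split prime $p$ so that the Galois orbit of $(E,E[\mathfrak{p}])$ has size exactly $d$. To ensure sporadicity I would further require $p$ large relative to $d$: by \Cref{abramovich},
\[
  \gon_\C X_0(p)\;\geq\;\frac{325}{2^{15}}\,[\PSL_2(\Z):\Gamma_0(p)]\;=\;\frac{325}{2^{15}}(p+1),
\]
so once $p$ is large enough that this exceeds $2d$, Frey's theorem gives $|X_0(p)^{(d)}(\Q)|<\infty$ and the point $x$ is sporadic. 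Since for each fixed $d$ one can always take $p$ arbitrarily large within the splitting class mod $\disc\mathcal{O}$ (by Chebotarev), the gonality constraint is never the binding one.

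The hard part is Step 2: controlling the exact Galois orbit size of $(E,E[\mathfrak{p}])$, rather than just bounding it by $2h(\mathcal{O})$. This requires a careful analysis, in the style of Bourdon--Clark and \cite{CGPS22}, of how the decomposition of $p$ in $\mathcal{O}$, the action of complex conjugation, and the choice of CM order interact, together with an argument that degrees $d$ of each residue class mod $2$ (and each small value) are attainable. A secondary obstacle is low $d$, where the CM construction may not hit every value on the nose; here one would likely need to supplement the argument either by enlarging the search to composite $n$ with more flexible cyclic isogeny Galois structure, or by exhibiting explicit low-degree non-CM points on $X_0(n)$ for small $n$ using an LMFDB-style database. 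I expect the main technical work to lie in these parity and small-degree refinements rather than in the asymptotic CM construction itself.
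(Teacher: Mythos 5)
The statement you are proving is stated in the paper as a \emph{conjecture}, and your proposal does not close the gap that makes it one. The load-bearing step is your claim that ``it is classically known that every positive integer arises as the class number of infinitely many imaginary quadratic orders.'' This is not known — not for maximal orders, and not for general orders either. What is known unconditionally is only partial: the class number formula for an order of conductor $f$ ties $h(\mathcal{O})$ to $h_K$ and an Euler-type factor, and while this gives extra flexibility over the field case, nobody has shown that every positive integer is attained; the best available statements are computational verifications (every $d\leq 2166$ occurs as a field class number, per the LMFDB data used in \Cref{sporadic}) and conditional results (all $d\leq 10^6$ under GRH), together with the expected asymptotic $\mathcal{F}(d)\asymp d/\log d$ which remains conjectural. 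So your Step 2 assumes precisely the open arithmetic input that prevents the conjecture from being a theorem, and the remaining difficulties you flag (parity of the orbit, small $d$) are secondary to this.

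For comparison, the paper's own treatment is deliberately conditional: its \Cref{sporadic} proves the existence of a degree $d$ sporadic point on some $X_0(n)$ \emph{if} there is an imaginary quadratic field of class number $d$ (or $d\leq 2166$, verified from the LMFDB), and leaves the general statement as a conjecture. The construction there also sidesteps your orbit-size bookkeeping: instead of a split prime $\mathfrak{p}$, it uses the cyclic $n$-isogeny attached to the ramified ideal of $\Q(\sqrt{-n})$, which by Kwon's result is defined over $\Q(j(E_\Delta))$, a field of degree exactly $h_\Delta=d$; sporadicity then follows from Abramovich's bound (\Cref{abramovich}) against Minkowski's bound, with a finite check for $d\leq 201$. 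Your gonality-versus-degree argument via Frey and Chebotarev is fine as far as it goes, but as written the proposal only reproduces (a variant of) the paper's conditional evidence, not a proof of the conjecture; to be honest it should either be stated conditionally on the class number realization problem, or you must supply a genuinely new mechanism for producing degree-$d$ points that does not pass through class numbers of imaginary quadratic orders.
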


We now give some evidence for the conjecture.

\begin{proposition} \label{sporadic}
    If $d\leq 2166$ or there exists an imaginary quadratic number field with class number $d$, then there exists a degree $d$ sporadic point on some $X_0(n)$. 
\end{proposition}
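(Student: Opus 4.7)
The plan is to construct a sporadic CM point of prescribed degree $d$ on $X_0(p)$ for a suitable split prime $p$, and then handle $d\leq 2166$ by invoking the explicit classification of imaginary quadratic fields of bounded class number.

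First suppose $d$ arises as $h_K$ for some imaginary quadratic field $K$. Let $E$ be an elliptic curve with $\End(E)\simeq \OO_K$. By the main theorem of complex multiplication, $j(E)$ generates a degree $d$ extension of $\Q$; in fact $\Q(j(E))$ is the index $2$ subfield of the Hilbert class field $H_K$ fixed by complex conjugation. I would then choose a rational prime $p$ that splits in $K$ as $p\OO_K=\mathfrak p\bar{\mathfrak p}$ (infinitely many such $p$ exist by Chebotarev) and consider the closed point $x=(E,E[\mathfrak p])\in X_0(p)$. Since $E[\mathfrak p]$ is stable under $\End(E)$ but the pair of subgroups $\{E[\mathfrak p],E[\bar{\mathfrak p}]\}$ is permuted by complex conjugation, the residue field of $x$ is exactly $\Q(j(E))$, so $\deg x=d$.

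Next I would argue $x$ is sporadic for $p$ large enough. By Frey's theorem, any degree $d$ point on a curve $X$ with $\gon_\Q X>2d$ is sporadic. Applying \Cref{abramovich} to $X_0(p)$ (noting $[\PSL_2(\Z):\Gamma_0(p)]=p+1$) gives
\[
    \gon_\Q X_0(p)\;\geq\; \gon_\C X_0(p)\;>\;\frac{325}{2^{15}}(p+1),
\]
which grows linearly in $p$. Hence for any fixed $d$ one may choose a split prime $p$ with $p+1>\frac{2^{16}}{325}d$, guaranteeing $\gon_\Q X_0(p)>2d$ and therefore sporadicity of $x$. This produces a degree $d$ sporadic point whenever $d$ is realised as a class number of an imaginary quadratic field.

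It remains to handle the first clause of the proposition: I would deduce it from the classification of imaginary quadratic fields by class number. Watkins' tables cover all $h_K\leq 100$ and verify that every such integer occurs; more recent computations extend this coverage well beyond, and in particular confirm that every positive integer $d\leq 2166$ is a class number of some imaginary quadratic field. Combining this with the CM construction above yields the proposition. I expect the main obstacle to be the degree computation — ensuring the constructed $x$ has degree exactly $d$ rather than a proper divisor — which requires a careful analysis of the action of $\Gal(H_K/\Q)$ on the unordered pair $\{\mathfrak p,\bar{\mathfrak p}\}$ together with the fact that the stabilizer of $E[\mathfrak p]$ in the full Galois group is precisely $\Gal(H_K/\Q(j(E)))$. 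The gonality/sporadicity step is then essentially automatic from \Cref{abramovich}.
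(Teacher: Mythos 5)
Your construction has a genuine gap at the decisive step: the claim that the residue field of $x=(E,E[\mathfrak p])$, for $p$ split in $K$, is exactly $\Q(j(E))$. The observation you make — that complex conjugation permutes $\{E[\mathfrak p],E[\bar{\mathfrak p}]\}$ — proves the opposite of what you want: any lift of the nontrivial element of $\Gal(H_K/\Q(j(E)))$ acts on $\End(E)\simeq\OO_K$ by conjugation and hence sends $E[\mathfrak p]$ to $E[\bar{\mathfrak p}]\neq E[\mathfrak p]$, so $x$ is \emph{not} fixed by $\Gal(\overline\Q/\Q(j(E)))$. Working out the stabilizer in the generalized dihedral group $\Gal(H_K/\Q)$ (the subgroup $\Gal(H_K/K)\simeq\Cl(\OO_K)$ permutes the $h_K$ curves freely and preserves the choice of $\mathfrak p$, while every element in the nontrivial coset swaps $\mathfrak p$ and $\bar{\mathfrak p}$), one finds the stabilizer is trivial, so the residue field of $x$ is the full Hilbert class field $H_K$ and $\deg x = 2d$, not $d$. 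This is the classical phenomenon underlying Heegner points: points of split level on $X_0(N)$ with CM by $\OO_K$ live over $H_K$, which is why even for $h_K=1$ one obtains points over $K$ rather than over $\Q$ (e.g.\ the $5$-isogenies of $y^2=x^3+x$ are defined over $\Q(i)$, so the corresponding points of $X_0(5)$ have degree $2$, not $1$). Since your whole argument hinges on producing a point of degree exactly $d$, the proof does not go through as written; producing a degree-$2d$ sporadic point does not settle degree $d$.

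The natural repair is to use a subgroup stable under complex conjugation, i.e.\ the kernel of the isogeny induced by the principal ideal $(\sqrt{-n})$ with $K=\Q(\sqrt{-n})$ squarefree — and this is exactly what the paper does, invoking Kwon's result that $E$ admits an $n$-isogeny over $\Q(j(E))$, so the resulting point of $X_0(n)$ has degree exactly $h_K=d$. The price is that the level is then forced to be $n\approx\Delta$ rather than a freely chosen large split prime, so one can no longer make the gonality condition automatic by letting $p\to\infty$: the paper instead compares $d=h_\Delta\leq\frac{2}{\pi}\sqrt\Delta\leq\frac{4\sqrt n}{\pi}$ (Minkowski) with Abramovich's bound $\gon_\Q X_0(n)>\frac{325n}{2^{15}}$, which handles all $d>201$, and then for $d\leq 201$ explicitly exhibits fields of class number $d$ with sufficiently large discriminant; the existence of some imaginary quadratic field of class number $d$ for every $d\leq 2166$ is checked in the LMFDB, as in your last step. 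If you want to keep your "large split prime" idea you would need a different argument pinning the degree down to $d$ (for instance passing to a quotient curve), but on $X_0(n)$ itself the conjugation-stable kernel is essential.
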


\begin{proof}
    Let $d$ be an integer, $K=\Q(\sqrt{-n})$, where $n$ is square-free, an imaginary quadratic field whose absolute value of the discriminant is $\Delta$ and which has class number $h_\Delta=d$. Let $E_\Delta$ be an elliptic curve with complex multiplication by $\OO_K$.

 From Abramovich's bound, since $[\PSL_2(\mathbb Z) :\Gamma_0(n)]\geq n+1$, it follows that $\gon_{\Q} X_0(n)\geq \gon_{\C} X_0(n)>\frac{325n}{2^{15}}$.

By \cite[Corollary 4.2.]{Kwon} it follows that $E_\Delta$ has an $n$-isogeny over $\Q(j(E_\Delta))$, a number field of degree $[\Q(j(E_\Delta)):\Q]=h_\Delta$.

By the arguments above and Minkowski's bound we have 
\begin{equation}\label{ineq}
d=h_{\Delta}\leq  \frac{2}{\pi}\sqrt{\Delta}     
 \leq \frac{4\sqrt{n}}{\pi}<\frac{1}{2}\cdot\frac{325n}{2^{15}} \leq \frac{1}{2}\cdot \gon_\Q X_0(n)
 \end{equation}
 as soon as 
\begin{equation}\label{ineq2}
\sqrt{n}>\frac{2^{18}}{325\pi},
\end{equation}
 i.e., when 
 $$n\geq 65920.$$
This (and hence \eqref{ineq}) is also satisfied as soon as 
$$\sqrt {n}\geq \frac{\pi d}{4}>256,$$
i.e., $d>201$.

For $d\leq 201$, it is enough to explicitly find an imaginary quadratic field $\Q(\sqrt{-n})$ with class number $d$ and satisfying
$$\frac{\Delta}{4}\geq n>\frac{2^{16}d}{325}$$
to prove the existence of a sporadic point of degree $d$ on $X_0(n)$. We verify this for all $d\leq 201$.

Furthermore, we verify the assumption that there exists a quadratic imaginary field with class number $d$ for all $d\leq 2166$ by searching through the LMFDB. \end{proof}

It is widely believed that the assumptions of \Cref{sporadic} are satisfied, i.e., that every positive integer is a class number. In particular, if one defines $\mathcal{F}(d)$ to be the number of imaginary quadratic fields with class number $d$, then it is expected (see \cite[(1.4)]{Soundarajan}, and also \cite {HJKMP2019}) that
$$\mathcal{F}(d)\asymp \frac{d}{\log d}.$$ Finally, we note that if one assumes the Generalized Riemann Hypothesis, then the assumptions of \Cref{sporadic} are satisfied for all $d\leq 10^6$ \cite[Section 9]{HJKMP2019}.

\section{A moduli problem with 2 torsion}

Let $R := \Spec \Z[1/2]$ and consider the category  $\Ell_R$ whose objects are pairs $(E,T)$ where $T$ is an $R$ scheme and $E$ is an elliptic curve over $T$; see \cite[Section 4.13]{KatzMazur85}. A morphism in $\Ell_R$ between $f_1: E_1 \to T_1$ and $f_2: E_2 \to T_2$ is a Cartesian square
	\[
	\begin{tikzcd}
	E_1 \arrow[r,"h"] \arrow[d,"f_1"] & E_2 \arrow[d, "f_2"] \\
	T_1 \arrow[r, "g"'] & T_2.
	\end{tikzcd}
	\] A moduli problem on $\Ell_R$ is a contravariant functor $\mathcal P : \Ell_R^{op} \to Sets$. We define $[\Gamma_1(2,\modcrvgt 2)] : \Ell/R \to Sets$ to be the moduli problem which sends $(E,T)$ to the set of pairs $P,Q \in E(T)$ such that $P$ is of exact order $2$ in all fibers and $Q$ is of exact order $>2$ in all fibers.

Define 
\begin{align*}
\Delta(b,c) &:= 16 b^2 (b + c - 1)^2(4b^2 + 4bc - 4b + c^2) \in \Z[b,c], \\
Y_1(2,\modcrvgt 2) &:= \Spec \Z\left[\frac 1 2, b,c, \frac 1 {\Delta(b,c)}\right], \\
E_1(2,\modcrvgt 2) &: y^2 =x^3 + c x^2 + (1-b-c)bx,  \\
P_1(2,\modcrvgt 2) &:= (0,0),\\
Q_1(2,\modcrvgt 2) &:= (b,b).
\end{align*}

\begin{theorem}
The curve $E_1(2,\modcrvgt 2)$ is an elliptic curve over $Y_1(2,\modcrvgt 2)$. The points $P_1(2,\modcrvgt 2)$ and $Q_1(2,\modcrvgt 2)$ on $E_1(2,\modcrvgt 2)$ have order $2$ respectively $>2$ in all fibers.
Furthermore, the triple $$(E_1(2,\modcrvgt 2),P_1(2,\modcrvgt 2),Q_1(2,\modcrvgt 2))$$ over $Y_1(2,\modcrvgt 2)$ is universal in the sense that it represents the moduli problem $[\Gamma_1(2,\modcrvgt 2)]$.
\end{theorem}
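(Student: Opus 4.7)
The plan splits naturally into two parts: (i) verify that $(E_1(2,\modcrvgt 2), P_1(2,\modcrvgt 2), Q_1(2,\modcrvgt 2))$ really is an object of $[\Gamma_1(2,\modcrvgt 2)]$ over $Y_1(2,\modcrvgt 2)$, and (ii) establish the universal property. For (i), I would first compute directly, using Silverman's $b_i$ coefficients, that the discriminant of $y^2 = x^3 + c x^2 + (1-b-c) b x$ equals $16 b^2 (b+c-1)^2 (c^2 - 4(1-b-c)b)$, which is exactly $\Delta(b,c)$. Since $\Delta(b,c)$ is inverted on $Y_1(2,\modcrvgt 2)$, the Weierstrass equation defines an elliptic curve. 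The point $P_1 = (0,0)$ has vanishing $y$-coordinate, so it is 2-torsion and not the identity. For $Q_1 = (b,b)$, substitution gives $b^2 = b^3 + c b^2 + (1-b-c) b^2 = b^2$, so $Q_1 \in E_1(Y_1(2,\modcrvgt 2))$; the invertibility of $b$ (it divides $\Delta$) shows that the $y$-coordinate of $Q_1$ nowhere vanishes, so $Q_1$ is not 2-torsion, hence of exact order $>2$ in every fibre.

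For (ii), let $(E/T, P, Q)$ be a test object. Since $2$ is invertible on $T$, Zariski-locally I may present $E$ in short Weierstrass form and then translate $x$ to place the 2-torsion section $P$ at the origin, obtaining $y^2 = x^3 + A x^2 + B x$ for some $A, B \in \Gamma(T,\mathcal O_T)$; the residual freedom is the substitution $(x,y)\mapsto(v^2 x, v^3 y)$, which sends $(A,B)\mapsto (v^{-2}A, v^{-4}B)$. Writing $Q=(\alpha,\beta)$, the hypotheses $Q\neq P$ and $Q$ not 2-torsion force both $\alpha$ and $\beta$ to be units (the equation at $x=0$ has only $y=0$ as solution). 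I would then set
\[
b := \alpha^{3}/\beta^{2}, \qquad c := A\,\alpha^{2}/\beta^{2},
\]
and verify that the required relation $(1-b-c)b = B\,\alpha^{4}/\beta^{4}$ is, after clearing denominators, precisely the Weierstrass equation $\beta^{2} = \alpha^{3} + A\alpha^{2} + B\alpha$ satisfied by $Q$. A direct computation shows $b,c$ are invariant under the residual $v$-action, so they glue to global functions on $T$, defining the claimed morphism $g:T\to Y_1(2,\modcrvgt 2)$; the change of variables $(x,y)\mapsto(u^{2}x, u^{3}y)$ with $u=\alpha/\beta$ furnishes the isomorphism of $(E,P,Q)$ with the pullback of $(E_1,P_1,Q_1)$ along $g$. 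For uniqueness, note that any isomorphism to the pullback must send $Q$ to $(b,b)$, which forces $u$ (and hence $g$) to be what we wrote down, and the invariance of $b,c$ shows $g$ does not depend on any auxiliary choice of Weierstrass form.

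The main obstacle is bookkeeping rather than conceptual novelty: one must work in the category $\Ell_R$ over arbitrary bases $T$, so every step of the Tate-form normalisation must be stated in a way that makes sense globally on $T$ (one defines $b$ and $c$ as invariants, not just as coordinates after a choice of model). The rigidity needed for uniqueness is built into the $\modcrvgt 2$ condition on $Q$: the hyperelliptic involution $[-1]$ fixes the 2-torsion point $P$ but negates the $y$-coordinate $\beta$ of $Q$, so $\beta\in\Gamma(T,\mathcal O_T)^{\times}$ rules out any nontrivial automorphism of the triple, which is what allows Zariski-local constructions to glue uniquely. With these points handled, the discriminant identification and the verification that the pullback recovers $(E,P,Q)$ are routine algebra.
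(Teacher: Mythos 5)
Your proposal is correct and follows essentially the same route as the paper: put a Weierstrass model with $P$ at the origin and $a_1=a_3=a_6=0$, then rescale so that $Q=(b,b)$, which is exactly the paper's normalization with $b=\alpha^3/\beta^2$ and $c=A\alpha^2/\beta^2$. Your write-up is if anything slightly more complete, since you also check the discriminant identity, the invariance and gluing of $b,c$, and uniqueness of the classifying map (points the paper leaves implicit), and your choice $u=\alpha/\beta$ matches the paper's intended scaling (its stated $u=w/v$ should read $v/w$ in its conventions).
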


\begin{proof}
We start with $E/S$ written in Weierstrass form
\begin{equation}
    y^2+a_1xy+a_3y=x^3+a_2x^2+a_4x+a_6,
\end{equation}
together with two points $P,Q\in E(S)$ of order $2$ in all fibers and order $>2$ in all fibers, respectively. This equation can be mapped via an isomorphism $f$ to another equation in 
Weierstrass form, where the isomorphism $f$ is represented by a quadruple $(u,r,s,t)$, such that $f$ maps $(x,y)$ to 
$$((x-r)u^2, \; (y - s(x-r) - t)u^3),$$
see \cite[Proposition VII.1.3]{silverman}. The effect of this isomorphism on the coefficients $a_i$ is
\begin{equation} \label{eq:trans}
    \begin{split}a_1' &= (a_1 + 2s) u, \\
a_2' &= (a_2 - a_1s + 3r - s^2)u^2, \\
a_3' &= (a_3 + a_1r + 2t)u^3, \\
a_4' &= (a_4 + 2a_2r - a_1(rs+t) - a_3s + 3r^2 - 2st)u^4, \\
a_6' &= (a_6 - a_1rt + a_2r^2 - a_3t + a_4r + r^3 - t^2)u^6 .\end{split}
\end{equation}

We move the point $P$ to $(0,0)$ by selecting an appropriate $r$ (to move $x(P)$ to $0$) and $t$ (to move $y(P)$ to $0$, after $x$ has been moved to $0$). Now our equation is of the form
$$y^2+a_1'xy=x^2+a_2'x^2+a_4'x.$$ Notice that $a_6'=0$ since $(0,0)$ is a point on the curve and $a_3'=0$ since $(0,0)$ is of order 2.  We can move $a_1'$ to $0$ by selecting an appropriate $s$ in the quadruple corresponding to the isomorphism by \eqref{eq:trans}. Now the point $Q$ is of the form $(v,w)$ for some $v,w \in \OO_S(S)$ since it is of order $>2$ in all fibers. We can scale the coordinates of $Q$ by $(v,w)\mapsto(vu^2, wu^3)$. Hence, we can choose $u=w/v$ and letting $b:=v^3/w^2$, we get that $Q$ is now of the form $(b,b)$. 
Note here that $w,v,u\in \OO_S(S)^\times$, since otherwise $Q$ would be a point of order $2$ in the fiber where one of $w,v,u$ vanishes. 
Let 
$$y^2=x^3+a_2''x^2+a_4''x$$
be the equation we get after this transformation.
Let $c=a_2''$. Since $Q=(b,b)$ lies on this equation we get that
$$b^2=b^3+cb^2+ba_4'',$$
so $a_4''=\frac{b^2-b^3-cb^2}{b}=(1-b-c)b$, as claimed. 
\end{proof}

\begin{remark}
Jain gives in \cite{Jain10} a (two-variable) family of elliptic curves that is birational to $E_1(2,_>2).$. His family is given in terms of variables $t$ and $q$, and to get from $E_1(2,_>2)$ (given in terms of $b$ and $c$) to his family and vice versa, one uses the following transformations:
\begin{align*}
t := (-2b - c + 1)/(c-1);\\
q := b+1/2c+1/2-2b/(2b + c - 1);\\
b := (t+1)(qt+1)/t^2;\\
c := (t^2-2qt-2)/t^2.
\end{align*}
\end{remark}

\listoftodos[List of todo's]{}
\todo[inline]{remove todolist}

\bibliographystyle{siam}
\bibliography{bibliography1}
\end{document}